\documentclass[11pt]{amsart}
\usepackage{graphicx,amssymb,amsmath, latexsym,cite, amscd,stmaryrd}
\usepackage{amsthm,amsrefs}  
\usepackage{enumerate,lscape}      
\usepackage{float,xcolor}  
\usepackage{fancyhdr, lastpage}     
\usepackage{libertine}             
 \usepackage{url}        
\usepackage{verbatim} 
\usepackage{algorithm}    
\usepackage[noend]{algpseudocode} 
\usepackage{longtable} 
 
\algblock[TryCatchFinally]{try}{endtry}
\algcblock[TryCatchFinally]{TryCatchFinally}{finally}{endtry}  
\algcblockdefx[TryCatchFinally]{TryCatchFinally}{catch}{endtry}
	[1]{\textbf{catch} #1} 
	{\textbf{end try}}

\usepackage[all,cmtip]{xy}

\newcommand{\soc}{{\rm soc}}

\usepackage{algorithm}
\usepackage[noend]{algpseudocode}

\algblock[TryCatchFinally]{try}{endtry}
\algcblock[TryCatchFinally]{TryCatchFinally}{finally}{endtry}
\algcblockdefx[TryCatchFinally]{TryCatchFinally}{catch}{endtry}
	[1]{\textbf{exception:} #1}
	{\textbf{end try}}

\newtheoremstyle{mythm}                   
{6pt}
{6pt}
{\it}
{}
{\bf}
{.}
{.5em}
{}

\newtheoremstyle{mydef}                   
{6pt}
{6pt}
{}
{}
{\bf}
{.}
{.5em}
{}

\newtheoremstyle{myrem}                   
{6pt}
{6pt}
{}
{}
{\bf}
{.}
{.5em}
{}

\theoremstyle{mythm}      
\newtheorem{theorem}{Theorem}[section]
\newtheorem{proposition}[theorem]{Proposition}
\newtheorem{lemma}[theorem]{Lemma}
\newtheorem{corollary}[theorem]{Corollary}

\theoremstyle{mydef}      
\newtheorem{definition}[theorem]{Definition} 
\newtheorem{example}[theorem]{Example}

\theoremstyle{myrem}
\newtheorem{remark}[theorem]{Remark}
\newtheorem{notation}[theorem]{Notation}
\numberwithin{equation}{section}

 \usepackage[a4paper,right=2.65cm, left=2.65cm, top=3.5cm, bottom=3.5cm, marginpar=1.6cm]{geometry}  

\newcounter{ithmcount}
\newenvironment{iprf}{\begin{list}{{\rm
	\alph{ithmcount})}}{\usecounter{ithmcount}\labelwidth-5pt
      \leftmargin0pt \topsep3pt \itemsep1pt \parsep2pt}}{\qedhere\end{list}}

\newenvironment{ithm}{\begin{list}{{\rm \alph{ithmcount})}}{\usecounter{ithmcount}\labelwidth18pt
      \leftmargin18pt \topsep3pt \itemsep1pt \parsep2pt}}{\end{list}}

\allowdisplaybreaks

\reversemarginpar

\newcommand{\Aut}{{\rm Aut}}
\newcommand{\Hom}{{\rm Hom}}
\newcommand{\GL}{{\rm GL}}
\newcommand{\GF}{{\rm GF}}

\newcommand{\diag}{{\rm diag}}
\newcommand{\gnu}{{\rm gnu}}
\newcommand{\gnuFF}{{\rm gnu}_{\Phi}}  
\newcommand{\rk}{{\rm rk}}
\newcommand{\revnew}[1]{{#1}}

\renewcommand{\geq}{\geqslant} 
\renewcommand{\leq}{\leqslant}

\begin{document}

\vspace*{-0.8cm}

\title{The number of groups of  cubefree order}
\subjclass[2020]{20D60,11N45} 

\author[H. Dietrich]{Heiko Dietrich}
\author[D. Jefferies]{David Jefferies}
\address{School of Mathematics, Monash University,  
Clayton VIC 3800, Australia}
\email{heiko.dietrich@monash.edu;\newline david.jefferies@monash.edu, davidjjefferies2@gmail.com}
\thanks{Dietrich was supported by a Universities Australia Grant in the Australia-Germany Joint Research Cooperation Scheme (\emph{The SmallGroups library: new classifications via symbolic computation}). He thanks the Isaac Newton Institute for Mathematical Sciences, Cambridge, for support and hospitality during the programme \emph{Latest trends in algebra and geometry}, where some of the work on this paper was undertaken; this work was supported by EPSRC grant EP/V521929/1. Jefferies was supported by an Australian RTP scholarship. The authors thank Max Horn for helpful discussions and an improvement to Cubefree \cite{cubefreeGAP} that facilitated the extensive cross-checking of our implementation.}

\keywords{finite groups, cubefree groups, group enumeration}
\date{\today}   

\begin{abstract}  Generalising H\"older's  classical group enumeration  for squarefree orders  (1895), we provide an exact formula for the number of isomorphism types of groups of a given cubefree order. After more than $130$ years, this is the first such formula that covers significantly more orders than the squarefree ones ($\approx\!83\%$ versus $\approx\!61\%$ of all integers).  Like  H\"older's formula, ours is \emph{combinatorial}: it can be evaluated from the prime
  factorisation of the order by arithmetic operations and table look-ups, without constructing a single group. The structure of our formula leads to  counting formulas for natural subclasses of cubefree groups,  with applications in computational group theory. It is also explicit enough to yield new asymptotic results.  Blackburn et al.\ (2007) conjectured that the number $\gnu(n)$ of groups of cubefree order $n$ satisfies $\gnu(n)<n^2$.
  We show that $\gnu(n)\leq n^{2+o(1)}$, which improves the bound $\gnu(n)<n^8$ recorded in their survey, and we prove that the exponent $2$ is best possible, that is,  $\gnu(n)\geq n^{2-o(1)}$ for infinitely many cubefree $n$. Lastly, we show that a much stronger form of the conjecture holds for almost every cubefree order, namely,   $\gnu(n)\leq (\log n)^{(\log\log n)^{O(1)}}$.
\end{abstract}

\thispagestyle{empty}

\maketitle

\vspace*{-0.5cm}

\enlargethispage{0.3cm}
\section{Introduction}\label{sec_intro}

\thispagestyle{empty}
\noindent Following  Conway et al.\ \cite{gnu}, we use $\gnu(n)$ (\underline{g}roup \underline{nu}mber) to denote the number of isomorphism types of groups of order $n$.  Determining $\gnu(n)$, and classifying the groups it counts, is a classical theme in group theory; we refer to the book of Blackburn et al.\ \cite{blackburn} for comprehensive background and references. Exact values are known only for restricted classes of orders. If $p$ is a prime, then the numbers of groups of orders $p,p^2,p^3,p^4$ are $1$, $2$, $5$, and $14$ (for $p=2$) or $15$ (for $p>2$), respectively, and  $\gnu(p^5)=2p+2\gcd(p-1,3)+\gcd(p-1,4)+61$ for $p\geq 5$. For larger powers of $p$, asymptotic formulas exist, and Higman's PORC Conjecture claims that, for a fixed $k$, the number of groups of order $p^k$ is a function of $p$ that is polynomial on residue classes. At the other end of the spectrum, H\"older \cite{holder} classified and enumerated the groups of squarefree order: if $n$ is squarefree, then
\begin{equation}\label{eq_holder}
\gnu(n)=\sum\nolimits_{d\mid n}\ \prod\nolimits_{r\mid (n/d)}\frac{r^{w(r)}-1}{r-1},
\end{equation}
where $d$ runs over the divisors of $n$, $r$ over the prime divisors of $n/d$, and $w(r)$ is the number of primes $p\mid d$ with $p\equiv 1\bmod r$; see also \cite[Proposition 21.5]{blackburn}. Murty-Murty \cite{murty} extended these results and  provided a counting formula and asymptotic results for groups all of whose Sylow subgroups are cyclic (C-groups). Their formula applies to every order $n$, but counts only the C-groups of order $n$.

The next natural class of orders beyond the  squarefree ones is that of cubefree integers.  By \cite[(2)]{density}, approximately $83\%$  of all integers are cubefree (and $61\%$ are squarefree). Groups of cubefree order were first studied by Taunt \cite{taunt}. Dietrich-Eick \cite{cf} developed a construction algorithm and Qiao-Li \cite{cf2} provided further structure results. An algorithmic isomorphism test for cubefree groups was presented by Dietrich-Wilson \cite{cfisom}. So far, no exact formula for the number of groups  of arbitrary cubefree order has been known. The best general bound for cubefree $n$ is $\gnu(n)<n^8$ and it is conjectured  that $\gnu(n)<n^2$; see \cite[Section 24.1]{blackburn}. Kumar-Venkataraman  recently  proved that there are at most $n^4$ solvable groups of cubefree order $n$, see \cite[Theorem 1.2]{KV}.

In this paper we establish the first exact counting formula for $\gnu(n)$ with $n$ cubefree, see Section~\ref{sec_intromain}. Our formula is \emph{combinatorial} in the sense of Definition~\ref{def_CF}: it can be evaluated from the prime factorisation of $n$ by integer arithmetic (mainly divisibility conditions) and look-ups in fixed tables, without constructing a single group. This is similar in nature to Higman's  PORC Conjecture for $\gnu(p^k)$.

Restricted to squarefree orders, our formula collapses to H\"older's formula \eqref{eq_holder}, see Remark \ref{rem_redsq}.  A \emph{closed} expression of that shape is unlikely to exist for general cubefree orders, see Remark \ref{rem_closed} and the discussion in \cite[Section~21.4]{blackburn}: already the known count of the groups of order $p^2qr$ in \cite[Theorem~2.1]{sot} has twenty summands, most of them divisibility indicators recording congruences between the primes involved. Some complexity is therefore to be expected. Indeed, our formula is more technical than \eqref{eq_holder}, and evaluating it requires some arithmetic calculations best done by a computer.

Nevertheless, the formula is explicit enough to allow new asymptotic results, established using estimates from analytic number theory. We prove that \cite[Conjecture 21.16]{blackburn} holds up to a factor $n^{o(1)}$, that is, we confirm the conjectured exponent. 

\vspace*{0.05cm}

{\bf Theorem A.} {\it If $n$ is cubefree, then $\gnu(n)\leq  n^{2+o(1)}$.}

\vspace*{0.1cm}

This improves the general bound $\gnu(n)<n^8$ and, asymptotically, the bound of $n^4$ for the number of solvable groups of cubefree order $n$ proved in \cite{KV}. In fact, using results of Dietrich-Wilson~\cite{focs}, we prove that a much stronger form of  \cite[Conjecture~21.16]{blackburn} holds for almost every cubefree~$n$.

{\bf Theorem B.} {\it For almost all cubefree $n$ we have $\gnu(n) \leq (\log n)^{(\log\log n)^{O(1)}}$, and so $\gnu(n) \leq  n^{o(1)}$.}

\vspace*{0.04cm} 

Our results also allow systematic experiments: we have verified that $\mathrm{gnu}(n) \leqslant n$ for every cubefree $n \leqslant 10^8$. This does not persist: the next result shows that $\gnu(n)>n$ for infinitely many $n$, and that the exponent $2$ in Theorem A cannot be improved; see Example \ref{ex_gnultn} for a $16$-digit $n$.

\vspace*{0.04cm}

{\bf Theorem C.} {\it There is an infinite set $\mathcal{N}$ of cubefree integers such that $\gnu(n)\geq n^{2-o(1)}$ for $n\in \mathcal{N}$.}

Our formula does more than just evaluate $\gnu(n)$: each of its outer summations is indexed by a structural invariant of the groups being counted, so truncating a summation counts the number of groups of a given order with  prescribed invariants. In computational group theory, this is exactly the data required to partition these groups into \emph{clusters} for efficient group construction and identification: this underpins many of the algorithms for the  SmallGroups library \cite{smallgroups,mill} and its recent extensions \cite{sot,29,p5,cgroups}. We discuss this in Section \ref{sec_app}.

\enlargethispage{1cm}

\subsection{Counting formulas}\label{sec_intromain} We present exact formulas for the number of groups of cubefree order $n$, see Theorem \ref{thm_mainodd} for odd $n$ and Theorem \ref{thm_maineven} for general $n$; the (easier) odd case is summarised in Theorem D below. Our formulas rely on the following structure results, which we  recall with references in Section~\ref{sec_struct}. Every group $G$ of cubefree order $n$ can be decomposed as $G=A\times L$, where $A$ is trivial or simple, and $L$ is solvable; if $A$ is simple, $A\cong {\rm PSL}_2(p)$ for a prime $p>3$. The Frattini subgroup $\Phi(L)$ of $L$ is nilpotent and its order divides the order of $L_\Phi=L/\Phi(L)$. Thus, $\Phi(L)$ is squarefree, hence cyclic, and so its isomorphism type is determined by its order. The group $L_\Phi$ is Frattini-free, that is, $\Phi(L_\Phi)=1$. It is known that $L_\Phi\cong K\ltimes S$ where $S=\soc(L_{{\Phi}})$ is the socle of $L_\Phi$ and $K\leq \Aut(S)$. The isomorphism type of $L$ is uniquely determined by $L_\Phi$ and the order  $|\Phi(L)|$.

We write $\gnuFF(n,\ell)$ for the number of solvable Frattini-free groups of order $n$ with socle of order~$\ell$. Let $\mathcal{Q}(n)$ be the product of all primes $p$ such that $p^2\mid n$; the divisors of $\mathcal{Q}(n)$ are the candidates for the order of the Frattini subgroup. The possible orders of simple factors are recorded by  $\mathcal{S}(n)=\{p(p^2-1)/2 :  p>3\text{ a prime and }p(p^2-1)/2\mid n\}\cup\{1\}$. The starting point of our counting formulas is the following expression for $\gnu(n)$ with $n$ cubefree; we prove it in  Proposition \ref{propredFF}:
\[\gnu(n) = \sum\nolimits_{a\in \mathcal{S}(n)}\ \sum\nolimits_{d\mid \mathcal{Q}(n/a)}\ \sum\nolimits_{\ell \mid n/(ad)} \gnuFF(n/(ad),\ell).\]

\smallskip

{\bf Odd order formula.}  Theorem D below restates Theorem \ref{thm_mainodd}. The displayed formula uses  notation that will be developed in Section \ref{secOdd}; below the theorem we briefly comment on the notation, provide signposts, and explain the underlying structure. For an in-depth discussion we refer to  that section.

\smallskip

{\bf Theorem D.} {\it The number of isomorphism types of groups of odd cubefree order $n$ is
\[\gnu(n)=\sum_{d\mid \mathcal{Q}(n)}\ \sum_{\ell\mid (n/d)}\ \sum_{L\in\mathcal{A}(n/(\ell d))}\ \sum_{\mathcal{U}}\ \frac{1}{|\Theta(\mathcal{U})|}\sum_{\theta\in\Theta(\mathcal{U})}\ \prod_{q\mid(n/(\ell d))}\big|\mathcal{K}_q(\mathcal{U},L)^{\theta}\big|,\]
where $\mathcal{U}$ runs over the projection tuples in $\prod_{i=1}^m\mathcal{U}(p_i,e_i,L)$ where  $\ell=p_1^{e_1}\ldots p_m^{e_m}$, and $q$ runs over all prime divisors of $n/(\ell d)$; the cardinalities $\big|\mathcal{K}_q(\mathcal{U},L)^{\theta}\big|$ are determined in Lemma \ref{lemFix}.}

\smallskip

The structure of the formula is as follows, where $G$ denotes a group of odd cubefree order $n$; recall that groups of odd order are solvable by the Odd Order Theorem \cite{oddorder}. 
\begin{iprf}
\item[$\bullet$] The two outer sums run over the possible orders $d$  and $\ell$ of the Frattini subgroup $\Phi(G)$ and the socle $S=\soc(G/\Phi(G))$, respectively. If the prime factorisation of $\ell$ is as in the theorem, then $S\cong C_{p_1}^{e_1}\times\ldots\times C_{p_m}^{e_m}$, where each $C_{p_i}$ is a cyclic group of order $p_i$; recall that each $e_i\in\{1,2\}$. 
\item[$\bullet$]  The socle complement $K$ is an abelian subgroup of $\Aut(S)=\prod_{i=1}^m \GL_{e_i}(p_i)$; we note that $K$ is not necessarily abelian for even $n$, which makes the even case more complicated. We  write $\mathcal{A}(n/(\ell d))$ for the set of isomorphism types of abelian groups of order $n/(\ell d)$, each  specified by a list of abelian invariants; the possible complements are among them. In the third sum, $L$ runs over this set and fixes the isomorphism type of the complement.
\item[$\bullet$] The projection of a complement $K\leq \Aut(S)$ into the $i$-th factor defines a subgroup $U_i\leq \GL_{e_i}(p_i)$, and since we fix $K\cong L$, each $U_i$ is a quotient of $L$. A canonical representative for each conjugacy class of such subgroups is stored in  $\mathcal{U}(p_i,e_i,L)$, see \eqref{eq_defUpel}, and represented by a list of numbers, see Remark \ref{rem_savegroup}c). The fourth sum runs over all tuples $\mathcal{U}=(U_1,\ldots,U_m)$ with  $U_i\in\mathcal{U}(p_i,e_i,L)$.  For each such $\mathcal{U}$, the set $\mathcal{K}(\mathcal{U},L)$ denotes the set of all subgroups $K\leq \Aut(S)$ that are isomorphic to $L$ and have projections $U_i\leq \GL_{e_i}(p_i)$ for each $i$. Each $U_i$ of the tuple $\mathcal{U}$ also comes with an automorphism $\theta_{U_i}$ of order dividing $2$; all these automorphisms generate the elementary abelian $2$-group $\Theta(\mathcal{U})$, see  \eqref{def_ThetaU}. Its elements are vectors of signs that record for which $i$ the component $\theta_{U_i}$ acts nontrivially.
\item[$\bullet$] The summands of $\sum_{\mathcal{U}}$ then count the number of $\Theta(\mathcal{U})$-orbits on $\mathcal{K}(\mathcal{U},L)$:  this count is evaluated with the Cauchy-Frobenius Lemma and reduced to calculations with the sets $\mathcal{K}_q(\mathcal{U},L)$ formed by the Sylow $q$-subgroups of the groups in $\mathcal{K}(\mathcal{U},L)$, see \eqref{eq_defKq}, where $q$ runs over the prime divisors of $n/(\ell d)$. The cardinalities of $\mathcal{K}_q(\mathcal{U},L)$ and of the set of fixed points $\mathcal{K}_q(\mathcal{U},L)^\theta$ under $\theta\in\Theta(\mathcal{U})$ are given by formulas that only involve the data representing $\mathcal{U}$, see Lemmas \ref{lemSizes} and~\ref{lemFix}.
\end{iprf}
While the description above is group-theoretic, we explain in the main text how $\gnu(n)$ can be calculated by arithmetic operations and table look-ups alone; cf.\ our implementation and Section~\ref{secComb}.

In Remark \ref{rem_oddsplit} we recall some Erd\H{o}s-P\'alfy \cite{erdos} factorisations $n=ab$ for odd $n$ such that  $\gnu(n)=\gnu(a)\gnu(b)$. This often leads to significant practical improvements, but only for odd $n$.

\smallskip  
 
{\bf General formula.}
Our formula for general cubefree $n$ is  more involved, but it follows the same pattern as the odd case; we therefore do not display it here and refer to Theorem~\ref{thm_maineven} and Section~\ref{sec_maineven} for a discussion. Our theorem genuinely generalises H\"older's result for squarefree orders; see Remark~\ref{rem_redsq}.

While our formula for $\gnu(n)$ is not \emph{closed} as in  \eqref{eq_holder}, it is exact, combinatorial, and highly structured. In particular, it can easily be evaluated by a straightforward calculation on a computer, and we explain in  Section \ref{secComp} how this can  be done  more efficiently than a naive summand-by-summand calculation. We have implemented our formula for the computer algebra system GAP \cite{gap}, and our implementation\label{pagelink}\footnote{See {\scriptsize\url{https://github.com/heikodietrich/NumberCubefreeGroups}} for the pre-release code.} will be included in the next release of the package Cubefree \cite{cubefreeGAP}.  We stress that our proofs are independent of any computation; the implementation is used only for evaluation and  cross-checking. For example, enumerating the  $22{,}855{,}136$ groups of order $2^2.3^2.5^2.11^2.41^2.67^2$ and the  $92{,}432{,}340$ groups of order $5^2.7^2.13^2.17^2.19^2.23^2.31^2.37^2.43^2.47^2.53^2.59^2.67^2$ takes less than a second each. Our extensive cross-checking against existing data gives additional  confidence that the technical case distinctions in our proofs are all correct, see Section~\ref{secXC}.

\enlargethispage{0.7cm}

{\bf Structure of this paper.} In Section \ref{sec_prelim} we provide the required background details, in particular on cubefree groups. We derive the formula for odd orders (Theorem D) in Section \ref{secOdd}, and we consider even orders in Section \ref{secEven}.  In Section \ref{sec_asym}, we prove Theorems~A, B, and~C. In Section \ref{secComp}, we comment on our implementation and how we have cross-checked its validity  against existing data. In Section~\ref{sec_app}, we discuss applications to group construction and identification. A few technical results are deferred to Appendix \ref{secKH}; these are required to prove that our formula for the even case is combinatorial. A summary of the notation used in this paper is collected in Appendix~\ref{secNot}.

\section{Preliminary results}\label{sec_prelim}
\noindent 
The following two subsections repeat some of the results and definitions mentioned in the introduction; we keep them here for completeness and easy referencing, see also Appendix~\ref{secNot}.

All groups are finite, and integers $p$ and $q$ usually denote primes. If $n\geq 1$ is an integer, then $C_n$ denotes the cyclic group of order $n$. If $G$ is a group, then $G_p$ denotes a Sylow $p$-subgroup and $\rk(G)$ denotes the size of a smallest possible generating set. We write a split extension $G$ of $N\unlhd G$ with complement $K$ as  $G=K\ltimes N$. Generalising the usual definition of a Kronecker delta, for a logical statement $P$ we write $\Delta_P=1$ if $P$ holds, and $\Delta_P=0$ otherwise. For a prime power $p^k$ we denote by $\GF(p^k)$ the finite field with $p^k$ elements, and denote by $\GF(p^k)^\times$ its (cyclic) multiplicative group. Superscripts $\pm$ will later often denote data associated with $\pm 1$-eigenspaces, such as dimensions.

\subsection{The structure of cubefree groups}\label{sec_struct}
We recall the relevant structure results for cubefree groups from \cite{cf,cfisom,cf2}. Let $G$ be a nontrivial group of cubefree order. Then $G=A\times L$ where $A=1$ or $A={\rm PSL}_2(p)$ for a prime $p>3$ with $p\pm 1$ cubefree, and $L$ is solvable. Moreover, the Frattini subgroup $\Phi(L)$ of $L$ is nilpotent and squarefree (hence cyclic) and $|\Phi(L)|$ divides the order of the Frattini quotient $L_\Phi=L/\Phi(L)$. The group $L_\Phi$ is Frattini-free (that is, it has a trivial Frattini subgroup) and can be decomposed as $L_\Phi=K\ltimes (B\times C)$ where $\soc(L_\Phi)=B\times C$ is the socle of $L_\Phi$ and $K\leq \Aut(B\times C)$; here $B=C_{p_1}\times\ldots\times C_{p_s}$ and $C=C_{p_{s+1}}^2\times\ldots\times C_{p_m}^2$ for pairwise distinct primes $p_1,\ldots,p_m$. Work of Gasch\"utz implies that two such solvable Frattini-free groups $K\ltimes (B\times C)$ and $\tilde K\ltimes (B\times C)$ with $K,\tilde K\leq\Aut(B\times C)$ are isomorphic if and only if $K$ and $\tilde K$ are conjugate in $\Aut(B\times C)$, see \cite[Theorem~9]{cf}. Moreover, we recall that  the isomorphism type of $L$ is determined by $L_\Phi$ and $|\Phi(L)|$ in the following sense: if $F$ is a solvable Frattini-free group and $M$ is a cyclic group such that $|F||M|$ is cubefree and $|M|$ divides $|F|$, then there is a unique isomorphism type of extension $E$ of $F$ by $M$ such that $\Phi(E)\cong M$ and $E/\Phi(E)\cong F$, see \cite[Theorem 11]{cf}.

\subsection{Counting formulas: a first reduction}
Throughout, we use the following notation.

\begin{definition}\label{def_setup} Let $n\geq 1$ be an integer.
  \begin{ithm}
  \item  Write $\gnu(n)$ for the number of isomorphism classes of groups of order $n$.
  \item Write $\gnuFF(n)$ for the number of isomorphism classes of solvable Frattini-free groups of order~$n$.
  \item Write $\gnuFF(n,\ell)$ for the number of isomorphism classes of solvable Frattini-free groups of order $n$ with socle order $\ell$.  
  \item Write $\mathcal{Q}(n)$ for the product of all primes $p$ such that $p^2\mid n$.
  \item Write $\mathcal{S}(n)=\{p(p^2-1)/2 :  p>3\text{ a prime and }p(p^2-1)/2\mid n\}\cup\{1\}$.
    \item We write $\mathcal{A}(n)$ for the set of isomorphism types of abelian groups of order $n$.
  \end{ithm}
\end{definition}

Since $|{\rm PSL}_2(p)|=p(p^2-1)/2$, the set $\mathcal{S}(n)$ records the orders of the possible simple direct factors of a group of cubefree order $n$. Moreover, if $n$ is cubefree, then  $|\mathcal{A}(n)|=2^{a(n)}$ where $a(n)$ is the number of prime divisors $q$ of $n$ with $q^2\mid n$; in particular, each group in $\mathcal{A}(n)$ is uniquely determined by its list of abelian invariants, for example, $[2,2,9,25]$ corresponds to the group $C_2\times C_2\times C_9\times C_{25}$.

The following result reduces the computation of $\gnu$ to that of $\gnuFF$.

\begin{proposition}\label{propredFF}
  Let $n\geq 1$ be a cubefree number; the following hold.
\begin{ithm}  
\item The number $\gnu(n)$ is given by $\gnu(n) = \sum\nolimits_{a\in \mathcal{S}(n)} \sum\nolimits_{d\mid \mathcal{Q}(n/a)} \gnuFF(n/(ad))$.
\item The number $\gnuFF(n)$ is given by  $\gnuFF(n)= \sum\nolimits_{\ell \mid n} \gnuFF(n,\ell)$.
\item Let $\ell\mid n$ with prime factorisation $\ell=p_1^{e_1}\ldots  p_m^{e_m}$; recall that each $e_i\in\{1,2\}$. Then \[S=C_{p_1}^{e_1}\times \ldots \times C_{p_m}^{e_m}\] is the unique isomorphism type of a socle of order $\ell$, with automorphism group
  \[ \Aut(S)=\GL_{e_1}(p_1)\times\ldots\times \GL_{e_m}(p_m).\]
Now $\gnuFF(n,\ell)$ is
the number of $\Aut(S)$-classes of solvable subgroups of $\Aut(S)$ of order $n/\ell$.
\end{ithm}
\end{proposition}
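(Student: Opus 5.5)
Part a) follows from the structure results of Section~\ref{sec_struct}, and the plan is to set up a bijection between isomorphism types of groups of order $n$ and triples $(a,d,F)$. Here $a\in\mathcal{S}(n)$, $d\mid\mathcal{Q}(n/a)$, and $F$ is an isomorphism type of solvable Frattini-free group of order $n/(ad)$.

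Given $G$ of cubefree order $n$, write $G=A\times L$ with $A=1$ or $A\cong{\rm PSL}_2(p)$, and $L$ solvable. Then $a=|A|\in\mathcal{S}(n)$. Set $d=|\Phi(L)|$ and $F=L/\Phi(L)$.

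To see that $d$ has the right form, we use that $\Phi(L)$ is squarefree with $d$ dividing $|F|=n/(ad)$. Hence each prime $p\mid d$ satisfies $p^2\mid n/a$, so $d\mid\mathcal{Q}(n/a)$. Conversely, since $d$ is squarefree, every $d\mid\mathcal{Q}(n/a)$ divides $n/(ad)$.

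We must also check that the triple is an invariant of $G$.
\begin{itemize}
\item The factor $A$ is determined up to isomorphism: if $A\neq1$ it is the solvable radical complement, namely the unique nonabelian composition factor. Equivalently, $L$ is the solvable radical of $G$, so $L$ is determined by $G$.
\item Consequently $a$, $d$ and $F$ are isomorphism invariants.
\end{itemize}

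Conversely, given a triple $(a,d,F)$, we build a group as follows.
\begin{itemize}
\item Cite \cite[Theorem 11]{cf}: since $|F|d=n/a$ is cubefree and $d\mid |F|$, there is a unique extension $L$ with $\Phi(L)\cong C_d$ and $L/\Phi(L)\cong F$. This $L$ is solvable.
\item Set $G=A\times L$, where $A$ is the unique ${\rm PSL}_2(p)$ of order $a$. The prime $p$ is determined by $a$, because $p(p^2-1)/2$ is increasing in $p$.
\end{itemize}
The two maps are mutually inverse on isomorphism classes, and summing over the triples gives a).

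Part b) is immediate, by partitioning the solvable Frattini-free groups of order $n$ according to the order $\ell$ of the socle. This order divides $n$.

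For c), take a solvable Frattini-free group of order $n$ with socle of order $\ell$. By Section~\ref{sec_struct}, its socle is $B\times C$, a product of $C_p$'s and $C_p^2$'s for distinct primes. Since $n$ is cubefree, the $p$-part of the socle is $C_p^{e}$ with $p^e\,\|\,\ell$, so $S$ is determined by $\ell$. Its automorphism group is the stated product, because the Sylow subgroups are characteristic and $\Aut(C_p^e)=\GL_e(p)$.

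The group then has the form $K\ltimes S$ with $K\leq\Aut(S)$ solvable of order $n/\ell$. By Gasch\"utz \cite[Theorem~9]{cf}, isomorphism of such groups corresponds to conjugacy of the complements in $\Aut(S)$.

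The main obstacle is surjectivity: not every solvable $K\leq\Aut(S)$ of order $n/\ell$ obviously yields a Frattini-free group whose socle is exactly $S$. To handle this, we use that $|K|$ and $|S|$ combine to the cubefree order $n$. It follows that $p\nmid|K_p|$-parts acting on $C_p^{e}$ in a way that makes $K\ltimes S$ fail these properties is impossible: the $p$-part of $K$ has order at most $p^{2-e}$, so for $e=2$ the group $K$ is a $p'$-group on that factor. Where $p$ divides $|K|$ (the case $e=1$, $K$ acting on $C_p$), $\GL_1(p)$ is a $p'$-group, so $K$ acts coprimely on each factor of $S$.

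Given coprime action, Maschke's theorem makes $S$ a completely reducible $K$-module. Then $\Phi(K\ltimes S)=1$, using that $K$ acts faithfully and the standard characterisation of Frattini-free groups with complemented abelian socle. Moreover, $C_{K\ltimes S}(S)=S$ by faithfulness, so the Fitting subgroup is $S$ and the socle is exactly $S$. This yields the bijection in c).
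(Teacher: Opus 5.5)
Your proof is correct. For a) and b) it is the paper's argument: the decomposition $G=A\times L$, \cite[Theorem~11]{cf} for the Frattini extension, and partitioning by socle order. You also add two checks the paper leaves implicit: that $L$ is the solvable radical of $G$, so $(a,d,F)$ is an isomorphism invariant, and that every $d\mid\mathcal{Q}(n/a)$ divides $n/(ad)$, so \cite[Theorem~11]{cf} applies.

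The genuine difference is in c). The paper takes the whole correspondence from \cite[Theorem~9]{cf}, including the fact that every solvable $K\leq\Aut(S)$ of order $n/\ell$ actually occurs. You use that citation only for ``isomorphic if and only if conjugate'' and prove surjectivity yourself. This makes visible where cubefreeness enters: each $\pi_i(K)\leq\GL_{e_i}(p_i)$ is a $p_i'$-group (for $e_i=2$ because $p_i\nmid n/\ell$, for $e_i=1$ because $|\GL_1(p_i)|=p_i-1$). Hence $S$ is a faithful semisimple $K$-module by Maschke. Your sentence beginning ``It follows that $p\nmid|K_p|$-parts'' is garbled and should just say this.

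Two steps deserve a line each. First, for $G=K\ltimes S$ and an irreducible summand $N$ of $S=N\oplus N'$, the proper subgroup $KN'$ supplements $N$, so $N\not\leq\Phi(G)$. Every nontrivial normal subgroup of $G$ inside $S$ contains such an $N$, so $\Phi(G)\cap S=1$. Hence $[\Phi(G),S]=1$, so $\Phi(G)\leq C_G(S)=S$ and therefore $\Phi(G)=1$. Second, a minimal normal subgroup $M$ of $G$ with $M\cap S=1$ would satisfy $[M,S]=1$, forcing $M\leq C_G(S)=S$, a contradiction. Together with $S\leq\soc(G)$ (from semisimplicity) this gives $\soc(G)=S$.

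Your inference ``$C_G(S)=S$, so the Fitting subgroup is $S$'' is not valid without semisimplicity: the dihedral group $C_2\ltimes C_2^2$ of order $8$ is a counterexample.

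The paper's route is shorter. Yours is self-contained and shows that, in the cubefree case, no condition on $K\leq\Aut(S)$ beyond solvability and order is needed.
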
 
\begin{proof}
  This follows from the  results in Section \ref{sec_struct}; for completeness, we recall the argument here.
  \begin{iprf}
  \item   Let $G$ be a cubefree group of order $n$. It follows from \cite[Theorem 9]{cf} or \cite[Theorem 1.1]{cf2} that $G=A\times L$ where $A$ is trivial or simple of order $|A|\in \mathcal{S}(n)$, and $L$ is solvable; in particular, the isomorphism type of $A$ is uniquely determined by $|A|$. Conversely, if $a\in \mathcal{S}(n)$, then there is, up to isomorphism, a unique cubefree simple group $A$ of order $a$, and if $L$ is a solvable cubefree group of order $n/a$, then $A\times L$ is a cubefree group of order $n$. This explains the sum over $\mathcal{S}(n)$. Thus, for a fixed $a\in \mathcal{S}(n)$, it remains to count the isomorphism types of solvable cubefree groups of order $n/a$. Let $H$ be such a group.   It follows from \cite[Section 3]{cf} that $d=|\Phi(H)|$ is a squarefree divisor of $\mathcal{Q}(n/a)$, the group $\Phi(H)\cong C_d$ is uniquely determined by $d$, and  $H/\Phi(H)$ is a Frattini-free group of order $n/(ad)$. Conversely, if $d\mid \mathcal{Q}(n/a)$ and $F$ is a solvable Frattini-free group of order $n/(ad)$, then there is, up to isomorphism, a unique group $H$ of order $n/a$ such that $\Phi(H)\cong C_d$ and $H/\Phi(H)\cong F$; see \cite[Theorem~11]{cf}. This explains the sum over the divisors of $\mathcal{Q}(n/a)$. The claim follows.
  \item A Frattini-free solvable group $L$ of order $n$ has the form $L=K\ltimes \soc(L)$ and $|\soc(L)|\mid n$; moreover, the isomorphism type of $\soc(L)$ is uniquely determined by $|\soc(L)|$,  see \cite[Theorem 9]{cf}.
  \item The structure of $S$ follows because it is a direct product of simple groups. The last claim follows from  \cite[Theorem~9]{cf}: Up to isomorphism, the solvable Frattini-free groups of order $n$ with socle order $\ell$ are exactly  $K\ltimes S$ where $K\leq \Aut(S)$ is solvable of order $n/\ell$;  two such groups $K\ltimes S$ and $\tilde K\ltimes S$ are isomorphic if and only if $K$ and $\tilde K$ are conjugate in $\Aut(S)$.
  \end{iprf}
\end{proof}    

We are interested in counting formulas that can be evaluated without the need to carry out computations in groups; we call such formulas combinatorial.

\begin{definition}\label{def_CF}
  A formula for $\gnu(n)$ is \emph{combinatorial} if it can be evaluated from the prime factorisation $n=p_1^{e_1}\cdots p_m^{e_m}$ using only elementary arithmetic with integers formed from $p_1,\ldots,p_m$, divisibility tests $a\mid n$ and $b\mid p_i\pm1$, where $a$ is such an integer and $b$  a prime power dividing $n$, and look-ups in  tables independent of $n$. In particular, it is not required to factor any integer other than $n$, nor to construct any group, group element, or matrix.
\end{definition}

\section{Groups of odd cubefree order}\label{secOdd}

\noindent By Proposition \ref{propredFF}, the enumeration of the groups of a given cubefree order reduces to the numbers $\gnuFF(n,\ell)$. The aim of this section is to present the first formula for these numbers when  $n$ is odd, see Proposition~\ref{propRed} and our main result Theorem \ref{thm_mainodd}. Thus, throughout this section, we consider odd order groups;  by the Odd Order Theorem, these groups are solvable.
 
\pagebreak

\begin{notation}\label{notSetup} We fix the following notation in this section.
\begin{ithm}
\item The integer $n\geq 1$ is odd and cubefree, and $\ell$ is a positive divisor of $n$ with prime factorisation 
  \[\ell= p_1^{e_1}\ldots p_m^{e_m};\]
recall that each $e_i\in\{1,2\}$. For $\ell=1$ we have $m=0$, and  empty products equal $1$.
\item We write $S=C_{p_1}^{e_1}\times\ldots\times C_{p_m}^{e_m}$ for the unique isomorphism type of socle of order $\ell$, with automorphism group \[\Aut(S)=\GL_{e_1}(p_1)\times\ldots\times \GL_{e_m}(p_m).\] We call the $i$-th direct factor  of $\Aut(S)$ the $i$-th \emph{column} of $\Aut(S)$, and we write $\pi_i\colon \Aut(S)\to\GL_{e_i}(p_i)$ for the projection onto it. For $K\leq \Aut(S)$ we call $\pi_i(K)$ the \emph{projection of $K$ into the $i$-th column} and abbreviate $K_i=\pi_i(K)$. This notation is slightly ambiguous, but it will be clear from the context whether a subscript indicates a projection $K_i$ or a Sylow subgroup $K_q$.
\item We write $\nu=n/\ell$, and for a prime $q$ we denote by $v_q$ the $q$-adic valuation, that is, $q^{v_q(\nu)}$ is the largest $q$-power dividing $\nu$.
\item We define $\mathcal K=\{K\leq \Aut(S) : K\text{ solvable with } |K|=\nu\}$, so $\gnuFF(n,\ell)=|\mathcal K/\Aut(S)|$  is the number of $\Aut(S)$-classes of subgroups in $\mathcal K$; the groups that are counted are  $K\ltimes S$ with $K\in\mathcal K$. In this section the solvability requirement is automatically satisfied since $\nu$ is odd.
\end{ithm}
\end{notation}

Let $K\in \mathcal{K}$. Each projection $K_i=\pi_i(K)$ is a subgroup of $\GL_{e_i}(p_i)$ and has order dividing $\nu$. Since $n=\nu\ell$ is odd and cubefree, it follows that $|\pi_i(K)|$ is odd, cubefree, and coprime to $p_i$. Moreover, $K$ is a subdirect product of $K_1,\ldots,K_m$, that is, a subgroup of the direct product $K_1\times\ldots\times K_m$ with surjective projections.

Let $K,\tilde K\in\mathcal{K}$ and recall from Section \ref{sec_struct} that $K\ltimes S$ and $\tilde K\ltimes S$ are isomorphic if and only if $K$ and $\tilde K$ are conjugate in $\Aut(S)$. Thus, it is sufficient to assume that each projection $K_i$ is a \emph{canonical} $\GL_{e_i}(p_i)$-conjugacy class representative, in particular, each $K_i=\tilde K_i$, and it remains to decide conjugacy of $K$ and $\tilde K$ in $N_{\GL_{e_1}(p_1)}(K_1)\times\ldots \times N_{\GL_{e_m}(p_m)}(K_m)$. We discuss these canonical projections and their normalisers in the following section.

\subsection{The projections and their normalisers}\label{secProj}

\noindent Every projection $K_i$ of a $K\in\mathcal K$ is a subgroup of $\GL_{e_i}(p_i)$ of odd cubefree order coprime to $p_i$. We now classify these subgroups up to conjugacy, fix a canonical representative in each class, and determine the action that the normaliser of such a representative induces on it. Throughout this section, $p$ is an odd prime.

\begin{definition}\label{defCan}
Let $p$ be an odd prime.
\begin{ithm}
\item For $b\mid p-1$ let $C(p,b)$ be the unique subgroup of order $b$ of the cyclic group $\GL_1(p)$.
\item Let $D(p)=\{\diag(x,y) : x,y\in\GL_1(p)\}$ be the subgroup of diagonal matrices in  $\GL_2(p)$. Let $J=\left(\begin{smallmatrix}0&1\\1&0\end{smallmatrix}\right)$, so  $M(p)=\langle D(p),J\rangle$ is the group of monomial matrices. Conjugation by $J$ induces an automorphism of $D(p)$ of order $2$, which we call the \emph{swap} and also denote by $J$. A subgroup $U\leq D(p)$ is scalar if $U\leq Z(\GL_2(p))$.
\item Fix a generator $\tau$ of ${\rm GF}(p^2)^\times$, so $\{1,\tau\}$ is a $\GF(p)$-basis of $\GF(p^2)$ and we can identify ${\rm GF}(p^2)$ with ${\rm GF}(p)^2$. Multiplication by the elements of ${\rm GF}(p^2)^\times$ then embeds ${\rm GF}(p^2)^\times$ into $\GL_2(p)$, and the image $\Sigma(p)$ is the \emph{standard Singer cycle}; it is cyclic of order $p^2-1$. For $b\mid p^2-1$ we denote by $\Sigma(p,b)$ the unique subgroup of $\Sigma(p)$ of order $b$.
\end{ithm}
\end{definition}

 We refer to \cite[Satz II.7.3]{hupp} for more details on Singer cycles.  The next result is  from \cite[Lemma 5.2]{cfisom}, which heavily relies on results in \cite{gl2}.

\begin{lemma}\label{lemProj}
Let $p$ be an odd prime and $e\in\{1,2\}$. If $U\leq\GL_e(p)$ has odd cubefree order $b$ coprime to $p$, then $U$ is abelian, and the following hold.
\begin{ithm}
\item If $e=1$, then $b\mid p-1$ and $U=C(p,b)$ is the unique subgroup of $\GL_1(p)$ of order $b$.
\item If $e=2$ and $U$ is irreducible, then $b\mid p^2-1$ and $b\nmid p-1$, and $U$ is conjugate to $\Sigma(p,b)$. Conversely, $\Sigma(p,b)$ is irreducible for every $b\mid p^2-1$ with $b\nmid p-1$. For each such $b$ there is exactly one conjugacy class of irreducible subgroups of order $b$.
\item If $e=2$ and $U$ is reducible, then $U$ is conjugate to a subgroup of $D(p)$. Two subgroups $V,\tilde V\leq D(p)$ are conjugate in $\GL_2(p)$ if and only if $\tilde V\in\{V,V^J\}$.
\end{ithm}
\end{lemma}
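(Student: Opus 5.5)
We first show that $U$ is abelian, and then handle the three cases $e=1$, irreducible $e=2$, and reducible $e=2$ separately.

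\emph{Abelianness.} Since $b$ is coprime to $p$, Maschke's theorem makes $\GF(p)^e$ a completely reducible $U$-module. If $e=1$, then $U\leq\GL_1(p)$ is cyclic. If $e=2$, the order of $\GL_2(p)$ is $p(p-1)^2(p+1)$, so $b\mid (p-1)^2(p+1)$.
\begin{itemize}
\item If $U$ is reducible, complete reducibility gives a decomposition into two $1$-dimensional submodules. Thus $U$ is conjugate into $D(p)$, which is abelian. This already proves the first claim of c).
\item If $U$ is irreducible, we use that $U$ has odd order. Hence $U$ is solvable and contains no involution, so it cannot be imprimitive: an imprimitive group of degree $2$ maps onto a transitive subgroup of $S_2$, which would require even order. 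So $U$ is primitive.
\item Primitive solvable subgroups of $\GL_2(p)$ lie in the normaliser of a Singer cycle, $N=\Sigma(p)\rtimes C_2$, or have a normal extraspecial or quaternion $2$-subgroup. The latter is excluded by the odd order. Since $|N:\Sigma(p)|=2$ and $|U|$ is odd, $U\leq\Sigma(p)$ up to conjugacy. In particular $U$ is cyclic.
\end{itemize}
Alternatively, to avoid the primitive-group classification, we can cite Dickson's classification of subgroups of $\GL_2(p)$ of order coprime to $p$, as in the cited results of \cite{gl2}.

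\emph{Part a).} This is immediate, since $\GL_1(p)\cong C_{p-1}$ is cyclic and so has a unique subgroup of each order dividing $p-1$.

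\emph{Part b).} Suppose $U$ is irreducible. By the above, $U$ is conjugate to a subgroup of $\Sigma(p)$. Since $\Sigma(p)$ is cyclic, it has a unique subgroup of each order, so $U$ is conjugate to $\Sigma(p,b)$ and $b\mid p^2-1$. If $b\mid p-1$, then $\Sigma(p,b)\leq\GF(p)^\times$ consists of scalars, which are reducible. Conversely, if $b\nmid p-1$, a generator of $\Sigma(p,b)$ is multiplication by some $\alpha\in\GF(p^2)\setminus\GF(p)$. Its minimal polynomial has degree $2$ and is irreducible over $\GF(p)$, so there is no invariant line. This gives uniqueness of the conjugacy class of irreducible subgroups of order $b$.

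\emph{Part c), second claim.} We must show that $V,\tilde V\leq D(p)$ are $\GL_2(p)$-conjugate only if $\tilde V\in\{V,V^J\}$; this is the main obstacle. If $V$ is scalar, then conjugacy forces $\tilde V=V$, since central subgroups are fixed under conjugation and the scalar subgroup of a given order is unique. So assume $V$ is non-scalar and $g^{-1}Vg=\tilde V$.
\begin{itemize}
\item Because $V$ is diagonal and non-scalar, its isotypic decomposition is exactly the two coordinate lines. Their characters are distinct, since otherwise $V$ would be scalar.
\item The element $g$ maps the isotypic components of $\tilde V$ to those of $V$. The only lines that are isotypic components for both subgroups are the coordinate lines, so $g$ permutes the coordinate lines.
\item Hence $g\in M(p)=D(p)\langle J\rangle$. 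Since $D(p)$ is abelian, conjugation by the $D(p)$-part acts trivially, so $\tilde V\in\{V,V^J\}$.
\end{itemize}
The reverse direction is trivial, because $J\in\GL_2(p)$.
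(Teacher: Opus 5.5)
Your proof is correct. For the conjugacy claim in c) you argue as the paper does. A non-scalar diagonal $V$ has the two coordinate lines as its only invariant lines: these are your isotypic components and the paper's eigenspaces of some $\diag(x,y)$ with $x\neq y$. So a conjugating element permutes them and lies in $M(p)$, and since $D(p)$ is abelian this gives $\tilde V\in\{V,V^J\}$. You should state explicitly that $\tilde V$ is also non-scalar. This is immediate, because a conjugate of a non-scalar group is non-scalar ($Z(\GL_2(p))$ is normal), but your step ``isotypic components for both subgroups'' relies on it.

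You genuinely differ on everything else. The paper does not reprove abelianness, a), b), or the first claim of c); it quotes them from \cite[Lemma 5.2]{cfisom}, which rests on the subgroup classification in \cite{gl2}. You derive them instead from three ingredients:
\begin{itemize}
\item Maschke's theorem, so a reducible $U$ is conjugate into $D(p)$;
\item the observation that an irreducible group of odd order cannot be imprimitive;
\item the structure of primitive solvable subgroups of $\GL_2(p)$, which together with $|N_{\GL_2(p)}(\Sigma(p)):\Sigma(p)|=2$ puts $U$ inside a conjugate of $\Sigma(p)$.
\end{itemize}
This buys transparency, and it shows that the cubefree hypothesis plays no role in this lemma: odd order coprime to $p$ suffices. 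The price is that you import the Odd Order Theorem for solvability, the classification of primitive solvable linear groups of degree $2$, and conjugacy of Singer cycles. The paper's citation is shorter but defers all the work to the literature.

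Your Dickson alternative is the lighter of your two options. The image of $U$ in $\mathrm{PGL}_2(p)$ has odd order coprime to $p$, hence is cyclic, so $U$ is abelian. Schur's lemma then places an irreducible abelian $U$ in a Singer cycle, with no appeal to Feit--Thompson.
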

\begin{proof}
With the exception of the conjugacy claim in c), this follows directly from \cite[Lemma 5.2]{cfisom}. Let $V,\tilde V\leq D(p)$ and suppose $\tilde V=V^g$ for some $g\in\GL_2(p)$. If $V$ is scalar, then $\tilde V=V=V^J$. Otherwise $V$ contains some $\diag(x,y)$ with $x\neq y$, and the eigenspaces of $\diag(x,y)$ are the only $V$-invariant lines;  the same holds for $\tilde V$. As $g$ maps the $V$-eigenspaces to $\tilde V$-eigenspaces, we can assume that $g\in M(p)$. Since $D(p)$ is abelian, it follows that $\tilde V\in\{V,V^J\}$. The converse is clear.
\end{proof}

Lemma \ref{lemProj} allows us to make the following definition.
\begin{definition}\label{def_Upc}  
For an odd prime $p$ and $e\in\{1,2\}$, we define a set $\mathcal{U}(p,e)$ of \emph{canonical subgroup representatives}, containing exactly one subgroup from each $\GL_e(p)$-class of subgroups of $\GL_e(p)$ of odd cubefree order coprime to $p$, namely:
\begin{ithm}
\item[$\bullet$] $\mathcal{U}(p,1)$ consists of the groups $C(p,b)$ with $b\mid p-1$ odd and cubefree;
\item[$\bullet$]  $\mathcal{U}(p,2)$ consists of the groups $\Sigma(p,b)$ with $b\mid p^2-1$ odd, cubefree, and $b\nmid p-1$, together with one subgroup from each $\langle J\rangle$-orbit on the set of subgroups of $D(p)$ of odd cubefree order, see Remark \ref{rem_savegroup}a).
\end{ithm}
\end{definition}

We now consider the conjugation action of the normalisers.

\begin{lemma}\label{lemT}
Let $p$ be an odd prime and $e\in\{1,2\}$. For $U\in\mathcal{U}(p,e)$ define 
\[\Theta(U)=N_{\GL_e(p)}(U)/C_{\GL_e(p)}(U),\]
regarded as a subgroup of $\Aut(U)$. Then $|\Theta(U)|\leq 2$ and  one of the following holds.
\begin{ithm}
\item  $e=1$, or $e=2$ and $U\leq D(p)$ with either $U$ scalar or $U^J\neq U$: here $\Theta(U)=1$.
\item $e=2$ and $U\leq D(p)$ is non-scalar with $U^J=U$: here $\Theta(U)=\langle\theta_U\rangle$ has order $2$, where $\theta_U=J|_U$.
\item $e=2$ and $U=\Sigma(p,b)$ is irreducible: here  $\Theta(U)=\langle\theta_U\rangle$ has order $2$, where $\theta_U(u)=u^p$ for $u\in U$. For every nontrivial Sylow $q$-subgroup $U_q$,  either the restriction of $\theta_U$ to $U_q$ is trivial (if $|U_q|$ divides $p-1$) or is inversion (if $|U_q|$ divides $p+1$).
\end{ithm}
\end{lemma}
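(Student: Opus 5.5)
We treat the three cases of the lemma separately. In each case we determine the normaliser $N=N_{\GL_e(p)}(U)$ and the centraliser $C=C_{\GL_e(p)}(U)$ explicitly and then read off the induced group of automorphisms $N/C\leq\Aut(U)$. Since $|U|$ is odd and coprime to $p$, $U$ acts completely reducibly on $\GF(p)^e$ by Maschke's theorem, which we use freely.

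\emph{Case $e=1$.} Here $\GL_1(p)$ is abelian, so $N=C=\GL_1(p)$ and hence $\Theta(U)=1$.

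\emph{Case $e=2$, $U\leq D(p)$.} If $U$ is scalar, then $C=\GL_2(p)=N$ and $\Theta(U)=1$. Otherwise $U$ contains some $\diag(x,y)$ with $x\neq y$. By the argument in the proof of Lemma~\ref{lemProj}c), the only $U$-invariant lines are the two coordinate axes. Any $g\in N$ permutes the $U$-invariant lines, so $g\in M(p)=D(p)\cup D(p)J$. Moreover $D(p)\leq C$, since $D(p)$ is abelian. Conversely, $C\leq D(p)$, because an element centralising $\diag(x,y)$ with $x\neq y$ preserves both of its eigenspaces and is therefore diagonal. Hence $N/C$ is trivial unless $J$ normalises $U$, that is, unless $U^J=U$; in that case $N/C=\langle J\rangle C/C$ has order $2$ (note $J\notin C$, since $U$ is non-scalar). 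This gives a) and b), with $\theta_U=J|_U$.

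\emph{Case $e=2$, $U=\Sigma(p,b)$ irreducible.} By Schur's lemma, $E=\mathrm{End}_U(\GF(p)^2)$ is a division algebra, hence a field by Wedderburn's little theorem. It contains the field $\GF(p)[U]$, which is $\GF(p^2)$ by irreducibility. Since $E\subseteq M_2(\GF(p))$ and $\GF(p)^2$ is a faithful $E$-module, we get $E=\GF(p^2)$, so $C=\Sigma(p)$. The group $N/C$ then embeds in the Galois group $\mathrm{Gal}(\GF(p^2)/\GF(p))$ acting on $\GF(p)[U]=\GF(p^2)$, which has order $2$. The Frobenius map $\phi\colon z\mapsto z^p$ is $\GF(p)$-linear on $\GF(p)^2=\GF(p^2)$, so it is an element of $\GL_2(p)$. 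It normalises $\Sigma(p)$, inducing $u\mapsto u^p$ there, and hence normalises its unique subgroup $\Sigma(p,b)$. It acts nontrivially on $U$ because $b\nmid p-1$. So $\Theta(U)=\langle\theta_U\rangle$ with $\theta_U(u)=u^p$.

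For the Sylow statement, let $U_q\neq1$ have order $q^k$ with $q$ odd. Since $q$ is odd, $q$ divides exactly one of $p-1$ and $p+1$, as these differ by $2$. Hence $q^k$ divides $p-1$ or $p+1$. If $q^k\mid p-1$ then $u^p=u$ for $u\in U_q$. If $q^k\mid p+1$ then $u^p=u^{-1}$.

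The main obstacle is the irreducible case, specifically showing that $N/C$ has order at most $2$. The plan handles this via Schur's lemma and the Galois group of $\GF(p^2)/\GF(p)$. Finally, $|\Theta(U)|\leq2$ in all cases by the above.
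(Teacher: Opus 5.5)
Your proof is correct. The reducible case $U\leq D(p)$ is the same as in the paper. The invariant lines force $N_{\GL_2(p)}(U)\leq M(p)$, the centraliser of a non-scalar diagonal matrix is $D(p)$, and $J$ gives the element of order $2$ exactly when $U^J=U$.

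The difference is in the irreducible case. The paper does not argue directly there: it invokes (the proof of) Huppert's Satz II.7.3 to obtain $N_{\GL_2(p)}(U)=N_{\GL_2(p)}(\Sigma(p))=\langle\Sigma(p),\theta\rangle$ from $b\nmid p-1$, and then reads off $\Theta(U)$. You compute everything from scratch:
\begin{itemize}
\item By Schur's lemma and Wedderburn's theorem, the commutant of $U$ is a field containing $\GF(p)[U]\cong\GF(p^2)$.
\item A dimension count shows the commutant equals this copy of $\GF(p^2)$, so $C=\Sigma(p)$.
\item Conjugation by $N$ acts on $\GF(p)[U]$ by field automorphisms with kernel $C$. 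Hence $|N/C|\leq|\mathrm{Gal}(\GF(p^2)/\GF(p))|=2$, and the Frobenius map realises the nontrivial element.
\end{itemize}
This is essentially the dimension-$2$ case of the argument behind Huppert's theorem. Your version gains self-containedness at the cost of a few lines. It still determines $N=\langle\Sigma(p),\phi\rangle$ as a by-product, which is what the paper later records in Definition~\ref{defTypes}(5).

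You also make explicit the Sylow claim that the paper leaves as ``the claim follows''. One small point there: ``$q$ divides exactly one of $p-1$ and $p+1$'' first needs $q\mid p^2-1$. This holds because $U_q\leq\Sigma(p)$ has order dividing $p^2-1$.
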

\begin{proof}Most of this follows implicitly from \cite[Lemma 5.2]{cfisom}, that is, from work in \cite{gl2}. We provide details for completeness. The cases $e=1$ and $U$ scalar are obvious.  Now let $e=2$ and let $U\leq D(p)$ be non-scalar. The proof of Lemma \ref{lemProj}c) implies that $N_{\GL_2(p)}(U)\leq M(p)=\langle D(p),J\rangle$ where $J$ swaps the two eigenspaces of $U$. Since every matrix that centralises a non-scalar diagonal matrix is diagonal, $C_{\GL_2(p)}(U)=D(p)$ and therefore $\Theta(U)=\langle J|_U\rangle$ has  order $2$ if $U^J=U$, and $\Theta(U)=1$ otherwise. It remains to consider the irreducible case $U=\Sigma(p,b)$. Since $b\nmid p-1$, it follows from  (the proof of) \cite[Satz II.7.3]{hupp} that $N_{\GL_2(p)}(U)=N_{\GL_2(p)}(\Sigma(p))=\langle \Sigma(p),\theta\rangle$ where $\theta(s)=s^p$ for $s\in\Sigma(p)$. Thus, $\Theta(U)=\langle \theta_U\rangle$ where $\theta_U$ is the restriction of $\theta$ to $U$; the claim follows. 
\end{proof} 
 
\pagebreak

  \begin{remark}\label{rem_savegroup}
We emphasise that, in practice, the groups in $\mathcal{U}(p,e)$ are never constructed as matrix groups; we comment on how they are determined and how they are stored.
\begin{iprf} 
\item The reducible groups in $\mathcal{U}(p,2)$ are obtained as follows. Let $U\leq D(p)$ be of odd cubefree order. Since $J$ normalises $D(p)$, we have $(U^J)_q=(U_q)^J$ for every prime $q$, and therefore it suffices to determine the possible Sylow subgroups $U_q\leq D(p)_q$ and to consider the $J$-action on these tuples $(U_q)_q$. We first consider what happens for a fixed odd prime $q$. In this case $U_q\leq \langle\diag(w,1),\diag(1,w)\rangle$ where $w\in\GL_1(p)$ has order $q^c$ with $c=\min(v_q(p-1),2)$. The $\langle J\rangle$-orbits of such subgroups can be listed independently of $p$. For example, if $c\geq1$, then $v=w^{q^{c-1}}$ has order $q$ and the subgroups of order $q$ of $D(p)_q$ are exactly $A_k=\langle\diag(v,v^k)\rangle$ for $k\in\{0,\ldots,q-1\}$ and $B=\langle\diag(1,v)\rangle$; now note that $A_0^J=B$ and $A_k^J=A_{k^{-1}\bmod q}$ for each $k\ne 0$. In particular, $A_1$ and $A_{q-1}$ are $J$-invariant and the other $q-1$ subgroups form $(q-1)/2$ orbits of size $2$. Now, for each odd prime $q$ dividing $p-1$, fix one such $\langle J\rangle$-orbit $\mathcal{O}_q$. Let $q_1,\ldots,q_r$ be those $q$ with $|\mathcal{O}_q|=2$, say $\mathcal{O}_{q_i}=\{X_{q_i},X_{q_i}^J\}$, and let $q_1',\ldots,q_s'$ be those with $\mathcal{O}_{q_j'}=\{X_{q_j'}\}$. If $r=0$, then there is a unique subgroup $U$ with each $U_q\in\mathcal{O}_q$, and it is $J$-invariant. If $r\geq 1$, then the $\langle J\rangle$-orbits of the subgroups $U$ with each $U_q\in \mathcal{O}_q$ are represented by the $2^{r-1}$ groups
  \[U_\varepsilon=X_{q_1}\times \prod\nolimits_{i=2}^r X_{q_i}^{\varepsilon_i}\times \prod\nolimits_{j=1}^s X_{q_j'},\qquad \varepsilon=(\varepsilon_2,\ldots,\varepsilon_r)\in \{1,J\}^{r-1}.\]
 Thus, $\langle J\rangle$-orbit representatives can be written down explicitly without group calculations. 
\item  By Lemma \ref{lemProj}, the irreducible groups in $\mathcal{U}(p,2)$ and the groups in $\mathcal{U}(p,1)$ are $\Sigma(p,b)$ and $C(p,b)$, respectively, one for each admissible $b$. Their Sylow $q$-subgroups are  $\Sigma(p,b_q)$ and $C(p,b_q)$, where $b_q$ denotes the $q$-part of $b$, and the induced action of $\Theta(U)$ is described in Lemma \ref{lemT}.

\item In practice, each $U\in\mathcal{U}(p,e)$ is represented as a list of integers recording the order and rank of each Sylow subgroup together with the action induced by $\Theta(U)$. (We also list the auxiliary integer data required for Lemma \ref{lemFix}, and, for even order, the  \emph{type} of $U$ as given in  Definition \ref{defTypes}.)  Remark~\ref{rem_storage} provides more technical details on how this data is organised.
\end{iprf}
\end{remark}

Since the nontrivial automorphisms arising in Lemma \ref{lemT} have order $2$, we record the following.

\begin{lemma}\label{lemEig}
Let $U$ be an abelian group of odd order and let $\theta\in\Aut(U)$. If $\theta^2=1$, then $U=U^+\times U^-$ where $U^\pm=\{u\in U : \theta(u)=u^{\pm1}\}$ are the $\pm 1$-eigenspaces of the action of $\theta$.
\end{lemma}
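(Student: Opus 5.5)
The plan is to use the standard averaging idempotents, which are available because $|U|$ is odd and hence $2$ is invertible modulo $|U|$. Write the group $U$ multiplicatively and let $k$ be an integer with $2k\equiv 1 \pmod{|U|}$. Such a $k$ exists since $\gcd(2,|U|)=1$.

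First I will define two maps $U\to U$ by
\[\alpha(u)=(u\,\theta(u))^k\quad\text{and}\quad \beta(u)=(u\,\theta(u)^{-1})^k.\]
Because $U$ is abelian, both maps are homomorphisms.

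Next I will check that the images land in the right eigenspaces. Using $\theta^2=1$ and commutativity,
\[\theta(\alpha(u))=(\theta(u)\,u)^k=\alpha(u),\]
so $\alpha(u)\in U^+$. Similarly,
\[\theta(\beta(u))=(\theta(u)\,u^{-1})^k=\beta(u)^{-1},\]
so $\beta(u)\in U^-$.

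Then I will show that $U=U^+U^-$. We have
\[\alpha(u)\beta(u)=(u^2)^k=u^{2k}=u,\]
since $2k\equiv 1$ modulo the order of $u$. So every element of $U$ is a product of an element of $U^+$ and an element of $U^-$.

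Finally I will show that $U^+\cap U^-=1$. If $u$ lies in both eigenspaces, then $u=\theta(u)=u^{-1}$, so $u^2=1$. Since $|U|$ is odd, this forces $u=1$.

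It remains to note that $U^+$ and $U^-$ are subgroups. They are the kernel and fixed set of the homomorphisms $u\mapsto \theta(u)u^{-1}$ and $u\mapsto \theta(u)u$, and this uses that $U$ is abelian. Since $U$ is abelian, both subgroups are normal, and $U^+\cap U^-=1$ together with $U=U^+U^-$ gives the internal direct product $U=U^+\times U^-$.

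None of these steps is a real obstacle. The only point needing care is the choice of $k$, namely the existence of an inverse of $2$ modulo $|U|$, which is exactly where the odd-order hypothesis enters.
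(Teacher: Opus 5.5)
Your proof is correct and is essentially the paper's argument: the paper writes $|U|=2k-1$ (so $2k\equiv 1$ modulo $|U|$), decomposes $u=u^{2k}=(u\theta(u))^k\cdot(u\theta(u)^{-1})^k$ into its $U^+$ and $U^-$ components, and gets $U^+\cap U^-=1$ from odd order, exactly as you do. One small wording fix: $U^+$ and $U^-$ are both kernels, namely of $u\mapsto\theta(u)u^{-1}$ and $u\mapsto\theta(u)u$ respectively; calling one of them a ``fixed set'' is a slip, not a gap.
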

\begin{proof}
Write $|U|=2k-1$. Since $U$ is abelian and of odd order, both $U^\pm$ are subgroups with trivial intersection. Moreover, every $u\in U$ satisfies $u=u^{2k}=(u\theta(u))^k\cdot (u\theta(u)^{-1})^k$ with $(u\theta(u))^k\in U^+$ and $(u\theta(u)^{-1})^k\in U^-$; the claim follows.
\end{proof}

\subsection{Reduction 1: the acting group}\label{secReduce}

\noindent We now fix the projections, that is, a tuple  $\mathcal U=(U_1,\ldots,U_m)$ in $\prod_{i=1}^m\mathcal U(p_i,e_i)$, and we are interested in the isomorphism types of groups $K\ltimes S$ where $K$ is from 
\[\mathcal K(\mathcal U)=\{K\in\mathcal K : \pi_i(K)=U_i\text{ for $i=1,\ldots,m$}\}.\]
It follows from the discussion at the beginning of this section, and from the fact that each $C_{\GL_{e_i}(p_i)}(U_i)$ acts trivially on the subgroups of $U_1\times\ldots\times U_m$, that two groups $K\ltimes S$ and $\tilde K\ltimes S$ with $K,\tilde K\in\mathcal{K}(\mathcal U)$ are isomorphic if and only if $K$ and $\tilde K$ lie in the same orbit of
\begin{align}\label{def_ThetaU}\Theta(\mathcal{U})=\Theta_1\times\ldots\times \Theta_m\quad\text{where each}\quad \Theta_i=\Theta(U_i).
\end{align}Lemma \ref{lemT} shows that $\Theta(\mathcal{U})$ is an elementary abelian $2$-group that acts coordinate-wise on the set of subgroups of $U_1\times\ldots\times U_m$. Note that  $|\Theta(\mathcal U)|=2^{|\{i\,:\,\Theta(U_i)\ne 1\}|}$. We summarise these findings as follows.

\begin{proposition}\label{propRed}
Every $K\in\mathcal K$ is abelian, and every $\Aut(S)$-class of $\mathcal{K}$ meets $\mathcal{K}(\mathcal{U})$ for exactly one tuple of projections $\mathcal{U}$. Two subgroups  $K,\tilde K\in \mathcal{K}(\mathcal{U})$ are $\Aut(S)$-conjugate if and only if they lie in the same $\Theta(\mathcal{U})$-orbit. Consequently,
\[\gnuFF(n,\ell)=\sum\nolimits_{\mathcal{U}}|\mathcal{K}(\mathcal{U})/\Theta(\mathcal{U})|,\]
where $\mathcal U$ runs over all projection tuples and $\mathcal{K}(\mathcal{U})/\Theta(\mathcal{U})$ is a set of $\Theta(\mathcal{U})$-orbit representatives in~$\mathcal{K}(\mathcal{U})$.
\end{proposition}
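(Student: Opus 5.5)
We prove the three assertions of Proposition~\ref{propRed} in turn. The counting formula then follows by summing orbit counts.

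\emph{Abelianness.} Let $K\in\mathcal K$. Then $K$ embeds into $K_1\times\ldots\times K_m$, since it is a subdirect product of its projections. Each $K_i=\pi_i(K)\leq\GL_{e_i}(p_i)$ has odd cubefree order coprime to $p_i$, as noted after Notation~\ref{notSetup}. By Lemma~\ref{lemProj} each $K_i$ is therefore abelian. Hence the direct product is abelian, and so is $K$.

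\emph{Exactly one projection tuple per class.} Let $K\in\mathcal K$. By Definition~\ref{def_Upc}, each $K_i$ is $\GL_{e_i}(p_i)$-conjugate to a unique $U_i\in\mathcal U(p_i,e_i)$, say $K_i^{g_i}=U_i$. Set $g=(g_1,\ldots,g_m)\in\Aut(S)$. Since $\pi_i(K^g)=\pi_i(K)^{g_i}$, we get $K^g\in\mathcal K(\mathcal U)$ with $\mathcal U=(U_1,\ldots,U_m)$, so the class of $K$ meets $\mathcal K(\mathcal U)$.

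For uniqueness, suppose $K^h\in\mathcal K(\tilde{\mathcal U})$ for some $h=(h_1,\ldots,h_m)\in\Aut(S)$. Then $\tilde U_i=\pi_i(K)^{h_i}$ is conjugate to $U_i$. Both are canonical representatives of the same class, so $\tilde U_i=U_i$ for every $i$, and $\tilde{\mathcal U}=\mathcal U$.

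\emph{Conjugacy within $\mathcal K(\mathcal U)$ equals $\Theta(\mathcal U)$-orbits.} Let $K,\tilde K\in\mathcal K(\mathcal U)$ and suppose $\tilde K=K^g$ with $g=(g_1,\ldots,g_m)$. Projecting gives $U_i^{g_i}=U_i$, so $g_i\in N_i:=N_{\GL_{e_i}(p_i)}(U_i)$. Now $K$ and $\tilde K$ are subgroups of $U_1\times\ldots\times U_m$. Conjugation by $g$ acts on this product coordinatewise, through the images $\bar g_i\in N_i/C_{\GL_{e_i}(p_i)}(U_i)=\Theta(U_i)$. So the action of $g$ on subgroups of $U_1\times\ldots\times U_m$ depends only on $(\bar g_1,\ldots,\bar g_m)\in\Theta(\mathcal U)$. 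Hence $\tilde K$ lies in the $\Theta(\mathcal U)$-orbit of $K$.

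Conversely, take any element of $\Theta(\mathcal U)$ and lift each coordinate to some $g_i\in N_i$. The resulting $g\in\Aut(S)$ realises that element, so $\Theta(\mathcal U)$-orbits consist of $\Aut(S)$-conjugate subgroups. This action of $\Theta(\mathcal U)$ also preserves $\mathcal K(\mathcal U)$: conjugation by $g$ preserves order, and the projections are preserved because $U_i^{g_i}=U_i$.

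\emph{The formula.} By Proposition~\ref{propredFF}c), $\gnuFF(n,\ell)$ is the number of $\Aut(S)$-classes in $\mathcal K$. By the second assertion, these classes are partitioned according to the tuple $\mathcal U$. By the third assertion, the classes belonging to a fixed $\mathcal U$ are in bijection with $\mathcal K(\mathcal U)/\Theta(\mathcal U)$. Summing over all $\mathcal U$ gives the stated formula.

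The only mildly delicate point is the claim that centralisers act trivially, so that the action factors through $\Theta(\mathcal U)$. This holds because each $C_{\GL_{e_i}(p_i)}(U_i)$ centralises $U_i$ elementwise, and hence fixes every element of $U_1\times\ldots\times U_m$, and in particular every subgroup of it.
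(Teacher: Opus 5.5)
Your proof is correct and follows essentially the same route as the paper. Abelianness comes from Lemma~\ref{lemProj} applied to the projections of the subdirect product; each projection is conjugated to its canonical representative in $\mathcal{U}(p_i,e_i)$; the conjugating element is confined to $\prod_i N_{\GL_{e_i}(p_i)}(U_i)$; and since each $C_{\GL_{e_i}(p_i)}(U_i)$ acts trivially on subgroups of $U_1\times\ldots\times U_m$, the action factors through $\Theta(\mathcal{U})$. You spell out the uniqueness of the projection tuple and the well-definedness of the $\Theta(\mathcal{U})$-action more explicitly than the paper, which only sketches them in the discussion preceding the proposition.
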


\subsection{Reduction 2: fixing the isomorphism type of $K$}
Since every $K\in\mathcal{K}$ is abelian of order $\nu$, its isomorphism type is represented by a unique element in  $\mathcal{A}(\nu)$, see Definition \ref{def_setup}. For a projection tuple $\mathcal{U}=(U_1,\ldots,U_m)$ and $L\in\mathcal{A}(\nu)$ we now define
\[\mathcal{K}(\mathcal{U},L)=\{K\in \mathcal{K}(\mathcal{U}) : K\cong L\};\]
that is, the elements of $\mathcal{K}(\mathcal{U},L)$ are exactly the subgroups of $\Aut(S)$ of order $\nu$ that are isomorphic to $L$ and whose $i$-th projection is $U_i$. In particular, this requires that $U_i$ is a quotient of $L$.  That is, instead of looking at all the groups  $U_i\in \mathcal{U}(p_i,e_i)$ as defined in Definition \ref{def_Upc} we require that each $U_i\in \mathcal{U}(p_i,e_i,L)$ where
\begin{align}\label{eq_defUpel}\mathcal{U}(p_i,e_i,L)=\{U\in \mathcal{U}(p_i,e_i): U\text{ is a quotient of }L\}.
\end{align}This yields the following formula.

\begin{corollary}\label{corRed}
  We have
  \[\gnuFF(n,\ell)=\sum\nolimits_{L\in\mathcal{A}(\nu)} \sum\nolimits_{\mathcal{U}}|\mathcal{K}(\mathcal{U},L)/\Theta(\mathcal{U})|,\]
  where $\mathcal{U}$ runs over all  tuples in $\prod_{i=1}^m \mathcal{U}(p_i,e_i,L)$.
\end{corollary}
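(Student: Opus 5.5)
The plan is to start from Proposition \ref{propRed}, which gives $\gnuFF(n,\ell)=\sum_{\mathcal U}|\mathcal K(\mathcal U)/\Theta(\mathcal U)|$, and to partition each $\mathcal K(\mathcal U)$ according to the isomorphism type of its members. By Proposition \ref{propRed}, every $K\in\mathcal K$ is abelian of order $\nu$. Hence each $K\in\mathcal K(\mathcal U)$ is isomorphic to exactly one $L\in\mathcal A(\nu)$, using the uniqueness of abelian invariants for cubefree orders noted after Definition \ref{def_setup}. This gives the disjoint union $\mathcal K(\mathcal U)=\bigsqcup_{L\in\mathcal A(\nu)}\mathcal K(\mathcal U,L)$.

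The next step is to check that each piece $\mathcal K(\mathcal U,L)$ is $\Theta(\mathcal U)$-invariant. The group $\Theta(\mathcal U)$ acts on subgroups of $U_1\times\dots\times U_m$ by coordinate-wise automorphisms, so its elements are induced by elements of $\prod_i N_{\GL_{e_i}(p_i)}(U_i)$ acting by conjugation. Conjugation preserves both the isomorphism type of $K$ and the projections $\pi_i(K)=U_i$. Therefore the orbits of $\Theta(\mathcal U)$ on $\mathcal K(\mathcal U)$ split accordingly, and
\[|\mathcal K(\mathcal U)/\Theta(\mathcal U)|=\sum\nolimits_{L\in\mathcal A(\nu)}|\mathcal K(\mathcal U,L)/\Theta(\mathcal U)|.\]

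Finally, I will swap the order of summation: sum over $L$ first, then over $\mathcal U\in\prod_i\mathcal U(p_i,e_i)$. For fixed $L$, I will show that a tuple $\mathcal U$ with some $U_i\notin\mathcal U(p_i,e_i,L)$ contributes nothing. If $K\in\mathcal K(\mathcal U,L)$, then $\pi_i$ restricted to $K$ is a surjection $K\to U_i$, so $U_i$ is a quotient of $K\cong L$. Thus $\mathcal K(\mathcal U,L)$ is empty unless every $U_i$ lies in $\mathcal U(p_i,e_i,L)$, and the inner sum can be restricted to $\prod_i\mathcal U(p_i,e_i,L)$. This gives the stated formula.

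There is no substantial obstacle here. The only point needing care is the invariance of $\mathcal K(\mathcal U,L)$ under $\Theta(\mathcal U)$, which holds because $\Theta(\mathcal U)$ acts by group automorphisms that fix each projection.
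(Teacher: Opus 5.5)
Your proposal is correct and follows the paper's own reasoning. You partition $\mathcal{K}(\mathcal{U})$ from Proposition~\ref{propRed} by the isomorphism type $L\in\mathcal{A}(\nu)$ of the abelian complement, and you drop tuples with some $U_i$ not a quotient of $L$, since $\pi_i$ maps $K\cong L$ onto $U_i$; your explicit check that $\Theta(\mathcal{U})$ preserves each $\mathcal{K}(\mathcal{U},L)$ is a step the paper leaves implicit.
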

 
\begin{remark}\label{rem_savegroup2}
Remark \ref{rem_savegroup}c) also applies to the sets $\mathcal{U}(p,e,L)$ and projection tuples.
\end{remark}

\enlargethispage{0.4cm}
\subsection{Reduction 3: primary decomposition }
We continue with the previous notation; in particular, $L\in\mathcal{A}(\nu)$ is fixed and $\mathcal{U}=(U_1,\ldots,U_m)$ is a tuple in  $\prod_{i=1}^m \mathcal{U}(p_i,e_i,L)$. Our next reduction makes use of the direct product decomposition $K=\prod_{q\mid\nu}K_q$ of  $K\in\mathcal{K}(\mathcal{U},L)$; recall that $q$ denotes a prime, so $q\mid\nu$ runs over the prime divisors of $\nu$. Note that each $K_q\cong L_q$, and so $K_q\in \mathcal{K}_q(\mathcal{U},L)$, which is defined as
\begin{equation}\label{eq_defKq}\mathcal{K}_q(\mathcal{U},L)=\{ H\leq (U_1\times\ldots\times U_m)_q : H\cong L_q\text{ and }\pi_i(H)=(U_i)_q \text{ for all $i$}\}.
\end{equation}Then $K\mapsto \prod_{q\mid\nu} K_q$ is a $\Theta(\mathcal{U})$-equivariant bijection
\begin{equation}\label{eqSplit}
\mathcal{K}(\mathcal{U},L) \to \prod\nolimits_{q\mid\nu}\mathcal{K}_q(\mathcal{U},L),
\end{equation}
where each $\theta\in \Theta(\mathcal{U})$ acts by restriction on each $\mathcal{K}_q(\mathcal{U},L)$. Now the Cauchy-Frobenius Lemma \cite[Lemma 2.17]{handbook} shows that \[|\mathcal{K}(\mathcal{U},L)/\Theta(\mathcal{U})|=|\Theta(\mathcal{U})|^{-1}\sum\nolimits_{\theta\in \Theta(\mathcal{U})}\ \prod\nolimits_{q\mid\nu}|\mathcal{K}_q(\mathcal{U},L)^{\theta}|\]
where $\mathcal{K}_q(\mathcal{U},L)^{\theta}$ are the fixed points of $\theta$ in $\mathcal{K}_q(\mathcal{U},L)$. Thus, we have the following formula.

\begin{corollary}\label{cor_Burnside}
  We have
  \[\gnuFF(n,\ell)=\sum\nolimits_{L\in\mathcal{A}(\nu)} \sum\nolimits_{\mathcal{U}}|\Theta(\mathcal{U})|^{-1}\sum\nolimits_{\theta\in \Theta(\mathcal{U})}\ \prod\nolimits_{q\mid\nu}|\mathcal{K}_q(\mathcal{U},L)^{\theta}|,\]
  where $\mathcal{U}$ runs over all  tuples in $\prod_{i=1}^m \mathcal{U}(p_i,e_i,L)$ and $q$ over all prime divisors of $\nu$.
\end{corollary}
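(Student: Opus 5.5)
The statement is a direct consequence of Corollary~\ref{corRed} together with the primary decomposition, so the plan is to verify carefully the two ingredients used just above: that the map \eqref{eqSplit} is a well-defined $\Theta(\mathcal{U})$-equivariant bijection, and that the Cauchy--Frobenius Lemma applied to it factorises over primes $q\mid\nu$. Starting from Corollary~\ref{corRed}, it suffices to show for each fixed $L\in\mathcal{A}(\nu)$ and each projection tuple $\mathcal{U}$ that
\[|\mathcal{K}(\mathcal{U},L)/\Theta(\mathcal{U})|=|\Theta(\mathcal{U})|^{-1}\sum\nolimits_{\theta\in\Theta(\mathcal{U})}\prod\nolimits_{q\mid\nu}|\mathcal{K}_q(\mathcal{U},L)^\theta|.\]

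\textbf{Step 1 (the bijection).} For $K\in\mathcal{K}(\mathcal{U},L)$, $K$ is abelian (Proposition~\ref{propRed}), so $K=\prod_q K_q$ with $K_q$ its unique Sylow $q$-subgroup; $K_q\cong L_q$, and $K_q\leq (U_1\times\cdots\times U_m)_q$. Since $\pi_i$ is a homomorphism onto the abelian group $U_i$, it maps Sylow subgroups onto Sylow subgroups, so $\pi_i(K_q)=(U_i)_q$. Hence $K_q\in\mathcal{K}_q(\mathcal{U},L)$. Conversely, given $H_q\in\mathcal{K}_q(\mathcal{U},L)$ for each $q\mid\nu$, the product $H=\prod_q H_q$ is a subgroup (inside an abelian group, coprime orders) of order $\prod_q|L_q|=\nu$, isomorphic to $L$, and $\pi_i(H)=\prod_q\pi_i(H_q)=\prod_q(U_i)_q=U_i$. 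Note that every prime dividing some $|U_i|$ divides $\nu$, since $U_i$ is a quotient of $L$. The two maps are mutually inverse by uniqueness of Sylow subgroups in abelian groups.

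\textbf{Step 2 (equivariance and counting).} Each $\theta\in\Theta(\mathcal{U})$ acts as an automorphism of $U_1\times\cdots\times U_m$ (coordinatewise), hence preserves orders, isomorphism types, and Sylow $q$-parts. It also commutes with the projections, since $\theta$ acts on the $i$-th coordinate by an automorphism of $U_i$. Therefore $\theta(K)_q=\theta(K_q)$, and $\theta$ preserves each $\mathcal{K}_q(\mathcal{U},L)$, so the bijection is equivariant. Under the bijection, $K$ is fixed by $\theta$ if and only if every $K_q$ is fixed, so $|\mathcal{K}(\mathcal{U},L)^\theta|=\prod_q|\mathcal{K}_q(\mathcal{U},L)^\theta|$. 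Applying Cauchy--Frobenius to the finite group $\Theta(\mathcal{U})$ acting on $\mathcal{K}(\mathcal{U},L)$ then gives the displayed identity. Substituting into Corollary~\ref{corRed} completes the proof.

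There is no real obstacle here; the only point needing care is the surjectivity of the projections onto Sylow subgroups in Step 1, which is where the abelianness of $K$ and $U_i$ is used.
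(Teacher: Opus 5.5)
Your proposal is correct and follows the paper's argument: the paper derives the corollary from Corollary~\ref{corRed} via the $\Theta(\mathcal{U})$-equivariant bijection \eqref{eqSplit}, $K\mapsto\prod_{q\mid\nu}K_q$, together with the Cauchy--Frobenius Lemma, and you have supplied the routine checks the paper leaves implicit (well-definedness, the inverse map, equivariance, and that fixed points factor over the primes $q$). One small correction to your closing remark: an epimorphism maps a Sylow $q$-subgroup onto a Sylow $q$-subgroup of the image in any finite group, so abelianness is not needed for that step; it is really used for the direct decomposition $K=\prod_q K_q$ and for the converse product construction.
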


We use later that Corollary \ref{cor_Burnside} also holds for even cubefree numbers if  the even part is completely in the socle, that is, if $n/\ell$ is odd, see Lemma \ref{lemCasesEven}.

\subsection{Subdirect product subgroups: the numbers $|\mathcal{K}_q(\mathcal{U},L)^{\theta}|$}\label{secStep1}

\noindent We continue using the previous notation; in particular, $L\in\mathcal{A}(\nu)$ is a fixed isomorphism type and $\mathcal U=(U_1,\ldots,U_m)$ a projection tuple in $\prod_{i=1}^m\mathcal U(p_i,e_i,L)$. The aim of this section is to exhibit a closed formula for  $|\mathcal{K}_q(\mathcal U,L)^\theta|$ where $q$ is a fixed prime divisor of $\nu$ and $\theta\in\Theta(\mathcal{U})$ is a fixed automorphism.

To simplify the notation, throughout we write $W=X_1\times\ldots\times X_m$ where each $X_i=(U_i)_q$, so that $W=(U_1\times\ldots\times U_m)_q$ and
\[\mathcal{K}_q(\mathcal{U},L)=\{H\leq W : H\cong L_q\ \text{ and }\ \pi_i(H)=X_i\ \text{ for all }i\}.\]
Note that $\mathcal{K}_q(\mathcal{U},L)$ is the set of all subdirect products of $X_1,\ldots,X_m$ isomorphic to $L_q$. Since $U_i$ is a quotient of $L$, each $X_i$ is a quotient of $L_q$. Since $L_q$ is one of $C_q$, $C_{q^2}$, and $C_q^2$, all $X_i$ are cyclic if $L_q$ is cyclic, and  all $X_i$ are elementary abelian if $L_q\cong C_q^2$. In particular, for each $i$ the values
\[j_i=v_q(|X_i|)\quad\text{and}\quad d_i=\rk(X_i)\]
lie in $\{0,1,2\}$, and $j_i=d_i$ whenever $X_i$ is elementary abelian. We will need the following functions on $\{0,1,2\}$; the triples list the images of $0,1,2$, respectively.
\[\begin{array}{@{}lll@{}}
g(j)=\text{number of generators of $C_{q^j}$}, & &g=(1,\,q-1,\,q(q-1)),\\[2pt]
c(d)=\text{number of }1\times d\text{ matrices over $\GF(q)$  of rank }d, & &c=(1,\,q-1,\,0),\\[2pt]
r(d)=\text{number of }2\times d\text{ matrices over $\GF(q)$  of rank }d, & &r=(1,\,q^2-1,\,(q^2-1)(q^2-q)),\\[2pt]
z(d)=\Delta_{d=0}, & &z=(1,\,0,\,0).
\end{array}\]

\begin{lemma}\label{lemSizes}
With the notation above,
\[|\mathcal K_q(\mathcal{U},L)|=\begin{cases}
\tfrac{1}{q-1}\Big(\prod\nolimits_{i=1}^mc(d_i)-\prod\nolimits_{i=1}^mz(d_i)\Big)&\text{if }L_q\cong C_q,\\[10pt]
\tfrac{1}{q(q-1)}\Big(\prod\nolimits_{i=1}^mg(j_i)-\prod\nolimits_{i=1}^mc(j_i)\Big)&\text{if }L_q\cong C_{q^2},\\[10pt]
\tfrac{1}{|\GL_2(q)|}\Big(\prod\nolimits_{i=1}^mr(d_i)-(q+1)\prod\nolimits_{i=1}^mc(d_i)+q\prod\nolimits_{i=1}^mz(d_i)\Big)&\text{if }L_q\cong C_q^2.
\end{cases}\]
\end{lemma}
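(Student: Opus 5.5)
The plan is to count subdirect products by counting surjective homomorphisms. A subgroup $H\le W$ with $H\cong L_q$ and all $\pi_i(H)=X_i$ corresponds to a tuple of epimorphisms $(\phi_1,\ldots,\phi_m)$ with $\phi_i\colon L_q\to X_i$: fix an abstract copy of $L_q$ and let $\Phi=(\phi_i)_i\colon L_q\to W$. Then $\Phi$ is injective exactly when its image is isomorphic to $L_q$. Two injective tuples have the same image if and only if they differ by precomposition with an element of $\Aut(L_q)$, and this action on injective tuples is free. Hence
\[|\mathcal K_q(\mathcal U,L)|=\frac{\#\{\text{injective }(\phi_i)_i\text{ with each }\phi_i\text{ surjective}\}}{|\Aut(L_q)|}.\]
The denominators $q-1$, $q(q-1)$ and $|\GL_2(q)|$ are $|\Aut(C_q)|$, $|\Aut(C_{q^2})|$ and $|\Aut(C_q^2)|$, respectively.

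\textbf{Counting epimorphism tuples.} Since the choices of the $\phi_i$ are independent, the number of all tuples of epimorphisms is a product over $i$. I then subtract the non-injective tuples by inclusion--exclusion over the nontrivial subgroups of $L_q$ that may lie in the kernel.
\begin{ithm}
\item For $L_q=C_q$, the number of epimorphisms $C_q\to X_i$ is $c(d_i)$, since $d_i\in\{0,1\}$. A tuple fails to be injective only if it is trivial, which happens only when all $d_i=0$. This gives $\prod c(d_i)-\prod z(d_i)$.
\item For $L_q=C_{q^2}$ with $X_i\cong C_{q^{j_i}}$, the number of epimorphisms is $g(j_i)$. The tuple is non-injective exactly when the unique subgroup of order $q$ lies in the common kernel, that is, when every $\phi_i$ factors through $C_q$. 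The number of such tuples is $\prod c(j_i)$, because an epimorphism $C_q\to C_{q^{j}}$ exists only for $j\le 1$, with $c(j)$ choices.
\item For $L_q=C_q^2$ with each $X_i$ elementary of rank $d_i$, an epimorphism corresponds to a $2\times d_i$ matrix of rank $d_i$, giving $r(d_i)$ choices. The tuple is non-injective exactly when some line $\ell\le C_q^2$ is killed by every $\phi_i$. For a fixed line there are $\prod c(d_i)$ such tuples, since $\phi_i$ then factors through $C_q^2/\ell\cong C_q$. Two distinct lines are both killed only by the trivial tuple, counted by $\prod z(d_i)$. Inclusion--exclusion over the $q+1$ lines gives the number of non-injective tuples as $(q+1)\prod c - \binom{q+1}{2}\prod z+\ldots$. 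This is cleaner to organise by the exact kernel: the non-injective tuples are those with kernel a line, numbering $(q+1)(\prod c-\prod z)$, plus those with the whole group as kernel, numbering $\prod z$. Subtracting, the number of injective tuples is $\prod r-(q+1)\prod c+q\prod z$, as claimed.
\end{ithm}

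\textbf{Main obstacle.} The main care point is this last case. The count must be organised by the exact kernel rather than by naive inclusion--exclusion over lines, and one must check that $c(2)=0$ correctly encodes that a rank-$2$ image cannot factor through $C_q$. In each case I will also verify that the formulas return $0$ when no subdirect product exists, which is automatic because the relevant products vanish. The justification of the free $\Aut(L_q)$-action and of the identification of subgroups with images is routine.
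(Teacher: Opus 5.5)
Your proposal is correct and is essentially the paper's argument: an injective homomorphism $L_q\to W$ with surjective coordinates is exactly the paper's element $v$ with $\langle v_i\rangle=X_i$ (cyclic case) or its full-rank block matrix $M$ (case $C_q^2$), and dividing by $|\Aut(L_q)|$ is the paper's division by $g(e)$ or $|\GL_2(q)|$. Your exact-kernel bookkeeping matches the paper's subtraction of the rank-$\leq 1$ matrices and of the $q$-fold overcount of the empty matrix; note that naive inclusion--exclusion over the $q+1$ lines also works, since $\sum_{k\geq 2}(-1)^{k+1}\binom{q+1}{k}=-q$, so organising by exact kernel is a convenience rather than a necessity.
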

\begin{proof}
  \begin{iprf} 
 \item[{\bf (1)}] Let $L_q$ be cyclic of order $q^e$, so all $X_i$ are cyclic of order $q^{j_i}$ with $j_i\leq e$.  In particular, there is $v=(v_1,\ldots,v_m)\in W$ such that each $\langle v_i\rangle=X_i$. The group $\langle v\rangle$ projects onto every $X_i$ and has order $\max_i|v_i|$. Conversely, every $H\in\mathcal K_q(\mathcal U,L)$ (if it exists) is generated by such an element $v$. Note that $\mathcal{K}_q(\mathcal{U},L)\ne \emptyset$ if and only if $j_i=e$ for some $i$. It follows that  the groups in $\mathcal K_q(\mathcal U,L)$ are exactly  $\langle v\rangle$ with $v$ as above such that $j_i=e$ for some $i$. If $j_i=e$ for some $i$, then there are $\prod_{i=1}^mg(j_i)$ such elements $v$, and each $H\in\mathcal K_q(\mathcal U,L)$ is generated by $g(e)$ of them, so
   \[|\mathcal K_q(\mathcal{U},L)|=\Delta_{e=\max_i j_i} \cdot g(e)^{-1}\prod\nolimits_{i=1}^mg(j_i).\]
\item[{\bf (1.1)}] If $e=1$, then each $j_i=d_i\in\{0,1\}$ and so $g(j_i)=c(j_i)=c(d_i)$ and $\prod_{i=1}^m z(d_i)=\Delta_{j_1=\ldots=j_m=0}$. Thus, if all $j_i=d_i=0$, then $\Delta_{e=\max j_i}=0$ and each $c(d_i)=1=z(d_i)$, and so 
\[\tfrac{1}{q-1}\Big(\prod\nolimits_{i=1}^mc(d_i)-\prod\nolimits_{i=1}^mz(d_i)\Big)=0=\Delta_{e=\max j_i}\cdot  g(e)^{-1}\prod\nolimits_{i=1}^mg(j_i).\]If there exists $i$ such that $j_i=d_i=1$, then $\Delta_{e=\max j_i}=1$ and $\prod_{i=1}^m z(d_i)=0$, hence
\[\tfrac{1}{q-1}\Big(\prod\nolimits_{i=1}^mc(d_i)-\prod\nolimits_{i=1}^mz(d_i)\Big)=\tfrac{1}{q-1}\prod\nolimits_{i=1}^mc(d_i)=\Delta_{e=\max j_i}\cdot  g(e)^{-1}\prod\nolimits_{i=1}^mg(j_i).\]
This proves the displayed formula in the lemma for $L_q\cong C_q$.
\item[{\bf (1.2)}] If $e=2$, then  $c(j_i)=g(j_i)$ if $j_i\leq1$ and $c(2)=0$, so $\prod_{i=1}^mc(j_i)=\Delta_{j_1,\ldots,j_m \leq 1}\prod_{i=1}^m g(j_i)$. This implies
  \begin{align*}\tfrac{1}{q(q-1)}\Big(\prod\nolimits_{i=1}^mg(j_i)-\prod\nolimits_{i=1}^mc(j_i)\Big)&=
  \tfrac{1}{q(q-1)}\Big(\prod\nolimits_{i=1}^mg(j_i)-\Delta_{j_1,\ldots,j_m \leq 1}\prod_{i=1}^m g(j_i)\Big)\\&=
  \Delta_{e=\max_i j_i} \cdot g(e)^{-1}\prod\nolimits_{i=1}^mg(j_i),
  \end{align*}
which proves the displayed formula in the lemma for $L_q\cong C_{q^2}$.
 
\item[{\bf (2)}] Let $L_q\cong C_q^2$, so each $X_i$ is elementary abelian and $W$ is a ${\rm GF}(q)$-space of dimension $\sum_{i=1}^m d_i$. A subgroup $H\leq W$ with $H\cong L_q$ is a $2$-dimensional subspace, and we can describe it as the row space of a $(2\times\sum_{i=1}^m d_i)$-matrix $M=(M_1|\ldots|M_m)$ with blocks $M_i$ of size $2\times d_i$. Then $\pi_i(H)=X_i$ if and only if $M_i$ has full rank $d_i$, and $\dim H=2$ if and only if $M$ has rank $2$; two matrices have the same row space if and only if they differ by an element of $\GL_2(q)$. Hence $|\mathcal{K}_q(\mathcal{U},L)|$ is $|\GL_2(q)|^{-1}$ times the number of matrices $M$ as above that have rank $2$ and all of whose blocks have full rank.

  There are $\prod_ir(d_i)$ matrices $M=(M_1|\ldots|M_m)$ all of whose blocks $M_i$ have full rank, so it remains to subtract those $M$ of rank at most $1$. Such a matrix has all its columns in a common $1$-dimensional subspace of ${\rm GF}(q)^2$, which implies that each block $M_i$ has rank $d_i\leq1$. Such a matrix is obtained by choosing one of the $q+1$ subspaces of dimension $1$ and then a nonzero vector in it for each $d_i=1$; this gives $(q+1)\prod_ic(d_i)$ matrices. If all $d_i=0$ then we overcount the `empty matrix' $q$ times and we correct this by subtracting $q\prod_{i=1}^m z(d_i)$. (Note that if some $d_i=2$ then  both products are $0$.)  This yields the displayed formula for $L_q\cong C_q^2$.
  \end{iprf} 
\end{proof} 

\enlargethispage{0.6cm}
In the next lemma we continue with the above notation; recall that  $\theta\in\Theta(\mathcal{U})$ is fixed.
 \begin{lemma}\label{lemFix}
We continue with the above notation and decompose each  $X_i=X_i^+\times X_i^-$ into the $\theta$-eigenspaces as in Lemma \ref{lemEig}, and define $d_i^\pm=\rk(X_i^\pm)$.  Then
\begin{align}\label{eq_kqut}|\mathcal{K}_q(\mathcal{U},L)^\theta|=\Big(\prod\nolimits_{i=1}^mz(d_i^-)+\prod\nolimits_{i=1}^mz(d_i^+)\Big)|\mathcal K_q(\mathcal{U},L)| + \Delta_{L_q\cong C_q^2}\ \Lambda^+\Lambda^-
\end{align}
where
\[\Lambda^\pm=\tfrac{1}{q-1}\Big(\prod\nolimits_{i=1}^mc(d_i^\pm)-\prod\nolimits_{i=1}^mz(d_i^\pm)\Big).\]
\end{lemma}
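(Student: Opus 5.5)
The plan is to classify the $\theta$-invariant subgroups $H\in\mathcal K_q(\mathcal U,L)$ by how they split into $\theta$-eigenspaces, and then count each class using Lemma \ref{lemSizes}. Since $\theta\in\Theta(\mathcal U)$ acts coordinate-wise and $\theta^2=1$, Lemma \ref{lemEig} gives a decomposition $W=W^+\times W^-$ with $W^\pm=X_1^\pm\times\ldots\times X_m^\pm$. Moreover, each projection $\pi_i$ is $\theta$-equivariant, so $\pi_i(W^\pm)=X_i^\pm$.

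First step. Let $H\le W$ be $\theta$-invariant. Applying Lemma \ref{lemEig} to $H$ itself (abelian of odd order) gives $H=H^+\times H^-$ with $H^\pm=H\cap W^\pm$. Hence $\pi_i(H)=\pi_i(H^+)\times\pi_i(H^-)$ with $\pi_i(H^\pm)\le X_i^\pm$. It follows that $\pi_i(H)=X_i$ if and only if $\pi_i(H^\pm)=X_i^\pm$ for both signs. Conversely, every subgroup of the form $H^+\times H^-$ with $H^\pm\le W^\pm$ is $\theta$-invariant.

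Second step: split into cases according to whether $H^-$ or $H^+$ is trivial.

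\emph{Case $H^-=1$.} Then $H\le W^+$, and the projection condition forces $X_i^-=1$ for all $i$, that is, $\prod_i z(d_i^-)=1$. Conversely, if all $d_i^-=0$, then $W=W^+$ and $\theta$ acts trivially, so every element of $\mathcal K_q(\mathcal U,L)$ is fixed. This case therefore contributes exactly $\prod_i z(d_i^-)\,|\mathcal K_q(\mathcal U,L)|$.

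\emph{Case $H^+=1$.} Symmetrically, this contributes $\prod_i z(d_i^+)\,|\mathcal K_q(\mathcal U,L)|$.

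\emph{Overlap.} The two cases overlap only if $H=1$. That happens only when all $X_i=1$. In that situation $\mathcal K_q(\mathcal U,L)=\emptyset$, because $q\mid\nu$ forces $L_q\ne1$. So no double counting occurs, and the sum of the two products is correct.

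\emph{Case both $H^\pm\ne1$.} This is impossible if $L_q$ is cyclic, since a cyclic $q$-group is directly indecomposable. Hence it arises only when $L_q\cong C_q^2$, which accounts for the factor $\Delta_{L_q\cong C_q^2}$. In this case $H^+$ and $H^-$ both have order $q$. All $X_i$ are elementary abelian, so each $X_i^\pm$ is elementary abelian of rank $d_i^\pm\le 1$ or rank $2$. The pairs $(H^+,H^-)$ correspond bijectively to such $H$. Here $H^\pm$ ranges over the subgroups of order $q$ of $W^\pm$ that project onto each $X_i^\pm$. By the $C_q$ case of Lemma \ref{lemSizes}, applied to the factors $X_i^\pm$ with ranks $d_i^\pm$, there are exactly $\Lambda^\pm$ of them. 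That lemma's formula is valid verbatim here, including when some $d_i^\pm=2$, where $c(2)=0$ correctly gives no subgroups. This case contributes $\Lambda^+\Lambda^-$, and these subgroups are distinct from those of the first two cases.

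Summing the three contributions gives \eqref{eq_kqut}. The only delicate points are the interplay between the projection condition and the eigenspace splitting, and the disjointness of the cases; both are handled above using equivariance of $\pi_i$ and the observation that $L_q\ne 1$.
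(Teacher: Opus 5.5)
Your proof is correct and uses the same ingredients as the paper's proof: the splitting $H=H^+\times H^-$ of a $\theta$-invariant subgroup, the fact that $\pi_i(H)=X_i$ holds if and only if it holds on each eigenspace separately, the observation that a fixed $H\leq W^\pm$ forces $W=W^\pm$, and the count $\Lambda^\pm$ of order-$q$ subdirect products of the $X_i^\pm$. The only difference is bookkeeping. You partition $\mathcal{K}_q(\mathcal{U},L)^\theta$ according to which of $H^\pm$ is trivial, whereas the paper first splits according to whether $\theta$ acts on $W$ trivially, by inversion, or neither, and treats the cyclic and $C_q^2$ cases separately in the last case; the two arguments amount to the same thing.
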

\begin{proof}
Let $W=W^+\times W^-$ with $W^\pm=X_1^\pm\times\ldots\times X_m^\pm$ be the eigenspace decomposition of  $\theta$. Since $z(d_i^\pm)=1$ if and only if $X_i^\pm=1$, it follows that
\[(\ast)\quad \prod\nolimits_iz(d_i^-)=\Delta_{\theta\text{ acts trivially on }W}\qquad\text{and}\qquad \prod\nolimits_iz(d_i^+)=\Delta_{\theta\text{ inverts }W}.\]
Since $|W|$ is odd, both products are $1$ if and only if $W=1$: in that case $\mathcal K_q(\mathcal U,L)=\emptyset$, and since each $c(d_i^\pm)=1=z(d_i^\pm)$, we have  $\Lambda^+\Lambda^-=0$ as well; so both sides of \eqref{eq_kqut} are $0$. We now  assume that $W\ne 1$ and define the following sets of subgroups
 \[\mathcal{A}^\pm=\{A\leq X_1^\pm\times\ldots\times X_m^\pm: A\cong C_q\text{ and } \pi_i(A)=X_i^\pm \text{ for all $i$}\}.\]

 \begin{iprf}
 \item[{\bf (1)}] We prove that $|\mathcal{A}^\pm|=\Lambda^\pm$ if $L_q\cong C_q^2$; this is the only case in which the sets $\mathcal{A}^\pm$ are used, see (4). Note  that then each $X_i$, and hence each $X_i^\pm$, is elementary abelian, so  $|X_i^\pm|=q^{d_i^\pm}$. If some $d_i^\pm=2$, then no group of order $q$ maps onto $X_i^\pm$, so $|\mathcal{A}^\pm|=0$, and $\Lambda^\pm=0$  since $c(2)=z(2)=0$. Now suppose each $d_i^\pm\leq 1$, and set $\delta^\pm=|\{i : X_i^\pm\neq1\}|$. If $\delta^\pm =0$, then each $X_i^\pm=1$ and $d_i^\pm=0$, so $\mathcal{A}^\pm=\emptyset$, and  $\Lambda^\pm=0$ since each $c(d_i^\pm)=z(d_i^\pm)=1$. If $\delta^\pm\geq 1$, then it is easy to see that  $|\mathcal{A}^\pm|=(q-1)^{\delta^{\pm}-1}=\Lambda^\pm$;  note that $\prod_iz(d_i^\pm)=0$.
\item[{\bf (2)}] We make two observations. First, suppose that $W=W^+$ or $W=W^-$, that is, $\theta$ acts trivially on $W$ or inverts $W$. Then $\theta$ stabilises every subgroup of $W$, so $|\mathcal{K}_q(\mathcal{U},L)^\theta|=|\mathcal{K}_q(\mathcal{U},L)|$; moreover, in the first case each $d_i^-=0$ and therefore $\Lambda^-=0$, and in the second case each $d_i^+=0$ and therefore $\Lambda^+=0$. Since $W\ne1$, exactly one of the two products in $(\ast)$ equals $1$, so the two sides of \eqref{eq_kqut} are equal. Second, if some $H\in\mathcal{K}_q(\mathcal{U},L)$ satisfies $H\leq W^+$, then each $X_i=\pi_i(H)\leq X_i^+$ and hence $W=W^+$; analogously, $H\leq W^-$ forces $W=W^-$.
\item[{\bf (3)}] Let $L_q$ be cyclic. The case that $\theta$ acts trivially or by inversion on $W\ne 1$ is discussed in (2), so suppose that $\theta$ is neither trivial nor inversion, so $W\ne W^\pm$ and both products in $(\ast)$ are $0$; since $L_q$ is cyclic, also $\Delta_{L_q\cong C_q^2}=0$, so the right side of \eqref{eq_kqut} is $0$.   If $H\in\mathcal{K}_q(\mathcal{U},L)^\theta$, then $H$ is cyclic, say $H=\langle v\rangle$. Since $\theta^2=1$, the automorphism induced by $\theta$ on $H$ has order at most $2$; as $\Aut(H)$ is cyclic, it contains a unique involution, namely inversion, so the induced automorphism is the identity or  inversion. Thus, $v\in W^+$ or $v\in W^-$, and so $H\leq W^+$ or $H\leq W^-$. Now (2) yields $W=W^+$ or $W=W^-$, a contradiction. Thus, $|\mathcal{K}_q(\mathcal{U},L)^\theta|=0$, as required.
 
\item[{\bf (4)}] Let $L_q\cong C_q^2$. The case that $\theta$ acts trivially or by inversion on $W\ne 1$ is discussed in (2). Thus, suppose that $\theta$ is neither trivial nor inversion; then $W\ne W^\pm$, both products in $(\ast)$ are $0$, and it remains to show that $|\mathcal{K}_q(\mathcal{U},L)^\theta|=\Lambda^+\Lambda^-$. Note that $H\in\mathcal{K}_q(\mathcal{U},L)$ is fixed by $\theta$ if and only if $H=H^+\oplus H^-$ with $H^\pm=H\cap W^\pm$. If there exists  $H\in\mathcal{K}_q(\mathcal{U},L)^\theta$ with $H^+=H$ or $H^-=H$, then $H\leq W^+$ or $H\leq W^-$, and (2) yields $W=W^+$ or $W=W^-$, a contradiction. Thus, every $H$ in $\mathcal{K}_q(\mathcal{U},L)^\theta$ satisfies $\dim H^+=\dim H^-=1$. Since $\pi_i(H)=\pi_i(H^+)\oplus\pi_i(H^-)\leq X_i^+\oplus X_i^-=X_i$, we have $\pi_i(H)=X_i$ for all $i$ if and only if $\pi_i(H^\pm)=X_i^\pm$ for all $i$, that is, $H^\pm\in\mathcal{A}^\pm$. These groups $H$ are exactly $A^+\oplus A^-$ with each $A^\pm\in\mathcal{A}^\pm$, and there are $|\mathcal{A}^+|\,|\mathcal{A}^-|$ of them; by (1) this number is $\Lambda^+\Lambda^-$, so  \eqref{eq_kqut} holds.
\end{iprf}
\end{proof}

We note that the determination of the $d_i^\pm$ requires no group calculations, see Remark~\ref{rem_storage}.

\subsection{The main formula for odd order: Theorem D}\label{sec_mainodd}
Based on the previous results,  we determine $\gnu(n)$ for every odd cubefree $n$ by  combining Proposition \ref{propredFF}a,b), Corollary \ref{cor_Burnside},  and Lemmas \ref{lemSizes} and \ref{lemFix}. Since $n$ is odd, $\mathcal{S}(n)=\{1\}$ in Proposition \ref{propredFF}a); this yields the formula shown in the following theorem, which is the theorem we have quoted (and discussed) in Section \ref{sec_intromain}. Recall the definitions of $\mathcal{Q}(n)$, $\mathcal{A}(n/(\ell d))$, and $\Theta(\mathcal{U})$ from Definition \ref{def_setup} and \eqref{def_ThetaU}. The sets $\mathcal{U}(p_i,e_i,L)$ and $\mathcal{K}_q(\mathcal{U},L)$ are defined in \eqref{eq_defUpel} and \eqref{eq_defKq}, respectively, and $\mathcal{K}_q(\mathcal{U},L)^\theta$ denotes the subset of $\theta$-fixed~points.
 
\begin{theorem}\label{thm_mainodd}
 The number of isomorphism types of groups of odd cubefree order $n$ is
\[\gnu(n)=\sum_{d\mid \mathcal{Q}(n)}\ \sum_{\ell\mid (n/d)}\ \sum_{L\in\mathcal{A}(n/(\ell d))}\ \sum_{\mathcal{U}}\ \frac{1}{|\Theta(\mathcal{U})|}\sum_{\theta\in\Theta(\mathcal{U})}\ \prod_{q\mid(n/(\ell d))}\big|\mathcal{K}_q(\mathcal{U},L)^{\theta}\big|,\]
where $\mathcal{U}$ runs over the projection tuples in $\prod_{i=1}^m\mathcal{U}(p_i,e_i,L)$ where  $\ell=p_1^{e_1}\ldots p_m^{e_m}$, and $q$ runs over all prime divisors of $n/(\ell d)$; the cardinalities $\big|\mathcal{K}_q(\mathcal{U},L)^{\theta}\big|$ are determined  in Lemma \ref{lemFix}.
\end{theorem}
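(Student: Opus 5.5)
The proof is an assembly of the reductions already established, so I would proceed by substitution from the outermost sum inward. First, by the Odd Order Theorem every group of odd order is solvable, and $\mathcal{S}(n)=\{1\}$ because $p(p^2-1)/2$ is even for every odd prime $p>3$. Proposition \ref{propredFF}a) therefore gives $\gnu(n)=\sum_{d\mid\mathcal{Q}(n)}\gnuFF(n/d)$. Proposition \ref{propredFF}b) applied to $n/d$ gives $\gnuFF(n/d)=\sum_{\ell\mid (n/d)}\gnuFF(n/d,\ell)$. This produces the two outer sums. Here $n/d$ is again odd and cubefree, so all results of this section apply with $n/d$ in place of $n$.

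Second, for fixed $d$ and $\ell$, I would apply Corollary \ref{cor_Burnside} to the pair $(n/d,\ell)$, with $\nu=n/(\ell d)$. This expresses $\gnuFF(n/d,\ell)$ as a sum over $L\in\mathcal{A}(n/(\ell d))$ and over projection tuples $\mathcal{U}\in\prod_i\mathcal{U}(p_i,e_i,L)$ of the Cauchy--Frobenius averages $|\Theta(\mathcal{U})|^{-1}\sum_{\theta}\prod_{q\mid\nu}|\mathcal{K}_q(\mathcal{U},L)^\theta|$. Substituting this into the double sum gives exactly the displayed formula.

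The only point I would check carefully is that the inner objects depend on $(n,d,\ell)$ only through $\nu=n/(\ell d)$ and the factorisation of $\ell$. This is needed because Notation \ref{notSetup} was phrased for a fixed $n$. Concretely:
\begin{itemize}
\item $S$ and $\Aut(S)$ depend only on $\ell$.
\item $\mathcal{K}$ consists of the subgroups of order $\nu$.
\item $\mathcal{U}(p_i,e_i,L)$ depends only on $p_i$, $e_i$ and $L$.
\end{itemize}
So the relabelling is legitimate. Degenerate cases are covered by the empty-product conventions:
\begin{itemize}
\item If $\ell=1$, then $m=0$, the only tuple is empty, and $\Theta(\mathcal{U})$ is trivial.
\item If $\nu=1$, then $\mathcal{A}(1)$ has one element, and the product over $q$ is empty and equals $1$.
\end{itemize}
In the second case the formula correctly counts the single group $S$, since then $K=1$.

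Finally, the cardinalities $|\mathcal{K}_q(\mathcal{U},L)^\theta|$ are given by Lemma \ref{lemFix}, together with Lemma \ref{lemSizes} for the unrestricted counts. This is only a pointer, so nothing further needs to be proved.

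I expect no real obstacle; the theorem is a restatement. The step most needing care is the consistent re-indexing when Section \ref{secOdd} is applied to $n/d$ rather than $n$, and confirming that $\mathcal{S}(n)=\{1\}$ for odd $n$.
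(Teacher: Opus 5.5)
Your proposal is correct and follows the paper's own argument: specialise Proposition \ref{propredFF}a,b) using $\mathcal{S}(n)=\{1\}$ for odd $n$, then substitute Corollary \ref{cor_Burnside} applied to $(n/d,\ell)$ with $\nu=n/(\ell d)$, with Lemma \ref{lemFix} supplying the fixed-point counts. Your explicit check that the inner data depend only on $\nu$ and the factorisation of $\ell$, and your treatment of the degenerate cases, make precise what the paper leaves implicit.
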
  

This formula is combinatorial in the sense of Definition~\ref{def_CF}: by Remark~\ref{rem_savegroup} the projection tuples and the group $\Theta(\mathcal{U})$ are represented by lists of integers, and Lemmas~\ref{lemSizes} and~\ref{lemFix} evaluate the cardinalities $|\mathcal{K}_q(\mathcal{U},L)^{\theta}|$ arithmetically; see Section~\ref{secComb} for a summary.

\enlargethispage{0.5cm}
\begin{example}
  Consider  $n=75=3\cdot 5^2$ with $\mathcal{Q}(n)=5$, so $d$ runs over $\{1,5\}$ and $\ell$ over the divisors of $n/d$. First, we show that only  $(d,\ell)=(5,15)$, $(1,75)$, and $(1,25)$ contribute to $\gnu(n)$: if $(d,\ell)$ is another pair with $d\mid 5$ and $\ell\mid n/d$, then we always have $\ell\in\{1,3,5,15\}$ and $\nu=n/(\ell d)>1$. The factorisation of $\ell$ implies that $|\Aut(S)|$ divides $(3-1)\cdot (5-1)=8$, which implies that $\Aut(S)$ cannot have a subgroup of (odd) order $\nu$. We see this in our formula since each set $\mathcal{U}(p_i,e_i,L)$ contains only one element (representing the  trivial subgroup), so the only tuple $\mathcal{U}$ we consider has all $j_i=d_i=0$, and so every case of Lemma \ref{lemSizes} yields $|\mathcal{K}_q(\mathcal{U},L)|=0$.

  If $(d,\ell)$ is $(5,15)$ or $(1,75)$, then $\nu=1$, so $L=1$, $\mathcal{U}=(1,1)$, and $\Theta(\mathcal{U})=1$; the product over the primes $q\mid\nu$ is empty and each term is $1$. Thus, $(5,15)$ counts the unique group with Frattini subgroup $C_5$ and Frattini quotient $C_{15}$, namely $C_{75}$; and $(1,75)$ counts the Frattini-free group $C_3\times C_5^2$.

  For $(d,\ell)=(1,25)$ we have $m=1$, $p_1=5$, $e_1=2$,  $S=C_5^2$,  $\Aut(S)=\GL_2(5)$, $\nu=3$, and so $L\cong C_3$. Since $3\nmid 5-1$ and $3\mid 5^2-1$, we only have to consider $\mathcal{U}(5,2,C_3)=\{1,\Sigma(5,3)\}$, see Lemma \ref{lemProj}. The tuple $\mathcal{U}=(1)$ contributes $0$ as above. For $\mathcal{U}=(\Sigma(5,3))$ we need to consider $q=3$, and $X_1=\Sigma(5,3)\cong C_3$ gives $j_1=d_1=1$, so  $|\mathcal{K}_3(\mathcal{U},L)|=(c(1)-z(1))/(q-1)=1$ by Lemma~\ref{lemSizes}. Moreover,  $\Theta(\mathcal{U})=\langle\theta_{U_1}\rangle$ has order $2$ and acts on $U_1$ by inversion since $3\mid 5+1$, see Lemma \ref{lemT}; thus $d_1^+=0$ and $d_1^-=1$, and Lemma \ref{lemFix} yields $|\mathcal{K}_3(\mathcal{U},L)^{\theta_{U_1}}|=(z(1)+z(0))\cdot 1=1$. Averaging over $\Theta(\mathcal{U})$ gives $\tfrac{1}{2}(1+1)=1$, and the corresponding group is $C_3\ltimes C_5^2$, the unique (up to isomorphism) nonabelian group of order $75$.

  In conclusion,  $\gnu(75)=1+1+1=3$, which agrees with GAP's {\small \tt{NumberSmallGroups}}. We emphasise that the calculation above is phrased in terms of groups for expository purposes only: in our implementation no group is ever constructed, see Remark~\ref{rem_savegroup}c) and Remark~\ref{rem_storage}.
\end{example}

\begin{remark}\label{rem_oddsplit}
 We recall a result of   Erd\H{o}s-P\'alfy \cite{erdos} that allows us to compute $\gnu(n)$ more efficiently for many odd $n$. For a cubefree $n\geq 2$, the graph $\Gamma(n)$ has  vertex set  $\pi(n)$, the set of prime divisors of $n$, and distinct $p,q\in\pi(n)$ are adjacent if and only if $p\mid q-1$, or $q\mid p-1$, or  $p\mid q+1$ and $q^2\mid n$, or  $q\mid p+1$ and $p^2\mid n$. For a subset $\mathcal{C}\subseteq \pi(n)$ we denote by $n_{\mathcal{C}}$ the largest divisor of $n$ such that $n/n_{\mathcal{C}}$ has no prime divisors in $\mathcal{C}$. If $\mathcal{C}_1,\ldots,\mathcal{C}_r$ are the connected components of $\Gamma(n)$ and $G$ is a group of order $n$, then   \cite[Corollary 2.4]{erdos} implies that  $G=G_1\times\ldots\times G_r$ where each $G_k$ is a Hall $\mathcal{C}_k$-subgroup. In particular, this implies that 
  \begin{align}\label{eq_splitgnu}\gnu(n)=\prod\nolimits_{k=1}^r\gnu(n_{\mathcal{C}_k}).
  \end{align}
  For even $n$, the graph is always connected. However, it is shown in  \cite[Corollary 4.2]{erdos} that $\Gamma(n)$ is disconnected for almost all odd $n$, so \eqref{eq_splitgnu} applies to almost all odd orders, and in particular to many orders of practical interest: for example,   $3{,}515{,}706$ of the  $4{,}753{,}758$ odd cubefree integers $n\leq 10^7$ (that is, $\approx 74\%$ of these integers) have a disconnected graph.
\end{remark}


\section{Groups of even cubefree order}\label{secEven}

\enlargethispage{0.6cm}

\noindent The case of even order is more complicated, mainly because Proposition \ref{propRed} fails and the socle complement $K$ is not necessarily abelian. Since Proposition \ref{propredFF} holds for every cubefree $n$, it remains to determine the numbers $\gnuFF(n,\ell)$. Recall that $\ell$  denotes the socle order and the  order of the complement $K$ is denoted $\nu=n/\ell$. We treat the even case by making a case distinction on whether the $2$-part of $\nu$ is $1$, $2$, or $4$. We first show that $K$ is a semidirect product $E\ltimes O$ where $E$ has order dividing $4$ and $O$ has odd order. Throughout, we write $\nu=2^t\nu_0$ where $\nu_0$ is odd.

If $t=0$, then the results of Section \ref{secOdd} hold: the only difference is that the socle might contain a direct factor $C_2$ or $C_2^2$. Note that $\Aut(C_2)=1$ and $\Aut(C_2^2)\cong \GL_2(2)=C_2\ltimes C_3$ where the outer $C_2$ is generated by the swap matrix $J$. It follows that the results in Section \ref{secProj} can be easily extended to this case. It therefore remains to consider the cases $t=1$ and $t=2$, see Lemma \ref{lemCasesEven}. 

\begin{notation}\label{notEvenN} 
  Throughout this section, $n\geq1$ is cubefree and $\ell\mid n$. We continue with  Notation \ref{notSetup}, with the following additions.
  \begin{iprf}
  \item The number  $n$ is no longer assumed to be odd, and we write $\nu=n/\ell=2^t\nu_0$ where $\nu_0$ is odd.
  \item We denote by $L$ an element of $\mathcal{A}(\nu_0)$, that is, an isomorphism type of abelian group of order $\nu_0$.
  \item   For $U\leq\GL_{e_i}(p_i)$, write $N_i(U)=N_{\GL_{e_i}(p_i)}(U)$ and $Z_i(U)=C_{\GL_{e_i}(p_i)}(U)$; recall from Lemma~\ref{lemT} that $\Theta(U)=N_i(U)/Z_i(U)$, and we write $\epsilon_U\colon N_i(U)\to\Theta(U)$ for the natural epimorphism.
 \item   For a projection tuple $\mathcal{U}=(U_1,\ldots,U_m)$ we  abbreviate $N_i=N_i(U_i)$ and $Z_i=Z_i(U_i)$, and write
\[N(\mathcal{U})=\prod\nolimits_{i=1}^mN_i\quad\text{and}\quad C(\mathcal{U})=\prod\nolimits_{i=1}^mZ_i\]for the normaliser and centraliser of $U_1\times\ldots\times U_m$ in $\Aut(S)$. (We write $Z_i$ since $C_i$ denotes a cyclic group.) We define $\epsilon\colon N(\mathcal{U})\to N(\mathcal{U})/C(\mathcal{U})=\Theta(\mathcal{U})$ componentwise, that is, $\epsilon=(\epsilon_{U_1},\ldots,\epsilon_{U_m})$.
  \end{iprf}  
\end{notation}

Let $K\in\mathcal{K}$ with projections $K_i=\pi_i(K)\leq \GL_{e_i}(p_i)$ as in Notation \ref{notSetup}b). Since $K$ is solvable, so is $K_i$. Moreover, $|K_i|$ divides $\nu$, hence it is cubefree, and coprime to $p_i$: if $e_i=1$, then $K_i\leq\GL_1(p_i)$ has order dividing $p_i-1$; if $e_i=2$, then $p_i^2$ divides $\ell$, so $p_i\nmid\nu=n/\ell$ because $n$ is cubefree.

\begin{lemma}\label{lemCasesEven}
With the previous notation, let $n$ be cubefree, $\ell\mid n$ and $t=v_2(\nu)$ where  $\nu=n/\ell$.
\begin{ithm}
\item If $t=2$, then $\ell$ is odd and each $\GL_{e_i}(p_i)\ne 1$.
\item If $t=1$ and $2\mid \ell$, then $v_2(\ell)=1$ and the Sylow $2$-subgroup of the socle is a central direct factor.
\item If $t=0$, then Definitions \ref{defCan} and \ref{def_Upc} and Lemmas \ref{lemProj}, \ref{lemT}, and \ref{lemEig} remain valid for $p=2$, and all results of Section \ref{secOdd} hold; in particular, $\gnuFF(n,\ell)$ is given by Corollary \ref{cor_Burnside}.
\end{ithm}
\end{lemma}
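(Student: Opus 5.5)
Each part reduces to elementary arithmetic with $n=\ell\nu$ cubefree and the structure of $\Aut(S)=\prod_i\GL_{e_i}(p_i)$. I treat the three parts in order.

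For a), suppose $t=2$, so $4\mid\nu$. Since $n=\ell\nu$ is cubefree and $v_2(\nu)=2$, we get $v_2(\ell)=0$, so $\ell$ is odd. Hence every $p_i$ is odd and $\GL_{e_i}(p_i)$ contains $-1\neq 1$, so $\GL_{e_i}(p_i)\neq1$. This also covers the degenerate case $m=0$ vacuously. I would add the remark that $p_i=2$, $e_i=1$ would give $\GL_1(2)=1$; this is exactly the situation excluded here.

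For b), suppose $t=1$ and $2\mid\ell$. Then $v_2(n)=v_2(\ell)+1\leq2$ forces $v_2(\ell)=1$. The socle $S$ therefore has a unique direct factor $C_2$, which is its Sylow $2$-subgroup $S_2$, and $\Aut(S)$ has the factor $\Aut(C_2)=1$. So every $K\leq\Aut(S)$ acts trivially on $S_2$, and $S_2$ is central in $K\ltimes S$. Since $S_2$ is a characteristic direct factor of $S$ (it is the Sylow $2$-subgroup of an abelian group), the decomposition $S=S_2\times\prod_{p_i\text{ odd}}C_{p_i}^{e_i}$ is $K$-invariant. It follows that $K\ltimes S=S_2\times (K\ltimes S_{2'})$, so $S_2$ is a central direct factor.

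For c), suppose $t=0$, so $\nu$ is odd; in particular $K$ has odd order and is solvable by the Odd Order Theorem (or by Burnside, given the small order). The only new feature is that some $p_i$ may equal $2$, and then $e_i\in\{1,2\}$ with $\GL_1(2)=1$ and $\GL_2(2)\cong S_3$. I check the earlier definitions and lemmas for $p=2$ directly. In Definition \ref{defCan}, $C(2,1)$ is trivial, $D(2)=1$, and $\Sigma(2)\cong C_3$ is a Sylow $3$-subgroup of $\GL_2(2)$. For Lemma \ref{lemProj}, a subgroup of odd order of $\GL_2(2)$ is $1$ or $\Sigma(2,3)$; it is irreducible with $3\mid 2^2-1$ and $3\nmid 2-1$, and it is unique up to conjugacy. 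For Lemma \ref{lemT}, $N_{\GL_2(2)}(\Sigma(2,3))=\GL_2(2)$ and the centraliser is $\Sigma(2,3)$, so $\Theta$ has order $2$ and acts by $u\mapsto u^2=u^{-1}$, consistent with $3\mid p+1$. Lemma \ref{lemEig} concerns $U$ and not $p$, so it holds unchanged. The arguments of Section~\ref{secOdd} used oddness of $p_i$ only through these lemmas, while coprimality of $|K_i|$ to $p_i$ and cubefreeness of $|K_i|$ still hold as noted before the lemma. Hence Propositions~\ref{propRed}, Corollaries~\ref{corRed} and~\ref{cor_Burnside}, and Lemmas~\ref{lemSizes} and~\ref{lemFix} all go through.

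The main obstacle is this last verification. I must confirm that no step in Section~\ref{secOdd} silently uses that $p_i$ is odd, for instance $-1\neq1$ or the existence of scalar subgroups. I would scan the proofs of Lemma~\ref{lemProj}c) and Lemma~\ref{lemT}; these are vacuous for $p=2$ since $D(2)=1$.
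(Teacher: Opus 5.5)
Your proposal is correct and follows the paper's proof. Parts a) and b) come from the cubefreeness of $n=\ell\nu$ together with $\GL_1(2)=1$, and c) is the same direct check: for $p=2$ the only relevant subgroups are $1$ and $\Sigma(2,3)\cong C_3$, which is self-centralising with normaliser $\GL_2(2)$, so $\Theta$ acts by inversion. You are more explicit than the paper in a) and b). In c), the parenthetical appeal to Burnside is unnecessary, since solvability there follows from the Odd Order Theorem or directly from the abelian projections in Lemma~\ref{lemProj}.
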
  
\begin{proof}
\begin{iprf}  
\item This follows since $\nu\ell$ is cubefree.
\item If $2\mid \ell$, then $v_2(\ell)=1$ since $\nu\ell$ is cubefree. It follows that the socle $S$ in the groups we count has  a direct factor $C_2$ with trivial automorphism group $\GL_1(2)=1$. Thus, if $K\leq \Aut(S)$ is such that $G=K\ltimes S$ is cubefree, then $G=(K\ltimes \tilde S)\times C_2$ where $S=\tilde S\times C_2$. 
\item If $t=0$, then each $K\in\mathcal{K}$ has odd order $\nu$ and $|\pi_i(K)|$ is coprime to $p_i$. The only hypothesis of Section \ref{secOdd} that is not necessarily true is that each $p_i$ is odd. If $p_i=2$, then  $\GL_{1}(p_i)$ and $D(p_i)$ are both trivial. Thus,  $\mathcal{U}(p_i,1)=\{1\}$, and the reducible part of $\mathcal{U}(p_i,2)$ is $\{1\}$. Moreover, $\Sigma(2)\cong C_3$ with normaliser  $\GL_2(2)=\langle\Sigma(2),J\rangle$ of order $6$; so here we have $\mathcal{U}(2,2)=\{1,\Sigma(2,3)\}$. This implies that Lemmas \ref{lemProj} and \ref{lemT} still hold. For example, if $U=\Sigma(2,3)$, then $C_{\GL_2(2)}(U)=\Sigma(2)$ and $N_{\GL_2(2)}(U)=\GL_2(2)$, so $\Theta(U)=\langle\theta_U\rangle$ acts by inversion.
\end{iprf}  
\end{proof}
\enlargethispage{0.7cm}
Thus, the case $t=0$ is essentially dealt with by the results in Section \ref{secOdd}. Throughout the following, we consider $t\in\{1,2\}$. In this case, by Lemma \ref{lemCasesEven}, we can assume in practice that all the prime divisors $p_i$ of the socle order $\ell$ are odd: if not, then the socle has a central direct factor $C_2$ which does not add  anything to the number of groups.

\subsection{Decomposing the  socle complement}\label{secEvenNhall}
We first show that every socle complement $K$ is a semidirect product $E\ltimes O$ where $O$ is a normal Hall $2'$-subgroup and $E$ is a Sylow $2$-subgroup.

\begin{lemma}\label{lem_kHall}
  Let $K\in\mathcal{K}$, so $K\leq \Aut(S)$ has order $\nu=2^t\nu_0$. Then $K=E\ltimes O$ where $E$ is a Sylow $2$-subgroup of $K$ and $O=O_{2'}(K)$ is a normal abelian Hall $2'$-subgroup; we have $O^g=O_{2'}(K^g)$ for every $g\in\Aut(S)$.
\end{lemma}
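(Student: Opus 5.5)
Our plan is to work column by column and then pass to the subdirect product. By Lemma~\ref{lemCasesEven} we may assume (when $t\geq 1$) that all $p_i$ relevant to the action are odd, since a central factor $C_2$ of $S$ has trivial automorphism group and does not affect $K$. Let $O$ be the set of elements of $K$ of odd order. The goal is to show that $O$ is a subgroup, that it is normal and abelian, and then to conclude $K=E\ltimes O$ by Schur--Zassenhaus (or simply because $|E||O|=|K|$ and $E\cap O=1$).

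The key step is to analyse each projection $K_i=\pi_i(K)\leq\GL_{e_i}(p_i)$, which is solvable of cubefree order coprime to $p_i$. Let $O_i$ be the set of odd-order elements of $K_i$. We want to show that $O_i$ is a normal abelian subgroup of $K_i$. For $e_i=1$ this is immediate, since $\GL_1(p_i)$ is cyclic. For $e_i=2$ we split according to reducibility. If $K_i$ is reducible, then (as $p_i\nmid|K_i|$) it is completely reducible by Maschke, so it is conjugate into $D(p_i)$, which is abelian. If $K_i$ is irreducible and imprimitive, then it lies in a conjugate of the monomial group $M(p_i)=D(p_i)\rtimes\langle J\rangle$; since $|M(p_i):D(p_i)|=2$, the odd-order elements lie in the abelian normal subgroup $K_i\cap D(p_i)$, whose odd part is characteristic. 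If $K_i$ is primitive, then its image in $\mathrm{PGL}_2(p_i)$ is cyclic, dihedral, $A_4$, $S_4$ or $A_5$. We exclude $A_5$ by solvability. For $A_4$ and $S_4$, the image contains $C_2^2$; we then argue either that the preimage has $2$-part too large for cubefreeness, or that the odd part still works, because then the odd part of $K_i$ is the scalar part times possibly $C_3$. This is the delicate point, and we expect to check it by a direct order count: a preimage of $C_2^2$ in $\GL_2$ is a nonabelian group of order at least $8$, which contradicts $v_2(|K_i|)\leq 2$. In the remaining cases $K_i$ normalises a Singer-type cyclic subgroup, and the odd-order elements lie in $K_i\cap\Sigma$ up to conjugacy, which has index at most $2$.

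With this established, every $K_i$ has a normal abelian Hall $2'$-subgroup $O_i$ of index dividing $4$. Now $K\leq K_1\times\cdots\times K_m$, and $O_1\times\cdots\times O_m$ is a normal abelian subgroup of $K_1\times\cdots\times K_m$ containing every odd-order element and having $2$-power index. Hence $O=K\cap(O_1\times\cdots\times O_m)$ is a normal abelian subgroup of $K$ of $2$-power index that contains all odd-order elements. It is therefore the normal Hall $2'$-subgroup, so $O=O_{2'}(K)$. Taking $E$ to be any Sylow $2$-subgroup gives $K=E\ltimes O$, because $E\cap O=1$ and $|E||O|=\nu$. Finally, $O_{2'}$ is characteristic and conjugation by $g\in\Aut(S)$ is an isomorphism $K\to K^g$, so $O^g=O_{2'}(K^g)$.

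We expect the main obstacle to be the primitive case for $e_i=2$, namely ruling out the $A_4$ and $S_4$ images under cubefreeness. We would first try the argument that the $2$-part of the preimage of the Klein four group is a quaternion group of order $8$, which violates cubefreeness.
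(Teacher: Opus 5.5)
Your proof is correct and has the same global shape as the paper's: a statement about each column $K_i$, then intersecting $K$ with a normal subgroup of $K_1\times\cdots\times K_m$ of $2$-power index. The difference is that you prove the column statement yourself, where the paper cites it.

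The paper takes from \cite[Lemma 5.2]{cfisom} (which rests on \cite{gl2}) an abelian normal subgroup $T_i\le K_i$ of index at most $2$. It then sets $\hat K=K\cap\prod_iT_i$ and $O=O_{2'}(\hat K)$, and gets the complement from Schur--Zassenhaus. You instead derive a normal abelian Hall $2'$-subgroup $O_i\unlhd K_i$ from Maschke, the monomial group $M(p)$, and Dickson's list of $p'$-subgroups of ${\rm PGL}_2(p)$, and intersect with $\prod_iO_i$ directly. Your complement step ($E\cap O=1$ and $|E||O|=\nu$) is also more elementary; Schur--Zassenhaus is not needed for a normal Hall subgroup with $2$-group quotient. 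The paper's route is two lines and gets finer information on $K_i$ from the citation. Yours is self-contained modulo Dickson and shows exactly where cubefreeness enters: excluding a Klein four subgroup of the projective image in the primitive case. That exclusion also removes $A_5$ without appealing to solvability.

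That exclusion is the one step to tighten. ``Nonabelian of order at least $8$'' does not give $8\mid|K_i|$, since nonabelian groups of order $12$ exist. Also, the $2$-part need not be $Q_8$. Argue as follows.
\begin{itemize}
\item Suppose $x,y\in K_i$ map to distinct commuting involutions in ${\rm PGL}_2(p)$. Then $x$ is non-scalar with $x^2=c$ scalar, so its minimal polynomial $T^2-c$ is separable ($p$ odd). Hence its centraliser in the matrix algebra is $\GF(p)[x]$.
\item If $y=a+bx$ commuted with $x$, then $y^2$ scalar would force $ab=0$, i.e.\ $y$ scalar or $y\in\GF(p)^\times x$, both impossible. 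So $[x,y]$ is a nontrivial scalar.
\item Comparing determinants gives $[x,y]=-1$. Thus $-1\in K_i\cap Z(\GL_2(p))$, and the preimage of $C_2^2$ in $K_i$ has order $4\,|K_i\cap Z(\GL_2(p))|$. This is divisible by $8$, contradicting $v_2(|K_i|)\le 2$.
\end{itemize}
Also drop your fallback that ``the odd part still works'' for an $A_4$ or $S_4$ image: it is false. There, the odd-order elements of $K_i$ map onto all $3$-elements of $A_4$, which generate $A_4$, so they cannot form a subgroup. The cubefree exclusion is therefore genuinely needed, not optional.
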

\begin{proof}
By the above, each $K_i=\pi_i(K)$ is a solvable cubefree subgroup of $\GL_{e_i}(p_i)$ of order coprime to $p_i$; now it follows from  \cite[Lemma 5.2]{cfisom} that  each  $K_i$ has an abelian normal subgroup $T_i\leq K_i$ of index at most $2$. Then $\prod_{i=1}^m T_i$ is normal in $\prod_{i=1}^mK_i$, and  $\hat K=K\cap\prod_{i=1}^mT_i$ is normal in $K$. Moreover, $K/\hat K$ embeds in $\prod_{i=1}^m(K_i/T_i)$ and therefore  is an elementary abelian $2$-group. The group $O=O_{2'}(\hat K)$ is characteristic in $\hat K$, so normal in $K$, and $|K/O|=|K/\hat K||\hat K/O|$ is a power of $2$. Thus, $O$ is a normal abelian Hall $2'$-subgroup of $K$ of order $\nu_0$. By the Schur-Zassenhaus Theorem \cite[(9.1.2)]{rob}, every Sylow $2$-subgroup $E$ of $K$ satisfies $K=E\ltimes O$. The last claim about $\Aut(S)$ follows since a normal Hall $2'$-subgroup is invariant under isomorphisms.
\end{proof}

Recall from Proposition \ref{propredFF} and  Notation \ref{notSetup}d) that we are interested in the number of $\Aut(S)$-classes in  $\mathcal{K}$. Lemma~\ref{lem_kHall} shows that if $K\in\mathcal{K}$, then $K=E\ltimes O$ where $O=O_{2'}(K)\leq K$ is characteristic in $K$ and $E\leq K$ is a Sylow $2$-subgroup (which is unique up to conjugacy). The next proposition is the main result of this section and shows that the $\Aut(S)$-classes in $\mathcal{K}$ can be computed by considering the $\Aut(S)$-orbits on 
\[\mathcal{P}=\big\{(E,O) : O\leq\Aut(S),\ |O|=\nu_0,\ E\leq N_{\Aut(S)}(O),\ |E|=2^t\big\},\]
where $\Aut(S)$ acts by simultaneous conjugation on both entries.

\begin{proposition}\label{propPairs}
The map $(E,O)\mapsto EO$ induces a bijection $\mathcal{P}/\Aut(S)\to\mathcal{K}/\Aut(S)$, so \[\gnuFF(n,\ell)=|\mathcal{P}/\Aut(S)|.\]
\end{proposition}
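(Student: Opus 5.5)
The plan is to check that the map $\Psi\colon (E,O)\mapsto EO$ sends $\mathcal{P}$ into $\mathcal{K}$, that it is $\Aut(S)$-equivariant, and that the induced map on orbits is surjective and injective. Lemma~\ref{lem_kHall} and Sylow's theorem should carry most of the argument.

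\emph{Well-definedness and equivariance.} Let $(E,O)\in\mathcal{P}$. Since $E\leq N_{\Aut(S)}(O)$, the product $EO$ is a subgroup with $O\unlhd EO$. Since $\gcd(2^t,\nu_0)=1$, we get $E\cap O=1$ and $|EO|=2^t\nu_0=\nu$. The group $O$ has odd order, so it is solvable by the Odd Order Theorem, and $EO/O\cong E$ is a $2$-group. Hence $EO$ is solvable and $EO\in\mathcal{K}$. For $g\in\Aut(S)$ we have $(E^g,O^g)\in\mathcal{P}$ and $E^gO^g=(EO)^g$. Thus $\Psi$ is equivariant and induces a map $\mathcal{P}/\Aut(S)\to\mathcal{K}/\Aut(S)$.

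\emph{Surjectivity.} Let $K\in\mathcal{K}$. By Lemma~\ref{lem_kHall}, $K=E\ltimes O$ with $E$ a Sylow $2$-subgroup and $O=O_{2'}(K)$ of order $\nu_0$. Since $O\unlhd K$, we have $E\leq N_{\Aut(S)}(O)$. So $(E,O)\in\mathcal{P}$ and $\Psi(E,O)=K$.

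\emph{Injectivity on orbits.} This is the only step needing care. Suppose $(E,O),(E',O')\in\mathcal{P}$ and $(EO)^g=E'O'$ for some $g\in\Aut(S)$. Then $O$ is a normal subgroup of $EO$ of odd order and $2$-power index, so $O=O_{2'}(EO)$; likewise $O'=O_{2'}(E'O')$. Normal $2'$-Hall subgroups are preserved under conjugation (the final claim of Lemma~\ref{lem_kHall}), so $O^g=O_{2'}((EO)^g)=O'$. Next, $E^g$ and $E'$ are both Sylow $2$-subgroups of $K'=E'O'$, so Sylow's theorem gives $k\in K'$ with $E^{gk}=E'$. Since $k\in K'\leq N_{\Aut(S)}(O')$, also $O^{gk}=O'^k=O'$. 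Hence $(E,O)^{gk}=(E',O')$, so the two pairs lie in the same $\Aut(S)$-orbit.

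Together these give the bijection, and therefore $\gnuFF(n,\ell)=|\mathcal{K}/\Aut(S)|=|\mathcal{P}/\Aut(S)|$ by Notation~\ref{notSetup}d). The main potential pitfall is identifying $O$ intrinsically as $O_{2'}(EO)$, so that the first coordinate is recovered canonically from $EO$. After that, the Sylow conjugation must be done inside $K'$, so that the correcting element $k$ also normalises $O'$.
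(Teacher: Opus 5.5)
Your proof is correct and follows the paper's argument essentially step for step. Solvability of $EO$ and membership in $\mathcal{K}$, surjectivity via Lemma~\ref{lem_kHall}, and injectivity via $O=O_{2'}(EO)$ followed by Sylow conjugation inside $E'O'$ (so that the correcting element also normalises $O'$) are exactly how the paper proceeds.
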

\begin{proof}
  If $(E,O)\in\mathcal{P}$, then $O\unlhd EO$ and $E\cap O=1$, so $EO=E\ltimes O$ is a subgroup of $\Aut(S)$ of order $\nu_0 2^t=\nu$; it is solvable since $O$ has odd order and $|E|\leq 4$, and therefore $EO\in\mathcal{K}$. By construction, the map is $\Aut(S)$-equivariant and surjective, and it maps conjugate pairs in $\mathcal{P}$ to conjugate groups in $\mathcal{K}$. Lastly, suppose $(E_1,O_1),(E_2,O_2)\in\mathcal{P}$ with $(E_1O_1)^g=E_2O_2$ for some $g\in\Aut(S)$. Then $O_1^g=O_{2'}(E_1O_1)^g=O_{2'}(E_2O_2)=O_2$; moreover,  $E_1^g$ and $E_2$ are Sylow $2$-subgroups of $E_2O_2$, hence conjugate by some $h\in E_2O_2$. Since $O_2E_2$ normalises $O_2$, the element $gh\in\Aut(S)$ conjugates  $(E_1,O_1)$ to $(E_2,O_2)$. The claim follows.
\end{proof}

\subsection{Strategy and preliminary results}\label{secStrat}
 
\noindent By Proposition \ref{propPairs}, we need to count the $\Aut(S)$-orbits on the set $\mathcal{P}$ of pairs $(E,O)$; we do this as follows. 
\begin{ithm}
\item[1)] Fix the $\Aut(S)$-class of the odd part $O$: since $|O|=\nu_0$ is odd, $O$ is one of the groups classified in Section \ref{secOdd}, with associated projection tuple $\mathcal{U}$. 
\item[2)] Count the possible $E$ (componentwise for each Sylow subgroup of the socle).
\item[3)] Count the $\Theta(\mathcal{U})$-orbits on the possible $E$.
\end{ithm}
If $t=1$, then $E$ is generated by an involution and this can essentially be dealt with by the machinery developed for the odd case. If  $t=2$, then $E$ might not be cyclic and we have to work with generators and the automorphisms of $E$. 

Throughout we use Notations \ref{notSetup} and \ref{notEvenN}. By Lemma \ref{lemCasesEven} (with $\nu_0$ instead of $\nu$) the results of Section~\ref{secOdd} apply to the subgroups of odd order $\nu_0$ of $\Aut(S)$: every $\Aut(S)$-class of such subgroups meets $\mathcal{K}(\mathcal{U},L)$ for exactly one pair $(L,\mathcal{U})$ with $L\in\mathcal{A}(\nu_0)$ and $\mathcal{U}\in\prod_{i=1}^m\mathcal{U}(p_i,e_i,L)$. Here and below, the sets of Section \ref{secOdd} are used with $\nu_0$ in place of $\nu$; for example, 
\[\mathcal{K}(\mathcal{U},L)=\{O\leq\Aut(S) : |O|=\nu_0,\ O\cong L,\ \pi_i(O)=U_i\ \text{ for } i=1,\ldots,m\}.\]
Let
\begin{align}\label{eq_mcE}\mathcal{E}_t(\mathcal{U})= \{ E\leq N(\mathcal{U}): |E|=2^t\}/C(\mathcal{U}),
\end{align}
considered as a set of $C(\mathcal{U})$-class representatives. Since  $C(\mathcal{U})=\ker\epsilon$, it follows that if $E\in\mathcal{E}_t(\mathcal{U})$ and $g\in C(\mathcal{U})$, then $\epsilon(E^g)=\epsilon(E)$; that is, the image under $\epsilon$ does not depend on the chosen class representative. An element $\xi\in\Theta(\mathcal{U})=N(\mathcal{U})/C(\mathcal{U})$ acts on $\mathcal{E}_t(\mathcal{U})$ by mapping $E$ to the class representative of  $E^\xi$. Lastly, we define
\[\mathcal{M}(\mathcal{U},L)=\big\{(E,O) : O\in\mathcal{K}(\mathcal{U},L),\; E\in\mathcal{E}_t(\mathcal{U}),\; O^{\epsilon(E)}=O\big\}.\]
This allows us to describe $\gnuFF(n,\ell)$ as follows.

\enlargethispage{0.4cm}

\begin{proposition}\label{propRedS} 
With the notation above, 
\[\gnuFF(n,\ell)=\sum\nolimits_{L\in\mathcal{A}(\nu_0)} \sum\nolimits_{\mathcal{U}} \big|\mathcal{M}(\mathcal{U},L)/\Theta(\mathcal{U})\big|,\]
where $\mathcal{U}$ runs over the tuples in $\prod_{i=1}^m\mathcal{U}(p_i,e_i,L)$.
\end{proposition}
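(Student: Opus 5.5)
Starting from Proposition~\ref{propPairs}, we have $\gnuFF(n,\ell)=|\mathcal{P}/\Aut(S)|$, so it suffices to build a bijection between $\mathcal{P}/\Aut(S)$ and the disjoint union over $(L,\mathcal{U})$ of the orbit sets $\mathcal{M}(\mathcal{U},L)/\Theta(\mathcal{U})$.

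\textbf{Step 1: normalise $O$.} Let $(E,O)\in\mathcal{P}$. Then $O$ is an abelian subgroup of $\Aut(S)$ of odd order $\nu_0$. By Lemma~\ref{lemCasesEven} and Proposition~\ref{propRed} applied with $\nu_0$ in place of $\nu$, its $\Aut(S)$-class meets $\mathcal{K}(\mathcal{U},L)$ for exactly one pair $(L,\mathcal{U})$. After conjugating, we may therefore assume $O\in\mathcal{K}(\mathcal{U},L)$, with $(L,\mathcal{U})$ an invariant of the orbit.

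\textbf{Step 2: reduce to $N(\mathcal{U})$.} Once $O\in\mathcal{K}(\mathcal{U},L)$ is fixed, we have $\pi_i(O)=U_i$ for each $i$. Hence anything normalising $O$ normalises every $U_i$, so $N_{\Aut(S)}(O)\leq N(\mathcal{U})$ and in particular $E\leq N(\mathcal{U})$.

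The residual conjugations preserving this normal form are the $g\in\Aut(S)$ with $O^g\in\mathcal{K}(\mathcal{U},L)$. I claim that every such $g$ can be modified by an element of $N_{\Aut(S)}(O)$ to lie in $N(\mathcal{U})$. Indeed, $O$ and $O^g$ both have projections $U_i$, and since the $U_i$ are canonical class representatives, $U_i^{\pi_i(g)}=U_i$. So $g\in N(\mathcal{U})$ already.

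Consequently, $\mathcal{P}/\Aut(S)$ is in bijection with the disjoint union over $(L,\mathcal{U})$ of the $N(\mathcal{U})$-orbits on
\[\mathcal{P}(\mathcal{U},L)=\{(E,O): O\in\mathcal{K}(\mathcal{U},L),\ E\leq N(\mathcal{U})\cap N_{\Aut(S)}(O),\ |E|=2^t\}.\]

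\textbf{Step 3: quotient by $C(\mathcal{U})$.} This is the main step. The subgroup $C(\mathcal{U})$ centralises $U_1\times\ldots\times U_m\geq O$, so it fixes $O$ and acts only on $E$. Taking $C(\mathcal{U})$-orbits therefore replaces $E$ by its representative in $\mathcal{E}_t(\mathcal{U})$. The normalising condition $E\leq N(O)$ then depends only on $\epsilon(E)$: the conjugation action of $x\in N(\mathcal{U})$ on subgroups of $U_1\times\ldots\times U_m$ factors through $\epsilon(x)\in\Theta(\mathcal{U})$, so it becomes $O^{\epsilon(E)}=O$.

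Hence the $C(\mathcal{U})$-orbits on $\mathcal{P}(\mathcal{U},L)$ correspond bijectively to $\mathcal{M}(\mathcal{U},L)$. Since $C(\mathcal{U})\unlhd N(\mathcal{U})$, the $N(\mathcal{U})$-orbits on $\mathcal{P}(\mathcal{U},L)$ are exactly the $\Theta(\mathcal{U})$-orbits on $\mathcal{M}(\mathcal{U},L)$. Here $\xi\in\Theta(\mathcal{U})$ acts on $O$ by its coordinatewise automorphism and on $E$ as described before the statement.

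\textbf{Expected obstacle.} The delicate point is checking that this action of $\Theta(\mathcal{U})$ on $\mathcal{M}(\mathcal{U},L)$ is well defined, i.e.\ independent of the lift of $\xi$ to $N(\mathcal{U})$ and of the chosen class representatives. This follows from normality of $C(\mathcal{U})$ and the fact that $C(\mathcal{U})$ fixes every $O$. Summing over $L\in\mathcal{A}(\nu_0)$ and over $\mathcal{U}$ then gives the stated formula.
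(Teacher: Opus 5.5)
Your proposal is correct and follows the paper's proof essentially step for step: you normalise $O$ into the unique $\mathcal{K}(\mathcal{U},L)$, show that any element conjugating one normalised pair to another lies in $N(\mathcal{U})$, and then quotient by $C(\mathcal{U})$, which fixes every $O$, to pass to $\Theta(\mathcal{U})$-orbits on $\mathcal{M}(\mathcal{U},L)$. One small correction in Step 2: the equality $U_i^{\pi_i(g)}=U_i$ follows directly from $\pi_i(O^g)=\pi_i(O)^{\pi_i(g)}$ together with $\pi_i(O^g)=U_i$, not from the $U_i$ being canonical representatives, so the detour about ``modifying $g$'' is unnecessary.
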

\begin{proof}
Abbreviate $\mathcal{P}(\mathcal{U},L)=\{(E,O)\in\mathcal{P} : O\in\mathcal{K}(\mathcal{U},L)\}$ where $\mathcal{U}=(U_1,\ldots,U_m)$. As mentioned above, every $\Aut(S)$-orbit on $\mathcal{P}$ meets $\mathcal{P}(\mathcal{U},L)$ for a unique pair $(L,\mathcal{U})$. If $(E_1,O_1),(E_2,O_2)$ in $\mathcal{P}(\mathcal{U},L)$ satisfy $(E_1,O_1)^g=(E_2,O_2)$ for some $g\in\Aut(S)$,  then for each $i$ we have  $U_i=\pi_i(O_2)=\pi_i(O_1)^{g_i}=U_i^{g_i}$, so $g\in N(\mathcal{U})$. This shows that $\mathcal{P}/\Aut(S)$ is the disjoint union of the sets $\mathcal{P}(\mathcal{U},L)/N(\mathcal{U})$, and Proposition \ref{propPairs} gives the displayed sum with $\mathcal{P}(\mathcal{U},L)/N(\mathcal{U})$ in place of $\mathcal{M}(\mathcal{U},L)/\Theta(\mathcal{U})$. Note that $C(\mathcal{U})$ acts trivially on $\mathcal{K}(\mathcal{U},L)$, thus the $C(\mathcal{U})$-orbits on $\mathcal{P}(\mathcal{U},L)$ are represented by the pairs $(E,O)$ with $E\in\mathcal{E}_t(\mathcal{U})$, and the remaining  action is that of $N(\mathcal{U})/C(\mathcal{U})=\Theta(\mathcal{U})$. Lastly, $E$ normalises $O\leq U_1\times\ldots\times U_m$ if and only if $\epsilon(E)$ stabilises $O$; the claim follows.
\end{proof}

We conclude this section with two definitions. Let $p$ be a prime, $e\in\{1,2\}$, and $U\in\mathcal{U}(p,e)$; write $C=C_{\GL_e(p)}(U)$, $N=N_{\GL_e(p)}(U)$, and $\Theta(U)=N/C$. As before, $J$ denotes the nontrivial $2\times 2$ permutation matrix. By Definition \ref{def_Upc} and Lemma \ref{lemT}, exactly one of the six cases listed below holds.
 
\begin{definition}\label{defTypes}
With the notation above, the \emph{column type} of $U$ is ($i$) if condition ($i$) holds:
\begin{ithm}
\item[\rm(0)] $p=2$ and $e=1$: here $N=C=1$.
\item[\rm(1)] $p$ is odd and $e=1$: here $N=C=\GL_1(p)$ is cyclic of order $p-1$.
\item[\rm(2)] $e=2$ and $U$ is scalar: here $N=C=\GL_2(p)$.
\item[\rm(3)] $e=2$ and $U\leq D(p)$ is non-scalar with $U^J\ne U$: here $N=C=D(p)$.
\item[\rm(4)] $e=2$ and $U\leq D(p)$ is non-scalar with $U^J=U$: here $C=D(p)$ and $N=M(p)$.
\item[\rm(5)] $e=2$ and $U=\Sigma(p,b)$ is irreducible: here $C=\Sigma(p)$ and $N=\langle\Sigma(p),\theta\rangle$ with $\theta|_{\Sigma(p)}\colon s\mapsto s^p$.
\end{ithm}  
\end{definition}

Recall that $\ker\epsilon_U=C$; thus $\Theta(U)=1$ in the types {\rm(0)--(3)}, and $|\Theta(U)|=2$ in the types (4) and (5). Since we assume that all $p_i$ are odd (see Lemma \ref{lemCasesEven}), a column of type (0) does not occur.
\enlargethispage{0.4cm}
For a subgroup $H\leq\Theta(\mathcal{U})$ we define the following set of fixed points
\[\mathcal{K}(\mathcal{U},L)^H=\{O\in \mathcal{K}(\mathcal{U},L): O^\theta=O\text{ for every $\theta\in H$}\}.\]
In Appendix \ref{secKH} we describe how to calculate $|\mathcal{K}(\mathcal{U},L)^H|$ by a combinatorial formula.

\subsection{The case $t=1$}\label{secT1}  
Recall that $\mathcal{U}=(U_1,\ldots,U_m)$ and that $C(\mathcal{U})=\prod_{i=1}^m Z_i$ and $N(\mathcal{U})=\prod_{i=1}^m N_i$ where each $Z_i=C_{\GL_{e_i}(p_i)}(U_i)$ and  $N_i=N_{\GL_{e_i}(p_i)}(U_i)$. For the case $t=1$  we consider $\mathcal{E}_1(\mathcal{U})$ as defined in \eqref{eq_mcE}: every subgroup of $N(\mathcal{U})$ of order $2$ is generated by a unique involution, so $\mathcal{E}_1(\mathcal{U})$ consists of the groups $\langle x\rangle$, where $x$ runs over a set of $C(\mathcal{U})$-class representatives of the involutions in $N(\mathcal{U})$. For $U\in\mathcal{U}(p,e)$ write $C=C_{\GL_e(p)}(U)$ and $N=N_{\GL_e(p)}(U)$, and let $\mathcal{I}(U)$ be a set of $C$-class representatives of all elements in $N$ of order dividing $2$. Since $C(\mathcal{U})$ acts componentwise on $N(\mathcal{U})$, these representatives can be chosen componentwise, that is, $x$ runs over the tuples in $\prod_{i=1}^m\mathcal{I}(U_i)$, except $x=(1,\ldots,1)$. Step 2) therefore reduces to a count inside each $\GL_{e_i}(p_i)$, and this count turns out not to depend on $p_i$. For $U\in\mathcal{U}(p,e)$ and $\xi,\psi\in\Theta(U)$ we define
\[A(U;\xi,\psi)=\big|\{\,x\in\mathcal{I}(U) : \epsilon_U(x)=\psi\ \text{ and }\ x^\xi\ \text{is $C$-conjugate to}\ x\,\}\big|;\]
here $x^\xi$ denotes the image of $x$ under conjugation by a preimage of $\xi$. These numbers determine $\gnuFF(n,\ell)$ as follows.

\begin{proposition}\label{prop_t1S}
Let $n$ be cubefree and $\ell\mid n$ with  $t=v_2(n/\ell)=1$. Then
\[\gnuFF(n,\ell)=\sum_{L\in\mathcal{A}(\nu_0)}\ \sum_{\mathcal{U}}\ \frac{1}{|\Theta(\mathcal{U})|}\sum_{\xi\in\Theta(\mathcal{U})}\ \sum_{\psi\in\Theta(\mathcal{U})}\ \Big(\prod_{i=1}^mA(U_i;\xi_i,\psi_i)\ -\ \Delta_{\psi=1}\Big)\ \big|\mathcal{K}(\mathcal{U},L)^{\langle\xi,\psi\rangle}\big|,\]
where $\mathcal{U}$ runs over the tuples in $\prod_{i=1}^m\mathcal{U}(p_i,e_i,L)$.
\end{proposition}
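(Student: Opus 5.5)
Our starting point is Proposition \ref{propRedS}: it suffices to show, for fixed $L\in\mathcal{A}(\nu_0)$ and a fixed tuple $\mathcal{U}$, that
\[\big|\mathcal{M}(\mathcal{U},L)/\Theta(\mathcal{U})\big|=\frac{1}{|\Theta(\mathcal{U})|}\sum_{\xi\in\Theta(\mathcal{U})}\fix(\xi),\]
where $\fix(\xi)$ is the number of $\xi$-fixed pairs in $\mathcal{M}(\mathcal{U},L)$. This is the Cauchy--Frobenius Lemma applied to the $\Theta(\mathcal{U})$-action on $\mathcal{M}(\mathcal{U},L)$. The acting group is elementary abelian, so it acts diagonally on pairs $(E,O)$: on $E$ by conjugation followed by choosing the $C(\mathcal{U})$-class representative, and on $O$ by the action from Section~\ref{secOdd}. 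The work is therefore to compute $\fix(\xi)$ and to check that it equals
\[\sum_{\psi}\Big(\prod_iA(U_i;\xi_i,\psi_i)-\Delta_{\psi=1}\Big)\,\big|\mathcal{K}(\mathcal{U},L)^{\langle\xi,\psi\rangle}\big|.\]

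First I will parametrise $\mathcal{E}_1(\mathcal{U})$. For $t=1$, a subgroup $E$ is $\langle x\rangle$ with $x$ an involution in $N(\mathcal{U})$. Conjugation by $C(\mathcal{U})=\prod_i Z_i$ acts componentwise, so $C(\mathcal{U})$-classes of elements of order dividing $2$ correspond bijectively to tuples in $\prod_i\mathcal{I}(U_i)$. Discarding the identity tuple leaves exactly the involutions, and distinct involutions generate distinct subgroups. Hence $\mathcal{E}_1(\mathcal{U})$ is in bijection with $\prod_i\mathcal{I}(U_i)\setminus\{(1,\ldots,1)\}$.

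Next I partition by $\psi=\epsilon(x)=(\epsilon_{U_1}(x_1),\ldots,\epsilon_{U_m}(x_m))$. The condition $O^{\epsilon(E)}=O$ is exactly $O\in\mathcal{K}(\mathcal{U},L)^{\langle\psi\rangle}$. A pair $(\langle x\rangle,O)$ is fixed by $\xi$ exactly when two things hold. The first is that $x^\xi$ is $C(\mathcal{U})$-conjugate to $x$; this is equivalent to $x_i^{\xi_i}$ being $Z_i$-conjugate to $x_i$ for every $i$, and here uniqueness of the generating involution matters, because $\langle x\rangle$ is fixed iff $x$ is. The second is that $O^\xi=O$. So for fixed $\psi$, the number of admissible $x$ is $\prod_iA(U_i;\xi_i,\psi_i)$, minus $1$ if $\psi=1$ to remove the trivial tuple. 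Note that $\epsilon(x^\xi)=\epsilon(x)$ because $\Theta(\mathcal{U})$ is abelian, so $\psi$ is constant on orbits, and $\epsilon$ does not depend on the chosen representative. The admissible $O$ are then those fixed by both $\psi$ and $\xi$, that is, $O\in\mathcal{K}(\mathcal{U},L)^{\langle\xi,\psi\rangle}$. The two conditions are independent, so $\fix(\xi)$ factors as the product of the two counts, and summing over $\psi$ gives the displayed formula.

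The main obstacle is making the action on class representatives rigorous. I need $\xi$ to act well on $C(\mathcal{U})$-classes, which holds since $C(\mathcal{U})\unlhd N(\mathcal{U})$. I also need the $C$-conjugacy condition in $A(U;\xi,\psi)$ to be independent of the preimage chosen for $\xi$, which holds because preimages differ by elements of $C$. The componentwise reduction additionally requires that $x_i=1$ is allowed in each factor, which is why $\mathcal{I}(U)$ includes the identity and why the $-\Delta_{\psi=1}$ correction appears. Once these points are checked, the formula follows directly by summing over $L$ and $\mathcal{U}$ as in Proposition~\ref{propRedS}.
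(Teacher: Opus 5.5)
Your proposal is correct and follows essentially the same route as the paper. Both arguments apply Cauchy--Frobenius to $\mathcal{M}(\mathcal{U},L)$ from Proposition \ref{propRedS}, sort $E=\langle x\rangle$ by $\psi=\epsilon(x)$, count the $\xi$-stable classes componentwise as $\prod_i A(U_i;\xi_i,\psi_i)$ (removing the identity tuple when $\psi=1$), and multiply by $|\mathcal{K}(\mathcal{U},L)^{\langle\xi,\psi\rangle}|$; your explicit well-definedness checks (that $C(\mathcal{U})$ is normal in $N(\mathcal{U})$, independence of the preimage of $\xi$, and $\epsilon(x^\xi)=\epsilon(x)$) correspond to points the paper treats in Section \ref{secStrat} or leaves implicit.
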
  
\begin{proof}
  By Proposition \ref{propRedS}, for a fixed $L$ and $\mathcal{U}$ we need to evaluate $\big|\mathcal{M}(\mathcal{U},L)/\Theta(\mathcal{U})\big|$.  The Cauchy-Frobenius Lemma \cite[Lemma 2.17]{handbook} yields
\[\big|\mathcal{M}(\mathcal{U},L)/\Theta(\mathcal{U})\big|=\frac{1}{|\Theta(\mathcal{U})|}\sum\nolimits_{\xi\in\Theta(\mathcal{U})}\big|\{(E,O)\in\mathcal{M}(\mathcal{U},L) : O^\xi=O\ \text{ and }\ E^\xi=E\}\big|,\]
where $E^\xi$ uses the $\Theta(\mathcal{U})$-action on $\mathcal{E}_1(\mathcal{U})$ defined in Section \ref{secStrat}, that is, $E^\xi=E$ requires that the image of $E$ under conjugation by a preimage of $\xi$ is $C(\mathcal{U})$-conjugate to $E$. Fix $\xi\in\Theta(\mathcal{U})$ and sort the groups $E=\langle x\rangle$ in $\mathcal{E}_1(\mathcal{U})$ by $\psi=\epsilon(x)\in\Theta(\mathcal{U})$. By the description above, those with $\epsilon(x)=\psi$ and $E^\xi=E$ correspond to the tuples $(x_1,\ldots,x_m)\in\prod_{i=1}^m\mathcal{I}(U_i)$ with each $\epsilon_{U_i}(x_i)=\psi_i$ and with each $x_i^{\xi_i}$ conjugate to $x_i$ in $Z_i$. There are $\prod_{i=1}^mA(U_i;\xi_i,\psi_i)$ of these tuples, and exactly one of them (the tuple with all $x_i=1$) has to be discarded because it does not give a subgroup of order $2$;  this tuple occurs only when $\psi=1$, which explains the term $-\Delta_{\psi=1}$. For such an $E$, the pairs $(E,O)$ that lie in $\mathcal{M}(\mathcal{U},L)$ and are fixed by $\xi$ are those with $O$ fixed by $\xi$ and by $\epsilon(E)=\langle\psi\rangle$, that is, the elements of $\mathcal{K}(\mathcal{U},L)^{\langle\xi,\psi\rangle}$. Summing over $\psi$ and $\xi$ then yields the claimed formula.
\end{proof}

It remains to evaluate the numbers $A(U;\xi,\psi)$. As announced above, they do not depend on the prime $p$, but only on the column type of $U$ introduced in Definition \ref{defTypes}.

\begin{lemma}\label{lemAtabS}
Let $U\in\mathcal{U}(p,e)$ and $\xi,\psi\in\Theta(U)$. Then $A(U;\xi,\psi)$ depends only on the column type of $U$ and is given in Table \ref{tab_Afirst}, where ``$-$''  means that $\Theta(U)=1$, so no nontrivial pair $(\xi,\psi)$ exists.
\end{lemma}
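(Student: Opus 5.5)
We go through the column types (1)–(5) of Definition \ref{defTypes} one at a time. For each type we use the explicit description of $C$ and $N$ given there. We list the elements of $N$ of order dividing $2$ up to $C$-conjugacy, read off $\epsilon_U(x)$ for each of them, and then decide whether $x^\xi$ is $C$-conjugate to $x$ for the nontrivial $\xi$. Since $p$ is odd, every computation below takes place in a group whose structure is the same for all odd $p$: a cyclic group of even order, $\GL_2(p)$ itself, the torus $D(p)$, the monomial group $M(p)$, or the normaliser of the Singer cycle. This uniformity is why the counts depend only on the type.

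\textbf{Types (1)–(3), where $\Theta(U)=1$.} Here only $\xi=\psi=1$ occurs, so $A(U;1,1)=|\mathcal{I}(U)|$.

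In type (1), $N=C$ is cyclic of even order $p-1$. It contains exactly the elements $1,-1$ of order dividing $2$, and each is its own $C$-class, so $A=2$.

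In type (2), $N=C=\GL_2(p)$. The elements of order dividing $2$ fall into the classes of $1$, $-1$ and $\diag(1,-1)$, since they are diagonalisable with eigenvalues $\pm1$. So $A=3$.

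In type (3), $N=C=D(p)$ is abelian. Its elements of order dividing $2$ are the four matrices $\diag(\pm1,\pm1)$, so $A=4$.

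\textbf{Type (4): $C=D(p)$, $N=M(p)$.} The diagonal involutions are the four $\diag(\pm1,\pm1)$, each forming its own $C$-class, and they satisfy $\epsilon=1$.

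The non-diagonal elements are $\left(\begin{smallmatrix}0&a\\b&0\end{smallmatrix}\right)$, and such an element has order $2$ exactly when $ab=1$. Conjugating by $\diag(x,y)$ sends $a\mapsto ax/y$. So the $C$-classes are represented by $a\in\GF(p)^\times$ modulo the image of $x/y$, which is all of $\GF(p)^\times$. Hence there is a single class, represented by $J$, and it satisfies $\epsilon(J)=\theta_U$.

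For the nontrivial $\xi$ we take the preimage $J$. Conjugation by $J$ swaps $\diag(1,-1)$ and $\diag(-1,1)$, fixes $\pm I$, and fixes $J$. This gives
\[A(U;1,1)=4,\quad A(U;1,\theta)=1,\quad A(U;\theta,1)=2,\quad A(U;\theta,\theta)=1.\]

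\textbf{Type (5): $C=\Sigma(p)$, $N=\langle\Sigma(p),\theta\rangle$.} The cyclic group $\Sigma(p)$ contains exactly the elements $\pm I$ of order dividing $2$. These are central, so each is its own class and is fixed by $\xi$.

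The elements of $N\setminus C$ have the form $s\theta$. Such an element is an involution exactly when $s^{1+p}=1$, that is, when $s$ lies in the norm-one subgroup of order $p+1$. Conjugating by $c\in\Sigma(p)$ gives $c^{-1}s\theta c=s c^{p-1}\theta$, so the $C$-classes correspond to the norm-one elements modulo $\{c^{p-1}\}$. That subgroup has order $(p^2-1)/(p-1)=p+1$, so again there is exactly one class. Conjugation by $\theta$ maps $\theta$ to itself, so this class is fixed. This gives
\[A(U;1,1)=2,\quad A(U;1,\theta)=1,\quad A(U;\theta,1)=2,\quad A(U;\theta,\theta)=1.\]

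\textbf{Main obstacle.} The hardest step is counting the $C$-classes of the non-central involutions in types (4) and (5). Here we must check that conjugation by $C$ acts transitively on these involutions. The key point is the surjectivity of the maps $(x,y)\mapsto x/y$ and $c\mapsto c^{p-1}$ onto the relevant subgroups, and this surjectivity is what removes any dependence on $p$.

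A second point needs care. The definition of $x^\xi$ uses a preimage of $\xi$, and different preimages differ by elements of $C$. So we must check that the answer to "is $x^\xi$ $C$-conjugate to $x$" does not depend on which preimage we choose. This holds because $C\unlhd N$: if $g'=cg$ with $c\in C$, then $x^{g'}$ and $x^g$ are $C$-conjugate, and the condition is unchanged.
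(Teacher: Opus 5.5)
Your proof is correct and follows essentially the same route as the paper. Both go type by type, list the elements of order dividing $2$ in $N$ up to $C$-conjugacy, and get the single non-central class in types (4) and (5) from the surjectivity of $(x,y)\mapsto x/y$ and of $c\mapsto c^{p-1}$ onto the norm-one subgroup. The only omission is column type (0) of Table~\ref{tab_Afirst}, which is immediate: there $N=C=1$, so $\mathcal{I}(U)=\{1\}$ and $A(U;1,1)=1$.
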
 
\begin{proof} 
In the types (0), (1), (2), (3) we have $N=C$ and $\Theta(U)=1$, so $\xi=\psi=1$ and $A(U;1,1)$ is the number of $C$-classes of elements $x\in C$ with $x^2=1$. For type (0) this is $1$. For the abelian groups $C=\GL_1(p)$ and $C=D(p)$ of types (1) and (3) the classes are singletons, and $C_{p-1}$ and $C_{p-1}\times C_{p-1}$ contain $2$ and $4$ such elements, respectively. For type (2), every $x\in\GL_2(p)$ with $x^2=1$ is diagonalisable with eigenvalues in $\{1,-1\}$, so there are three classes with representatives $\diag(1,1)$, $-\diag(1,1)$, and $\diag(1,-1)$.

Now let $U$ be of type (4), so $C=D(p)$ and $N=M(p)=\langle J\rangle\ltimes D(p)$, and $\epsilon_U(x)=1$ if and only if $x\in D(p)$. Recall that $A(U;\xi,\psi)$ counts those $x\in\mathcal{I}(U)$ with $\epsilon_U(x)=\psi$ for which $x^\xi$ is $C$-conjugate to $x$; thus for $\psi=1$ we consider  $x\in D(p)$ with $x^2=1$ and for $\psi\ne1$ we consider $x\in D(p)J$ with $x^2=1$. For $\psi=1$ these are the four elements $\diag(\pm1,\pm1)$, and since $D(p)$ is abelian each of them forms its own $C$-class, so $A(U;1,1)=4$. If $\xi\ne1$, then conjugation by a preimage of $\xi$ acts as conjugation by $J$, which interchanges the two coordinates: it fixes $\diag(1,1)$ and $\diag(-1,-1)$, and swaps $\diag(\pm1,\mp1)$; thus $A(U;\xi,1)=2$. For $\psi\ne1$ we have $x=\diag(u,v)J$ with $x^2=\diag(uv,uv)=1$, that is, $v=u^{-1}$; conjugating by $\diag(s,t)\in D(p)$ replaces $u$ by $us^{-1}t$, so these $p-1$ elements form a single $C$-class. Every $\xi\in\Theta(U)$ maps this class to itself, hence $A(U;\xi,\psi)=1$ for both $\xi=1$ and $\xi\ne1$.

Finally let $U$ be of type (5), so $C=\Sigma(p)$ is cyclic of order $p^2-1$ and $N=\langle C,\theta\rangle$, and $\epsilon_U(x)=1$ if and only if $x\in C$. For $\psi=1$ we consider $x\in C$ with $x^2=1$: these are  $x=\pm\diag(1,1)$ and they form two $C$-classes, so $A(U;1,1)=2$. Both are fixed by $\theta$ since $p$ is odd, so $A(U;\xi,1)=2$ for $\xi\ne1$. For $\psi\ne1$ we consider  $x=s\theta$ where $s\in C$ satisfies $1=x^2=s s^\theta=s^{1+p}$. These $s$ form the unique subgroup of order $p+1$ of $C$. Conjugating by $s_0\in C$ replaces $s$ by $s s_0^{p-1}$. Since the image of $s_0\mapsto s_0^{p-1}$ has order $p+1$, it follows  that there is a unique $C$-class. As for type (4), this class is mapped to itself by every $\xi\in\Theta(U)$, hence $A(U;\xi,\psi)=1$ for both $\xi=1$ and $\xi\ne1$.
\end{proof}

\begin{table} 
{\footnotesize  
\renewcommand{\arraystretch}{1.1}  
 \[\begin{array}{@{}l|cccccc@{}}
\text{type}&{\rm(0)}&{\rm(1)}&{\rm(2)}&{\rm(3)}&{\rm(4)}&{\rm(5)}\\\hline
\psi=1,\ \xi=1     &1&2&3&4&4&2\\
\psi=1,\ \xi\ne1   &-&-&-&-&2&2\\  
\psi\ne1            &-&-&-&-&1&1
 \end{array}\]}
 \caption{The value of $A(U;\xi,\psi)$ for each column type of $U$ (Definition \ref{defTypes}).}\label{tab_Afirst}
\end{table}

\subsection{The case $t=2$} We continue with the previous notation and consider $t=2$. In this case,  $\ell$ is odd by Lemma \ref{lemCasesEven}a) and the subgroups  $E$ we consider are isomorphic to $C_4$ or $C_2^2$. Recall that $\Aut(C_4)\cong C_2$ is generated by inversion and that $\Aut(C_2^2)\cong {\rm Sym}_3$ permutes the three subgroups of order $2$ in all possible ways. We briefly explain how Steps 2) and 3) of Section \ref{secStrat} will change.

Step 2) changes because $E$ is not necessarily cyclic, and is no longer determined by a single involution. Here a subgroup of $N(\mathcal{U})$ is isomorphic to $E$ if and only if it is the image of an injective homomorphism $E\to N(\mathcal{U})$, and two such homomorphisms have the same image if and only if they differ by an element of $\Aut(E)$. Since $N(\mathcal{U})=\prod_iN_i$, a homomorphism $E\to N(\mathcal{U})$ is a tuple $\phi=(\phi_1,\ldots,\phi_m)$ of homomorphisms $\phi_i\colon E\to N_i$, and it is injective if and only if $\bigcap_i\ker\phi_i=1$. Passing to $C(\mathcal{U})$-classes, we define
\[\mathcal{H}(E)=\Big\{(\phi_1,\ldots,\phi_m)\in\prod\nolimits_{i=1}^m\Hom(E,N_i)/Z_i\ :\ \bigcap\nolimits_{i=1}^m\ker\phi_i=1\Big\},\]
and taking coordinate projections yields a bijection between the groups $\tilde E\in\mathcal{E}_2(\mathcal{U})$ isomorphic to $E$ and the $\Aut(E)$-orbits on $\mathcal{H}(E)$: the orbit of $\phi$ corresponds to $\tilde E$ if and only if  $\phi$ is the tuple of $Z_i$-classes of the coordinate projections of an injective homomorphism $E\to N(\mathcal{U})$ with image $\tilde E$. Composing such a homomorphism with $\epsilon$ therefore gives
\begin{align}\label{eq_newpsi}\psi=(\epsilon_{U_1}\circ\phi_1,\ldots,\epsilon_{U_m}\circ\phi_m)\in\Hom(E,\Theta(\mathcal{U}))=\prod\nolimits_{i=1}^m\Hom(E,\Theta(U_i))
\end{align}
with  $\epsilon(\tilde E)=\psi(E)$. We abbreviate $\psi=(\psi_1,\ldots,\psi_m)$; note that $\psi$ is now a homomorphism, whereas in Section \ref{secT1} it was an element of $\Theta(\mathcal{U})$.
 
Step 3) changes because $\Aut(E)$ acts on $\mathcal{H}(E)$ as well, commuting with the action of $\Theta(\mathcal{U})$; counting orbits therefore becomes a Cauchy-Frobenius count for $\Theta(\mathcal{U})\times\Aut(E)$ rather than for $\Theta(\mathcal{U})$ alone. 

It turns out that the number `replacing' $\prod_{i=1}^mA(U_i;\xi_i,\psi_i)-\Delta_{\psi=1}$ of Proposition \ref{prop_t1S} is
\begin{align}\label{eq_newA}A(\mathcal{U};E,\xi,\alpha,\psi)=\big|\{\phi\in\mathcal{H}(E) : \epsilon_{U_i}\circ\phi_i=\psi_i\ \text{ and }\ \phi_i^{\xi_i}\circ\alpha=\phi_i\ \text{ for all }i\}\big|,
\end{align}
where  $\xi\in\Theta(\mathcal{U})$, $\alpha\in\Aut(E)$, $\psi\in\Hom(E,\Theta(\mathcal{U}))$, and the meaning of $ \phi_i^{\xi_i}\circ\alpha=\phi_i$ is explained in the following remark.

\begin{remark}\label{rem_classes}
 By definition, the elements of $\mathcal{H}(E)$ are $C$-classes of homomorphisms, so the condition  $\phi_i^{\xi_i}\circ\alpha=\phi_i$ in \eqref{eq_newA} is an equation of $Z_i$-classes. Recall that  $\phi_i^{\xi_i}$ is obtained by composing $\phi_i$ with conjugation by a preimage of $\xi_i$ in $N_i$, and choosing a different preimage  replaces $\phi_i^{\xi_i}$ by a $Z_i$-conjugate. This means that only the $Z_i$-class of $\phi_i^{\xi_i}$ is well defined. The condition $\epsilon_{U_i}\circ\phi_i=\psi_i$ does not depend on the chosen representative since $Z_i=\ker\epsilon_{U_i}$.
\end{remark}

\enlargethispage{0.6cm}

\begin{proposition}\label{prop_t2S}
Let $n$ be cubefree and $\ell\mid n$ with $t=v_2(n/\ell)=2$. Then 
\begin{equation*}
  \begin{array}{lll} 
   \gnuFF(n,\ell) = & \sum_{L\in\mathcal{A}(\nu_0)}\ \sum_{\mathcal{U}}\ \sum_{E\in\{C_2^2,C_4\}}\ \frac{1}{|\Theta(\mathcal{U})|\,|\Aut(E)|}\;\cdot \\[2ex] & \cdot\; \sum_{\xi\in\Theta(\mathcal{U})}\ \sum_{\alpha\in\Aut(E)}\ \sum_{\psi\in\Hom(E,\Theta(\mathcal{U}))}\ A(\mathcal{U};E,\xi,\alpha,\psi)\ \big|\mathcal{K}(\mathcal{U},L)^{\langle\xi,\,\psi(E)\rangle}\big|,
  \end{array}
\end{equation*}
where $\mathcal{U}$ runs over the tuples in $\prod_{i=1}^m\mathcal{U}(p_i,e_i,L)$.\end{proposition}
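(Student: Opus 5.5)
We follow the proof of Proposition \ref{prop_t1S}, replacing the count of involutions by a count of homomorphisms modulo $\Aut(E)$. By Proposition \ref{propRedS} it suffices, for fixed $L\in\mathcal{A}(\nu_0)$ and $\mathcal{U}$, to compute $|\mathcal{M}(\mathcal{U},L)/\Theta(\mathcal{U})|$. We first split $\mathcal{E}_2(\mathcal{U})$ by isomorphism type, writing $\mathcal{M}(\mathcal{U},L)=\bigsqcup_{E\in\{C_2^2,C_4\}}\mathcal{M}_E$. Here $\mathcal{M}_E$ consists of the pairs $(\tilde E,O)$ with $\tilde E\cong E$. Each $\mathcal{M}_E$ is $\Theta(\mathcal{U})$-invariant, because conjugation preserves isomorphism type.

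Next we lift $\mathcal{M}_E$ to the set
\[\widetilde{\mathcal{M}}_E=\{(\phi,O) : \phi\in\mathcal{H}(E),\ O\in\mathcal{K}(\mathcal{U},L),\ O^{\psi_\phi(E)}=O\},\]
where $\psi_\phi=(\epsilon_{U_i}\circ\phi_i)_i$ as in \eqref{eq_newpsi}. The group $\Theta(\mathcal{U})\times\Aut(E)$ acts on $\widetilde{\mathcal{M}}_E$ by $(\phi,O)^{(\xi,\alpha)}=((\phi_i^{\xi_i}\circ\alpha)_i,O^\xi)$; this action is well defined on $Z_i$-classes by Remark \ref{rem_classes}. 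The two factors commute, so this is a genuine action.

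By the bijection stated before \eqref{eq_newpsi}, $\Aut(E)$-orbits on $\mathcal{H}(E)$ correspond to groups $\tilde E\in\mathcal{E}_2(\mathcal{U})$ with $\tilde E\cong E$, and $\epsilon(\tilde E)=\psi_\phi(E)$. Precomposing with $\alpha$ does not change $\psi_\phi(E)$, so the condition $O^{\psi_\phi(E)}=O$ is $\Aut(E)$-invariant. Hence $\widetilde{\mathcal{M}}_E/\Aut(E)\cong\mathcal{M}_E$. We must also check that this bijection is $\Theta(\mathcal{U})$-equivariant. Conjugating an injective homomorphism with image $\tilde E$ by a preimage of $\xi$ gives an injective homomorphism with image $\tilde E^\xi$, so its class corresponds to the $C(\mathcal{U})$-class representative of $\tilde E^\xi$.

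It follows that $|\mathcal{M}_E/\Theta(\mathcal{U})|=|\widetilde{\mathcal{M}}_E/(\Theta(\mathcal{U})\times\Aut(E))|$. We now apply the Cauchy-Frobenius Lemma \cite[Lemma 2.17]{handbook} to the group $\Theta(\mathcal{U})\times\Aut(E)$. This produces the factor $1/(|\Theta(\mathcal{U})||\Aut(E)|)$ and the sums over $\xi$ and $\alpha$.

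For fixed $(\xi,\alpha)$ we sort the fixed points $(\phi,O)$ by $\psi=\psi_\phi\in\Hom(E,\Theta(\mathcal{U}))$. A pair $(\phi,O)$ is fixed if and only if $\phi_i^{\xi_i}\circ\alpha=\phi_i$ for all $i$ (as $Z_i$-classes), $O^\xi=O$, and $O^{\psi(E)}=O$. The conditions on $\phi$ and on $O$ are independent once $\psi$ is fixed. The number of admissible $\phi$ is $A(\mathcal{U};E,\xi,\alpha,\psi)$ by \eqref{eq_newA}, where injectivity is built into $\mathcal{H}(E)$. The number of admissible $O$ is $|\mathcal{K}(\mathcal{U},L)^{\langle\xi,\psi(E)\rangle}|$. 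Summing over $\psi$, then over $E$, $\mathcal{U}$ and $L$, gives the formula.

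The main obstacle is the equivariance step. The set $\mathcal{E}_2(\mathcal{U})$ consists of $C(\mathcal{U})$-class representatives, while $\phi$ is only a tuple of $Z_i$-classes. We must therefore check two things carefully:
\begin{itemize}
\item a stabiliser condition of the form $\tilde E^\xi=\tilde E$ in $\mathcal{E}_2(\mathcal{U})$ lifts exactly to an $\alpha$ with $\phi^\xi\circ\alpha=\phi$ componentwise up to $Z_i$-conjugacy;
\item componentwise $Z_i$-conjugacy of the coordinate homomorphisms matches $C(\mathcal{U})$-conjugacy of the images, which holds because $C(\mathcal{U})=\prod_i Z_i$ acts coordinatewise.
\end{itemize}
Using the componentwise structure of $C(\mathcal{U})$ and the uniqueness of $\alpha$ up to the stabiliser, we expect both checks to go through. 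If equivariance is established directly at the level of $\widetilde{\mathcal{M}}_E$, the orbit-count identity $|\mathcal{M}_E/\Theta(\mathcal{U})|=|\widetilde{\mathcal{M}}_E/(\Theta(\mathcal{U})\times\Aut(E))|$ is automatic, and the stabiliser issue never has to be treated separately.
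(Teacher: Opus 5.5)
Your proposal is correct and follows the paper's proof essentially step by step. Both arguments split $\mathcal{M}(\mathcal{U},L)$ by the isomorphism type of $E$, lift to pairs $(\phi,O)$ with $\phi\in\mathcal{H}(E)$, use the commuting actions of $\Theta(\mathcal{U})$ and $\Aut(E)$ together with the $\Theta(\mathcal{U})$-equivariance of the correspondence, apply Cauchy--Frobenius to $\Theta(\mathcal{U})\times\Aut(E)$, and sort the fixed points by $\psi$. As you note yourself, the two checks flagged at the end need no separate treatment once equivariance is established on $\widetilde{\mathcal{M}}_E$, since $C(\mathcal{U})=\prod_i Z_i$ acts coordinatewise by the very definition of $\mathcal{H}(E)$.
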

\begin{proof}   
By Proposition \ref{propRedS}, for a fixed $L$ and $\mathcal{U}$ we need to evaluate $\big|\mathcal{M}(\mathcal{U},L)/\Theta(\mathcal{U})\big|$. By the bijection above, the pairs $(\tilde E,O)\in\mathcal{M}(\mathcal{U},L)$ with $\tilde E\cong E$ correspond to the pairs $(\phi,O)$, where $\phi\in\mathcal{H}(E)$ is an $\Aut(E)$-orbit representative corresponding to $\tilde E$, and $O\in\mathcal{K}(\mathcal{U},L)$ is stabilised by $\psi(E)$; here $\psi=(\epsilon_{U_1}\circ\phi_1,\ldots,\epsilon_{U_m}\circ\phi_m)$ is as in \eqref{eq_newpsi}, so that $\psi(E)=\epsilon(\tilde E)$. Note that the actions of $\Theta(\mathcal{U})$ and $\Aut(E)$  on $\mathcal{H}(E)$ commute. Both actions extend to the pairs $(\phi,O)$: the group $\Theta(\mathcal{U})$ acts by $(\phi,O)\mapsto(\phi^\xi,O^\xi)$, which is well defined on $Z_i$-classes by Remark \ref{rem_classes}; the group  $\Aut(E)$ acts by $(\phi,O)\mapsto(\phi\circ\alpha,O)$, which is well defined because it leaves $O$ unchanged and because $\psi\circ\alpha$ has the same image $\psi(E)$ as $\psi$. Since the correspondence above is $\Theta(\mathcal{U})$-equivariant, it follows that $|\mathcal{M}(\mathcal{U},L)/\Theta(\mathcal{U})|$ equals the sum of the numbers of $(\Theta(\mathcal{U})\times\Aut(E))$-orbits on the pairs $(\phi,O)$, where $E$ runs over the two isomorphism types. By the Cauchy-Frobenius Lemma \cite[Lemma 2.17]{handbook}, we determine $|\mathcal{M}(\mathcal{U},L)/\Theta(\mathcal{U})|$ as
\[\sum_{E\in\{C_4,C_2^2\}}\frac{1}{|\Theta(\mathcal{U})|\,|\Aut(E)|}\sum_{\xi\in\Theta(\mathcal{U})}\ \sum_{\alpha\in\Aut(E)}\big|\{(\phi,O) : \phi^\xi\circ\alpha=\phi,\ O^\xi=O,\ \psi(E)\ \text{stabilises}\ O\}\big|.\]
For a fixed $\phi$ the occurring subgroups $O$ are exactly the ones in $\mathcal{K}(\mathcal{U},L)^{\langle\xi,\psi(E)\rangle}$, and for a fixed $\psi$ the number of relevant $\phi$ is $A(\mathcal{U};E,\xi,\alpha,\psi)$ by definition. Sorting by $\psi$  yields the claim.
\end{proof}

In Appendix \ref{secKH} we show how the numbers $A(\mathcal{U};E,\xi,\alpha,\psi)$ and cardinalities $|\mathcal{K}(\mathcal{U},L)^{\langle \xi,\psi(E)\rangle}|$ can be determined by a combinatorial calculation; these arguments are technical and mainly relevant for the implementation, which is why we have deferred them.

\subsection{The main formula for arbitrary order}\label{sec_maineven}
Combining the three cases $t=0,1,2$ with Proposition \ref{propredFF} yields our formula for arbitrary cubefree orders. Recall the definitions of $\mathcal{S}(n)$ and $\mathcal{Q}(n)$ from Definition~\ref{def_setup}.

\begin{theorem}\label{thm_maineven}
The number of isomorphism types of groups of cubefree order $n\geq 1$ is
\[\gnu(n)=\sum\nolimits_{a\in\mathcal{S}(n)}\ \sum\nolimits_{d\mid\mathcal{Q}(n/a)}\ \sum\nolimits_{\ell\mid (n/(ad))}\gnuFF\big(n/(ad),\ell\big),\]
where $\gnuFF(n/(ad),\ell)$ is given by Corollary \ref{cor_Burnside},  Proposition \ref{prop_t1S}, or Proposition \ref{prop_t2S} depending on whether the value $t= v_2(n/(ad\ell))$ is $0$, $1$, or $2$, respectively. In particular, if $n$ is odd, then $\mathcal{S}(n)=\{1\}$ and $t=0$, and this formula reduces to Theorem \ref{thm_mainodd}.
\end{theorem}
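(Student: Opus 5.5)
The theorem is an assembly of results already established, so the plan is to chain them in order. First I will invoke Proposition \ref{propredFF}a), which gives $\gnu(n)=\sum_{a\in\mathcal{S}(n)}\sum_{d\mid\mathcal{Q}(n/a)}\gnuFF(n/(ad))$ for every cubefree $n$. Then I apply Proposition \ref{propredFF}b) to the cubefree integer $n'=n/(ad)$, which splits $\gnuFF(n')$ as $\sum_{\ell\mid n'}\gnuFF(n',\ell)$. Substituting yields the displayed triple sum. This step uses only that divisors of cubefree numbers are cubefree, so both parts of the proposition apply to $n'$.

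Next I evaluate each $\gnuFF(n',\ell)$. The key observation is that $\nu=n'/\ell$ is cubefree, so $t=v_2(\nu)\in\{0,1,2\}$; this gives an exhaustive case split. For $t=0$, Lemma \ref{lemCasesEven}c) shows that the machinery of Section \ref{secOdd} applies verbatim, including when $2\mid\ell$. Hence Corollary \ref{cor_Burnside} computes $\gnuFF(n',\ell)$. For $t=1$ and $t=2$, Propositions \ref{prop_t1S} and \ref{prop_t2S} state exactly the required formula.

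The one point needing care is the reduction, noted after Lemma \ref{lemCasesEven}, to the situation where all $p_i$ are odd. This reduction is implicitly used in the $t=1$ case, so I will check it explicitly. By Lemma \ref{lemCasesEven}b), if $t=1$ and $2\mid\ell$, then $C_2$ is a central direct factor of $S$ with $\GL_1(2)=1$. It contributes a column of type (0) with $\mathcal{I}(U)=\{1\}$ and $\Theta(U)=1$, so $A(U;1,1)=1$ (Table \ref{tab_Afirst}). It therefore leaves every product in Proposition \ref{prop_t1S} unchanged, and the formula holds whether or not that column is included. For $t=2$, Lemma \ref{lemCasesEven}a) gives that $\ell$ is odd, so nothing needs checking.

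Finally, for odd $n$ no $p(p^2-1)/2$ divides $n$, because that number is even. Hence $\mathcal{S}(n)=\{1\}$, and every $t$ equals $0$. Inserting Corollary \ref{cor_Burnside} then recovers Theorem \ref{thm_mainodd} term by term. I expect no genuine obstacle here: the only subtlety is confirming that the case split on $t$ is exhaustive and that the hypotheses of each cited result, in particular solvability of the complements and the treatment of a $C_2$ socle factor, are met for every triple $(a,d,\ell)$.
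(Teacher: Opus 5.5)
Your proposal is correct and takes the same route as the paper, which obtains the theorem by combining Proposition \ref{propredFF}a,b) with the exhaustive case split $t\in\{0,1,2\}$ (Lemma \ref{lemCasesEven}c) and Corollary \ref{cor_Burnside} for $t=0$, Propositions \ref{prop_t1S} and \ref{prop_t2S} for $t=1,2$). Your explicit check that a central $C_2$ socle factor only adds a neutral type (0) column with $A(U;1,1)=1$ is exactly the reduction to odd $p_i$ that the paper makes in the remark following Lemma \ref{lemCasesEven}.
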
 

The formula in each of the three cases $t\in\{0,1,2\}$ is combinatorial in the sense of Definition \ref{def_CF}; see Section \ref{secComb} for a summary and Appendix \ref{secKH} for the technical results.

\begin{remark}\label{rem_redsq}\revnew{We briefly explain how for squarefree $n$ our formula for $\gnu(n)$ collapses to H\"older's formula \eqref{eq_holder}. Clearly $\mathcal{Q}(n)=1$ and $\mathcal{S}(n)=\{1\}$, so $\gnu(n)=\sum_{d\mid n}\gnuFF(n,d)$. Fix $d\mid n$ with prime factorisation $d=p_1\cdots p_m$, so $S\cong C_d$ and $\Aut(S)=\prod_{i=1}^mC_{p_i-1}$ is abelian. For odd $n$, Corollary \ref{cor_Burnside} shows that $\gnuFF(n,d)=\sum_\mathcal{U}\prod_{q\mid \nu} |\mathcal{K}_q(\mathcal{U},L)|$ where $\nu=n/d$ and $L\cong C_\nu$; recall that $\Theta(\mathcal{U})=1$ for every tuple. For a prime $q\mid\nu$ let $\mathcal{S}_q=\{i : p_i\equiv1\bmod q\}$, so $|\mathcal{S}_q|=w(q)$ as in \eqref{eq_holder}. Choosing a tuple   $\mathcal{U}$ amounts to choosing, for each  $q\mid\nu$, the set of those $i$ with $q$ dividing $|U_i|$, that is, a subset of $\mathcal{S}_q$.  If this subset has size $s\geq1$, there are $|\mathcal{K}_q(\mathcal{U},C_\nu)|=(q-1)^{s-1}$ corresponding subgroups  by Lemma~\ref{lemSizes}. Since a tuple is an independent choice of subset for each $q$, summing over all tuples counts $\prod_{q\mid\nu}\sum_{s=1}^{w(q)}\binom{w(q)}{s}(q-1)^{s-1}$; this yields a summand of H\"older's formula:
\[(\ast)\quad \gnuFF(n,d)=\prod\nolimits_{q\mid\nu}\ \sum\nolimits_{s=1}^{w(q)}
  \binom{w(q)}{s}(q-1)^{s-1} 
  =\prod\nolimits_{q\mid\nu}\frac{q^{w(q)}-1}{q-1}.\]
 For even $n$, the term $\gnuFF(n,d)$ is evaluated as before if $\nu$ is odd or via  Proposition \ref{prop_t1S} if  $\nu$ is even. In the latter case, the formula has an additional factor $(\prod_{i=1}^m A(U_i;\xi_i,\psi_i)-\Delta_{\psi=1})$ where  $\xi_i=\psi_i=1$ and $U_i$ has column type (1). Lemma \ref{lemAtabS} shows that each $A(U_i;\xi_i,\psi_i)=2$, so this factor becomes $2^{w(2)}-1=\sum_{s=1}^{w(2)}\binom{w(2)}{s}(2-1)^{s-1}$. We obtain H\"older's summand $(\ast)$  again, including $q=2$. H\"older's formula evaluates the closed form on the right, whereas ours calculates the sum on the left. For general cubefree $n$ the corresponding sum still factorises over the columns, but its factors involve the sets $\mathcal{U}(p,2,L)$ of Definition \ref{def_Upc} and the average over $\Theta(\mathcal{U})$, for which we know no closed form. Our implementation therefore evaluates the sum term by term also for squarefree $n$, which is why it is markedly slower than H\"older's formula. } 
\end{remark}

\begin{remark}\label{rem_closed}
  While our formula for general cubefree $n$ is combinatorial, one wonders whether there is also a closed formula similar to H\"older's result for squarefree   $n$. The results of \cite{sot} make this seem unlikely: for example, the precise formula for $\gnu(p^2qr)$ in \cite[Theorem 2.1]{sot} already has twenty summands, most of them \emph{divisibility indicators}; these indicators record congruences between the involved primes, which is essentially the data encoded by our projection tuples.  We therefore doubt that a uniform closed formula exists for general cubefree $n$; see also the discussion on \cite[p.~235]{blackburn}.
\end{remark}

\begin{remark}\label{rem_subclasses}
By the structure of our formula for $\gnu$, it is easy to derive cubefree counting formulas for solvable groups or groups with specified Frattini subgroup or socle order. Here we consider supersolvable groups, which are solvable groups that have a normal series with cyclic factors.  It follows from   \cite[Satz VI.8.6]{hupp} that a solvable $G$ is supersolvable if and only if $G/\Phi(G)=K\ltimes S$ is, where $S=\soc(G/\Phi(G))$ and $K\leq \Aut(S)$. It is now easy to see that $G$ is supersolvable if and only if $G$ is solvable and each projection $K_i\leq \GL_{e_i}(p_i)$ of $K$ with $e_i=2$ is reducible. We can count supersolvable groups by restricting the projections we allow. If $n$ is odd, then by Lemma \ref{lemProj} it suffices to remove from each set $\mathcal{U}(p,2,L)$ of \eqref{eq_defUpel} the irreducible subgroups of the Singer cycle. If $n$ is even, then each $K_i=E_i\ltimes U_i$ where $|U_i|$ is odd and $E_i$ is the projection of the Sylow $2$-subgroup $E$ of $K$. In the notation of Definition \ref{defTypes}, here we have to discard the columns of type (5), the columns of type (4) with $\epsilon(E)_i\ne1$, and the columns of type (2) for which $E_i\cong C_4$ is irreducible (which forces $p_i\equiv3\bmod 4$). Thus, a straightforward  modification of our formula for $\gnu$ allows us to count  cubefree supersolvable groups. Another small adjustment allows us to count only the C-groups \cite{cgroups,murty} of a given cubefree order $n$. Functions for these special classes of groups are provided by our implementation.
\end{remark}


\section{Asymptotic bounds: Theorems A, B, and C}\label{sec_asym}
Our formulas allow a new  asymptotic analysis of $\gnu(n)$. For cubefree order $n$, the survey in \cite[Section 24.1]{blackburn} shows that $\gnu(n)<n^8$, and it is conjectured in \cite[Conjecture 21.16]{blackburn} that $\gnu(n)<n^2$. Better bounds have recently been proved by Kumar-Venkataraman \cite{KV}, for example,  they showed that there are at most $n^4$ solvable groups of cubefree order~$n$.

The aim of this section is to prove Theorems A, B, and C.  We start with the following preliminary lemma; recall the  definition of $\mathcal{U}(p,e)$ from Definition \ref{def_Upc}.

\begin{lemma}\label{lemE4}
Let $p$ be a prime and $e\in\{1,2\}$. For $U\in\mathcal{U}(p,e)$, the number of $C_{\GL_e(p)}(U)$-conjugacy classes of subgroups of $N_{\GL_e(p)}(U)$ of order dividing $4$ is at most $19$.
\end{lemma}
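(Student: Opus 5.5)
We will argue case by case over the column types (0)--(5) of Definition \ref{defTypes}; by Definition \ref{def_Upc} and Lemma \ref{lemT}, every $U\in\mathcal{U}(p,e)$ has exactly one of these types, and in each case $C=C_{\GL_e(p)}(U)$ and $N=N_{\GL_e(p)}(U)$ are given explicitly there. The count consists of the trivial subgroup, the $C$-classes of subgroups of order $2$, and the $C$-classes of subgroups isomorphic to $C_4$ or $C_2^2$. Subgroups of order $2$ correspond bijectively to involutions, so Table~\ref{tab_Afirst} (the column $\psi=1,\xi=1$ minus the identity, plus the entry for $\psi\ne1$) already bounds the classes of subgroups of order $2$ by $3$ in every type. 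The real work is bounding the classes of subgroups of order $4$ by roughly $15$.

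\emph{Abelian $N$ (types (0), (1), (3)).} Here $N=C$ is abelian, so $C$-classes are single subgroups. For type (0), $N=1$ and the count is $1$. For type (1), $N=C_{p-1}$ is cyclic and has at most one subgroup of each order dividing $4$, giving at most $3$. For type (3), $N=D(p)\cong C_{p-1}^2$. Its subgroups of order $4$ lie in $\Omega_2(D(p))$, which is contained in $C_4^2$. Now $C_4^2$ has $3$ subgroups of order $2$, $6$ cyclic subgroups of order $4$, and $1$ subgroup $C_2^2$. The total is therefore at most $1+3+7=11$.

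\emph{Type (2): $N=C=\GL_2(p)$.} Here we count conjugacy classes of subgroups of $\GL_2(p)$ of order dividing $4$. Such subgroups have order coprime to $p$, so Maschke's theorem and elementary representation theory apply. Every abelian subgroup of order dividing $4$ is either diagonalisable over $\GF(p)$ or, for $C_4$ with $p\equiv 3\bmod 4$, irreducible inside a Singer cycle. A diagonalisable subgroup is conjugate into $D(p)$, and $D(p)$-subgroups are $\GL_2(p)$-conjugate exactly when they are $J$-related (Lemma~\ref{lemProj}c), whose conjugacy argument extends verbatim to $2$-groups with non-scalar elements). So the count is at most the number of $\langle J\rangle$-orbits on the $11$ subgroups of $\Omega_2(D(p))$ found above, plus one class of irreducible $C_4$. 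This gives at most $11+1=12$.

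\emph{Types (4) and (5).} Here $N$ is nonabelian and $|N:C|=2$. We split the subgroups of order dividing $4$ into those contained in $C$ and those not contained in $C$.

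Subgroups contained in $C$ are bounded exactly as in types (3) and (1): at most $11$ for type (4), where $C=D(p)$, and at most $3$ for type (5), where $C$ is cyclic. Since $C$ is abelian, $C$-conjugation does not merge them, so these bounds hold as stated.

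Subgroups not contained in $C$ have the form $E=\langle x\rangle$ with $x\notin C$ of order $2$ or $4$, or $E=\langle c,x\rangle$ with $c\in C$ an involution and $x\notin C$ an involution. By the proof of Lemma~\ref{lemAtabS}, the involutions in $N\setminus C$ form a single $C$-class. For elements of order $4$ in $N\setminus C$, we parametrise them as $x=sJ$ or $x=s\theta$ and track their $C$-classes:
\begin{itemize}
\item In type (4), $x=\diag(u,v)J$ has $x^2=\diag(uv,uv)$, and $C$-conjugation only changes $u$ while preserving the product $uv$. So the $C$-classes correspond to values $uv\in\GF(p)^\times$ with $(uv)^2=1$. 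This gives one class of involutions ($uv=1$) and one class of elements of order $4$ ($uv=-1$).
\item In type (5), $x=s\theta$ has $x^2=s^{p+1}$, and the $C$-classes are the cosets of the image of $s_0\mapsto s_0^{p-1}$. These are indexed by $s^{p+1}$, so there is again one class for each square root of $1$.
\end{itemize}
For the groups $\langle c,x\rangle\cong C_2^2$, we use that $c$ must commute with $x$, which leaves at most $2$ choices of $c$ up to conjugation: the scalar $-1$, and possibly $\diag(1,-1)$, which commutes with $J$ only up to sign. This yields a handful of further classes. Adding everything gives a bound of roughly $11+2+2+\text{few}$ for type (4), which should come to at most $19$, and a smaller bound for type (5).

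\textbf{Main obstacle.} The hard part is the bookkeeping in type (4): counting the $C$-classes of $C_2^2$ and $C_4$ subgroups meeting $D(p)J$ without double counting, because one subgroup can contain several elements outside $C$. The plan is to enumerate these subgroups explicitly inside $\langle\Omega_2(D(p)),J\rangle$, which has order at most $32$, and count their $D(p)$-orbits.
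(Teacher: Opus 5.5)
\textbf{Verdict: genuine gap.} Your plan for types (4) and (5) is the paper's. You split by whether $E\le C$, bound the subgroups inside $C$ by $11$, and note that the elements $x\in N\setminus C$ with $x^4=1$ form at most two $C$-classes, distinguished by $x^2=\pm I$; all of this is correct. The gap is that you never finish the count for the subgroups $\langle c,x\rangle\cong C_2^2$ with $x\notin C$. There you only say this gives ``a handful of further classes'' and that the total ``should come to at most $19$'', which is an assertion, not an argument. The one concrete claim you make there is also false: $\diag(1,-1)$ commutes with no element of $D(p)J$, because conjugating $\diag(u,v)J$ by it gives $-\diag(u,v)J$. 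As written, the bound is therefore not established in types (4) and (5). These are exactly the cases where $N$ is nonabelian and something needs proving.

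The missing step is short. A direct computation shows that $\diag(s,t)$ commutes with $\diag(u,v)J$ if and only if $s=t$. Likewise, $s_0\in\Sigma(p)$ commutes with $s\theta$ if and only if $s_0^{p-1}=1$. So in both types $C_C(x)$ is the group of scalars, whose only involution is $-I$. Hence every $E\not\le C$ is $\langle x\rangle$ or $\langle -I,x\rangle$ with $x$ from one of your two classes. That gives at most $3$ further classes, so at most $14$ in total.

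The paper avoids computing $C_C(x)$. It writes $E=\langle x,z\rangle$ with $z\in C_C(x)$ and $z^2=1$, and notes that $C_C(x)\le C$ is abelian of rank at most $2$, so there are at most $4$ such $z$. This gives at most $2\cdot 4=8$ classes outside $C$, hence $11+8=19$.

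Two smaller slips:
\begin{itemize}
\item For type (4), Table~\ref{tab_Afirst} gives $4-1+1=4$ classes of subgroups of order $2$, not at most $3$.
\item Your type (2) argument via Maschke's theorem needs $p$ odd. For $p=2$ one has $\GL_2(2)\cong{\rm Sym}_3$, and the count is trivially $2$.
\end{itemize}
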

\begin{proof}
  Write $N=N_{\GL_e(p)}(U)$ and $C=C_{\GL_e(p)}(U)$. If $e=1$, or if $e=2$ and $U$ is scalar, then $N=C=\GL_e(p)$ and the claim follows from  \cite[Lemma 5.2]{cfisom} or a direct calculation.   Now suppose $e=2$ and  $U$ is not scalar. The proof of Lemma \ref{lemT} (with Lemma \ref{lemCasesEven} for $p=2$) shows that $C\in\{D(p),\Sigma(p)\}$ is abelian of rank at most $2$, and $|N/C|\leq2$. Now let $E\leq N$ with $|E|$ dividing~$4$.

  If $E\leq C$, then its elements lie in $V=\{x\in C: x^4=1\}$. Since $V$ is abelian, $\rk(V)\leq 2$, and $|V|\leq 16$, it is easy to see that there  are at most $11$ classes of such subgroups. If $E\not\leq C$, then $E\cap C$ has index $2$ in $E$, and $E=\langle x,z\rangle$ for some $x\in N\setminus C$ with $x^4=1$ and some $z\in C_C(x)$ with $z^2=1$. We first show that there are at most two $C$-classes of such $x$.

  If $C=D(p)$, then $x=J\diag(u,v)$ where $J$ is the nontrivial permutation matrix and $\diag(u,v)\in C$. Note that  $x^2=\diag(uv,uv)$ has order dividing $2$, so $uv\in\{\pm 1\}$ and $u=\pm v^{-1}$. Thus, there are $2(p-1)$ choices for $x$. The $C$-orbit of $x$ has size $p-1$, so there are at most two $C$-classes of elements~$x$. 

  If $C=\Sigma(p)$, then $x=\theta s$ where $s\in C$ and $\theta$ has order $2$ and acts as $t\mapsto t^p$ on $\Sigma(p)$. Since $x^2=s^{1+p}$ has order dividing $2$, the element $s$ lies in the subgroup of $\Sigma(p)$ of order $\gcd(2(p+1),p^2-1)$, which divides $2(p+1)$. Note  that $x^u=xu^{1-p}$ for $u\in C$, so the $C$-classes in $N\setminus C$ correspond to the cosets of $\{u^{p-1}: u\in\Sigma(p)\}$, which is the subgroup of order $p+1$ of $\Sigma(p)$. Together, there are at most two $C$-classes of elements~$x$. 

  For both $C\in\{D(p),\Sigma(p)\}$, the group $C_C(x)$ is abelian of rank at most $2$, so it has at most $3$ involutions. Thus, there are at most $2\cdot 4=8$ classes of subgroups $E\not\leq C$. Together with the at most $11$ classes of subgroups $E\leq C$, there are at most $19$ classes if $e=2$ and $U$ is non-scalar. 
\end{proof}

We can now prove Theorem A, which establishes \cite[Conjecture 21.16]{blackburn} up to a factor $n^{o(1)}$.

\begin{proposition}\label{prop_upper}
If $n$ is cubefree, then $\gnu(n)\leq n^{2+o(1)}$.
\end{proposition}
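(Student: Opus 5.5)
Starting from Proposition~\ref{propredFF}, I bound each layer of the sum. The number of choices of $a\in\mathcal{S}(n)$, $d\mid\mathcal{Q}(n/a)$ and $\ell\mid n/(ad)$ is at most $\tau(n)^3=n^{o(1)}$, where $\tau$ is the divisor function. Since $\gnuFF(m,\ell)$ counts $\Aut(S)$-classes of subgroups, it suffices to show $\gnuFF(m,\ell)\leq m^{2+o(1)}$ for every cubefree $m$ and $\ell\mid m$. By Proposition~\ref{propPairs} and Proposition~\ref{propRedS}, it is enough to bound
\[\sum\nolimits_{L}\sum\nolimits_{\mathcal{U}}|\mathcal{M}(\mathcal{U},L)|,\]
ignoring the orbit quotient. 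Here $|\mathcal{M}(\mathcal{U},L)|\leq|\mathcal{E}_t(\mathcal{U})|\cdot|\mathcal{K}(\mathcal{U},L)|$. By Lemma~\ref{lemE4}, the classes of $E$ can be chosen columnwise, so $|\mathcal{E}_t(\mathcal{U})|\leq 19^m$. Since $m=\omega(\ell)=o(\log n)$, this factor is $n^{o(1)}$. The number of $L$ is $|\mathcal{A}(\nu_0)|\leq\tau(n)$. For the number of tuples $\mathcal{U}$, note that each $\mathcal{U}(p_i,e_i,L)$ is at most the number of odd cubefree divisors $b$ of $p_i^2-1$, together with the $J$-orbits of subgroups of $D(p_i)_{\nu_0}$. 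Each Sylow part contributes a bounded number of choices per prime $q\mid\nu_0$, so $|\mathcal{U}(p_i,e_i,L)|\leq C^{\omega(\nu_0)}$ for an absolute constant $C$, and the total is $C^{m\,\omega(\nu_0)}$.

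The step I expect to be the main obstacle is this estimate: naively, $C^{m\omega(\nu_0)}$ could be as large as $\exp((\log n/\log\log n)^2)$, which is not $n^{o(1)}$. I would avoid counting tuples and subdirect products separately. Instead I would count the whole subgroup $O$ of order $\nu_0$ directly, one Sylow at a time, via \eqref{eqSplit}. For each prime $q\mid\nu_0$ the Sylow $O_q$ (of order $q$ or $q^2$) is a subgroup of $W_q=\prod_i\Omega(\text{Sylow }q\text{ of }C_{\GL_{e_i}(p_i)})$. Restricting to the relevant $\Omega$-layer, this is an abelian $q$-group of rank at most $r_q=\sum_{i:\,q\mid p_i^{e_i}-1}e_i\cdot\epsilon$ and exponent dividing $q^2$. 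The point is that counting the subgroups of order $q^{v_q(\nu_0)}$ in $W_q$ yields at most $q^{2 r_q}$-type bounds. More precisely, the number of subgroups of order $q^{k}$ in a group of rank $r$ is at most $q^{k r}$, with $k\leq2$.

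So $|\{O\}|$, summed over all $\mathcal{U}$, is at most $\prod_{q\mid\nu_0} q^{2r_q}$ up to constants, and
\[\prod\nolimits_q q^{r_q}\leq\prod\nolimits_i\prod\nolimits_{q\mid p_i^{e_i}-1} q^{e_i}\leq\prod\nolimits_i (p_i^{e_i})^{\,?}\]
needs care. I would use that $\prod_{q\mid\nu_0,\ q\mid p_i^{e_i}-1}q^{v_q(\nu_0)}\leq\gcd(\nu_0,|\GL_{e_i}(p_i)|)$, together with a finer count: a subgroup of order $q^k$ in $W_q$ is determined by $k$ generators. The count of generators lying over a column is at most the size of the $q$-part of that column, and bounding $\prod_q|W_q|$ gives $\prod_i|\text{column }q\text{-parts}|\leq\prod_i p_i^{2e_i}\cdot$ (bounded) $\leq\ell^{2}$. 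This is the delicate point, and I would first try to organise the count so that each column contributes at most $p_i^{e_i}\cdot\nu_0^{o(1)}$ per generator.

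Combining the pieces, the count becomes $\nu_0^{O(1)}$ times the column contributions. Then, roughly,
\[|\{O\}|\leq\min\{\nu_0^{\,\omega},\ \ell^{\,2}\}\cdot n^{o(1)},\]
and balancing with $\nu_0\ell\leq n$ gives $n^{2+o(1)}$, in the spirit of the Blackburn-type argument that a subgroup is generated by $\log$-many elements. Finally I would multiply in the $n^{o(1)}$ factors from $d$, $\ell$, $a$, $L$ and $E$ to conclude.
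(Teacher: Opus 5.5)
You reduce to bounding $\sum_{L}\sum_{\mathcal{U}}|\mathcal{E}_t(\mathcal{U})|\,|\mathcal{K}(\mathcal{U},L)|$, exactly as the paper does. However, the estimate that actually produces the exponent $2$ is never established. That estimate is $\sum_{\mathcal{U}}|\mathcal{K}(\mathcal{U},L)|\leq \ell^{2}n^{o(1)}$ for each fixed $L$; it is the paper's $(\dagger)$, with bound $2^m\ell^2$. You call it ``the delicate point'', and your closing sketch does not supply it. A factor $\nu_0^{O(1)}$ in front of the column contributions would destroy the exponent, since $\nu_0$ can be a positive power of $n$. The step ``$\min\{\nu_0^{\omega},\ell^2\}$, balancing with $\nu_0\ell\leq n$'' is not an argument: with $\ell^2 n^{o(1)}$ in hand there is nothing to balance, and you do not have it.

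Your prime-by-prime set-up cannot deliver this bound either. With rank $e_i$ in every column where $q\mid p_i^{e_i}-1$, the product over $q$ of your bounds $q^{2r_q}$ gives a column with $e_i=2$ a factor of order up to $p_i^8$, so overall roughly $\ell^4$. Even with the correct ranks ($2$ if $q\mid p_i-1$, $1$ if $q\mid p_i+1$), a single column can still cost about $p_i^6$.

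The missing idea is a coupling across the primes $q$. A canonical $U_i$ lies entirely in $\GL_1(p_i)$ if $e_i=1$, and entirely in $D(p_i)$ or entirely in $\Sigma(p_i)$ if $e_i=2$. So no column has large $q$-parts both for $q\mid p_i-1$ and for $q\mid p_i+1$. One splits over these at most $2^m$ choices, which is exactly where the factor $2^m$ in $(\dagger)$ comes from.

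With that observation, your generator-counting idea closes the gap at once, and more simply than the paper does:
\begin{itemize}
\item The sets $\mathcal{K}(\mathcal{U},L)$ are pairwise disjoint.
\item Each member lies in some $Y=Y_1\times\cdots\times Y_m$ with $Y_i=\GL_1(p_i)$ or $Y_i\in\{D(p_i),\Sigma(p_i)\}$. Each such $Y_i$ is abelian of order $p_i-1$, $(p_i-1)^2$ or $p_i^2-1$, in every case less than $p_i^{e_i}$.
\item Since $\rk(L)\leq 2$, each member is generated by an ordered pair of elements of $Y$.
\end{itemize}
Hence $\sum_{\mathcal{U}}|\mathcal{K}(\mathcal{U},L)|\leq\sum_Y|Y|^2=\prod_i\sum_{Y_i}|Y_i|^2<2^m\ell^2$.

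The paper reaches the same column bound $T_i\leq(p_i-1)^4+(p_i^2-1)^2<2p_i^4$ (for $e_i=2$) by feeding the exact counts of Lemma \ref{lemSizes} into per-prime sums $\sigma_q$. The generator count needs neither those formulas nor any per-column slack.

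Separately, $|\mathcal{E}_t(\mathcal{U})|\leq 19^m$ does not follow from Lemma \ref{lemE4}. That lemma controls only the $C(\mathcal{U})$-classes of the column projections of $E$. But $E$ is a subdirect product that its projections do not determine; for example, there are many diagonal copies of $C_2^2$ inside a product of several copies of $C_2^2$. The paper multiplies by a further $16^m$. Since any bound $c^m$ suffices, this slip is harmless.
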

\begin{proof}
We first recall some number theory.  For an integer $b>1$, denote by $\omega(b)$ the number of distinct prime divisors of $b$ and by $\tau(b)$ the number of divisors of $b$. By \cite[Theorem~13.12]{apostol}, for every $\varepsilon>0$ we have $\tau(b)<2^{(1+\varepsilon)\log b/\log\log b}$ for all sufficiently large $b$; in particular, $\tau(b)=b^{o(1)}$. Since $\tau(b)\geq 2^{\omega(b)}$, this also gives $\omega(b)\leq\log_2\tau(b)=O(\log b/\log\log b)$.

  Proposition \ref{propredFF}a,b) shows that $\gnu(n)=\sum_{a\in\mathcal{S}(n)}\sum_{d\mid\mathcal{Q}(n/a)}\sum_{{\ell\mid (n/(ad))}}\gnu_\Phi(n/(ad),\ell)$. Note that each $a\in \mathcal{S}(n)$ satisfies $a=1$ or $a=p(p^2-1)/2$ for some prime divisor $p\mid n$, so $|\mathcal{S}(n)|\leq \omega(n)+1$. Thus, the number of triples $(a,d,\ell)$ is at most $(\omega(n)+1)\cdot 2^{\omega(n)}\cdot \tau(n)$ and can be bounded by $n^{o(1)}$. Since every triple $(a,d,\ell)$ occurring in that sum satisfies $n'=n/(ad)\mid n$ and $\ell\mid n'$, we have
  \[\gnu(n) \leq n^{o(1)}\max\big\{\gnu_\Phi(n',\ell) :  n'\mid n \text{ and } \ell\mid n'\big\},\]
and it remains to bound  $\gnu_\Phi(n',\ell)$ by $n^{2+o(1)}$ for every such $n'$ and $\ell$. We emphasise that all bounds established below depend on $\ell$ and $n$ only, and not on $n'$.
 
So let $n'\mid n$ and $\ell\mid n'$, and write $\nu={n'/\ell}=2^t\nu_0$ with $\nu_0$ odd; note that $t\leq 2$. Let $S$ be the isomorphism type of a socle of order $\ell=p_1^{e_1}\ldots p_m^{e_m}$, so $\Aut(S)=\prod_{i=1}^m\GL_{e_i}(p_i)$. Proposition \ref{propPairs} shows that $\gnuFF(n',\ell)$ is the number of $\Aut(S)$-orbits on the set $\mathcal{P}$ of pairs $(E,O)$ with $O\leq \Aut(S)$ of order $|O|=\nu_0$ and $E\leq N_{\Aut(S)}(O)$ of order $|E|=2^t$. Recall that each such $O$ is abelian: the proof of Proposition \ref{propPairs} shows that $EO\in\mathcal{K}$, and $O$ is a normal Hall $2'$-subgroup of $EO$, hence $O=O_{2'}(EO)$ and Lemma \ref{lem_kHall} applies.

\smallskip 

{\bf (1)} We normalise the orbit representatives. Each projection $\pi_i(O)$ of $O$ into $\GL_{e_i}(p_i)$ has cubefree odd order coprime to $p_i$, so up to $\Aut(S)$-conjugacy we can assume that each $\pi_i(O)$ is one of the representatives $U_i\in\mathcal{U}(p_i,e_i)$. (Recall from Lemma \ref{lemCasesEven} that Definition~\ref{def_Upc} also covers the case $p_i=2$; if $t\geq 1$ and $p_i=2$, then Lemma \ref{lemCasesEven} forces $e_i=1$ and $\GL_1(2)=1$.) Thus $O\in\mathcal{K}(\mathcal{U},L)$, where $\mathcal{U}=(U_1,\ldots,U_m)$ is a projection tuple and $L\in\mathcal{A}(\nu_0)$ is the isomorphism type of $O$; in particular, each $U_i$ lies in $\mathcal{U}(p_i,e_i,L)$, see \eqref{eq_defUpel}. Recall the definitions of $N_i(U_i)$ and $Z_i(U_i)$ from  Notation~\ref{notEvenN}.
Since $E$ normalises $O$, it normalises each $U_i=\pi_i(O)$, so $E\leq N(\mathcal{U})=\prod_{i=1}^mN_i(U_i)$. The group $C(\mathcal{U})=\prod_{i=1}^m Z_i(U_i)$ acts trivially on the set of subgroups of $U_1\times\ldots\times U_m$, so replacing $E$ by a $C(\mathcal{U})$-conjugate does not change  $O$ or the $\Aut(S)$-orbit of $(E,O)$. We can therefore assume that  $E\in\mathcal{E}_t(\mathcal{U})$, see \eqref{eq_mcE}. Together, we have
\[(\ast)\quad \gnuFF(n',\ell)\ \leq \sum\nolimits_{L\in\mathcal{A}(\nu_0)}\ \sum\nolimits_{\mathcal{U}}\ \big|\mathcal{E}_t(\mathcal{U})\big|\cdot\big|\mathcal{K}(\mathcal{U},L)\big|,\]
where $\mathcal{U}$ runs over the tuples in $\prod_{i=1}^m\mathcal{U}(p_i,e_i,L)$; note that $|\mathcal{A}(\nu_0)|\leq 2^{\omega(n)}=n^{o(1)}$.
 
\smallskip

{\bf (2)} We consider the groups $E$. Each projection $\pi_i(E)$ is a subgroup of $N_i(U_i)$ of order dividing $4$, and $C(\mathcal{U})$ acts columnwise. By Lemma \ref{lemE4}, there are at most $19^m$ possibilities for the tuple $(\pi_1(E),\ldots,\pi_m(E))$ up to $C(\mathcal{U})$-conjugacy. Once such a tuple is fixed, $E$ is a subgroup of $\pi_1(E)\times\ldots\times\pi_m(E)$  generated by at most two elements. Thus, there are at most $(4^m)^2=16^m$ possibilities for $E$, and so  $|\mathcal{E}_t(\mathcal{U})|\leq 304^m$. Since $m\leq\omega(n)=o(\log n)$, we have $|\mathcal{E}_t(\mathcal{U})|\leq n^{o(1)}$ for all $\mathcal{U}$ and $t$.

\smallskip

{\bf (3)} We consider the groups $O$. The bound $|\mathcal{E}_t(\mathcal{U})|\leq 304^m$ holds for every $\mathcal{U}$ and $t$, and since  $|\mathcal{A}(\nu_0)|$ is bounded by $n^{o(1)}$, it follows from  $(\ast)$ that
\[\gnuFF(n',\ell) \leq 304^m\sum\nolimits_{L\in\mathcal{A}(\nu_0)} \sum\nolimits_{\mathcal{U}}\big|\mathcal{K}(\mathcal{U},L)\big| \leq n^{o(1)}\max_{L\in\mathcal{A}(\nu_0)} \sum\nolimits_{\mathcal{U}}\big|\mathcal{K}(\mathcal{U},L)\big|.\]
Since $2^m\leq 2^{\omega(n)}=n^{o(1)}$ and $\ell\leq n$, the claim follows once we have proved that for every $L\in\mathcal{A}(\nu_0)$:
\[(\dagger)\quad \sum\nolimits_{\mathcal{U}}\big|\mathcal{K}(\mathcal{U},L)\big| \leq 2^m\ell^2.\]

{\bf (4)} We prove $(\dagger)$. Fix $L\in\mathcal{A}(\nu_0)$. We want to use Lemma \ref{lemSizes} and for this we introduce the following notation where  the functions $c$, $g$, and $r$ are given before Lemma \ref{lemSizes}. For a prime $q\mid\nu_0$ and a finite abelian $q$-group $X$ define
\[\big(F_q(X),D_q\big)=\begin{cases}\big(c(\rk(X)),\ q-1\big)&\text{if }L_q\cong C_q,\\[2pt] \big(g(v_q(|X|)),\ q(q-1)\big)&\text{if }L_q\cong C_{q^2},\\[2pt] \big(r(\rk(X)),\ |\GL_2(q)|\big)&\text{if }L_q\cong C_q^2.\end{cases}\]
Since $g(d)\geq c(d)\geq z(d)\geq 0$ for every $d\in\{0,1,2\}$, in each case of Lemma \ref{lemSizes} the terms following the leading product contribute $\leq 0$: this is clear for cyclic $L_q$, and for $L_q\cong C_q^2$ we have $(q+1)\prod_ic(d_i)\geq q\prod_iz(d_i)$. Hence Lemma \ref{lemSizes} implies that
\[\big|\mathcal{K}_q(\mathcal{U},L)\big|\ \leq\ D_q^{-1}\prod\nolimits_{i=1}^mF_q\big((U_i)_q\big).\]
The bijection \eqref{eqSplit} gives $|\mathcal{K}(\mathcal{U},L)|=\prod_{q\mid\nu_0}|\mathcal{K}_q(\mathcal{U},L)|$, and the entries of a projection tuple are chosen independently of one another. Since each  $D_q\geq1$, we therefore have the following estimate
\[\sum\nolimits_{\mathcal{U}}\big|\mathcal{K}(\mathcal{U},L)\big| \leq \prod\nolimits_{i=1}^mT_i,\qquad\text{where each}\quad T_i=\sum_{U\in\mathcal{U}(p_i,e_i,L)} \prod\nolimits_{q\mid\nu_0}F_q(U_q).\]
Since $\ell^2=\prod_{i=1}^mp_i^{2e_i}$, the estimate $(\dagger)$ is established once we prove that each $T_i\leq 2p_i^{2e_i}$. Thus, we fix one $i$ and write $p=p_i$ and $e=e_i$. Recall from Definition \ref{def_Upc} that every $U\in\mathcal{U}(p,e,L)$ lies in $\GL_1(p)$ if $e=1$, and that for $e=2$ it either lies in $D(p)$ or is one of the irreducible groups $\Sigma(p,b)$. Accordingly, we split $T_i=T^{\rm red}+T^{\rm irr}$, where $T^{\rm red}$ is the contribution of those $U$ with $e=1$, or with $e=2$ and $U\leq D(p)$, and where $T^{\rm irr}$ is the contribution of the irreducible $U$ for $e=2$.

{\bf (4.1)} Every $U$ counted by $T^{\rm red}$ is the direct product of its Sylow subgroups, so $T^{\rm red}\leq\prod_{q\mid\nu_0}\sigma_q$, where $\sigma_q$ is the sum of $F_q(X)$ over all subgroups $X$ of $\GL_1(p)_q$ and $D(p)_q$, respectively, that are quotients of $L_q$. We now make a case distinction.
 
$\bullet$\;Let $e=2$. If $q\nmid p-1$, then $\sigma_q=1$, so let $q\mid p-1$. Note that  $D(p)_q\cong C_{q^u}\times C_{q^u}$ for some $u\geq1$. It has $q+1$ subgroups of order $q$, exactly one elementary abelian subgroup of order $q^2$, and at most $q^2+q$ cyclic subgroups of order $q^2$. Since the quotients of $C_q$ are $\{1,C_q\}$, for $L_q\cong C_q$ we have
$\sigma_q\leq 1\cdot c(\rk(1))+(q+1)\cdot c(\rk(C_q))=1+(q+1)(q-1)=q^2$. Similarly, the quotients of $C_{q^2}$ and of $C_q^2$ are $\{1,C_q,C_{q^2}\}$ and $\{1,C_q,C_q^2\}$, respectively, and we have
\begin{align*} 
L_q\cong C_{q^2}:\qquad &\sigma_q\leq 1+(q+1)(q-1)+(q^2+q)\,q(q-1)=q^4,\\
L_q\cong C_q^2:\qquad &\sigma_q\leq 1+(q+1)(q^2-1)+(q^2-1)(q^2-q)=q^4.
\end{align*}
As the primes $q$ are pairwise distinct divisors of $p-1$, we conclude $T^{\rm red}\leq (p-1)^4<p^4=p^{2e}$.

$\bullet$\; If $e=1$, then the same calculation gives $\sigma_q\leq q^2$ and $T^{\rm red}\leq (p-1)^2<p^2=p^{2e}$.

{\bf (4.2)} Now let $e=2$ and consider the irreducible groups $\Sigma(p,b)$ with $b\mid p^2-1$ and $b\nmid p-1$. These groups are cyclic and pairwise distinct, so, as above, $T^{\rm irr}\leq\prod_{q\mid\nu_0}\sigma_q$ and $\sigma_q\leq q^2$ for every $q$. The relevant primes $q$ are pairwise distinct divisors of $p^2-1$, whence $T^{\rm irr}\leq (p^2-1)^2<p^4=p^{2e}$.

Together, $T_i=T^{\rm red}+T^{\rm irr}\leq 2p_i^{2e_i}$ for every $i$, which proves $(\dagger)$, and hence the proposition.  
\end{proof}

We complement Proposition \ref{prop_upper} by the following lower bound, which proves Theorem C. It shows that the exponent $2$ in Theorem A cannot be improved; the proof relies on \cite[Theorem 21.7]{blackburn}.

\begin{proposition}\label{prop_lower}
There is an infinite set $\mathcal{N}$ of cubefree integers such that $\gnu(n)\geq n^{2-o(1)}$ for $n\in\mathcal{N}$.
\end{proposition}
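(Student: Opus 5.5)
We will construct $\mathcal{N}$ explicitly and bound $\gnu(n)$ from below by counting only Frattini-free groups $K\ltimes S$ with a carefully chosen socle and an elementary abelian complement. The natural model is the classical lower bound for groups with a large abelian normal subgroup, as in \cite[Theorem 21.7]{blackburn}. There, one counts subgroups of a direct product of many copies of $\GL_1(p)$, or of diagonal groups $D(p)$, of a fixed prime exponent $q$. This yields roughly $q^{k(m-k)}$ conjugacy classes, which becomes $n^{2-o(1)}$ once $m$ and $k$ are balanced.

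Concretely, we fix a prime $q$ and choose $m$ distinct primes $p_1,\ldots,p_m\equiv 1\bmod q$; Dirichlet's theorem together with the prime number theorem for progressions ensures these can be taken reasonably small. We set $\ell=p_1^2\cdots p_m^2$ and $\nu=q^2$, with $L=C_q^2$. Then $n=q^2\ell$ is cubefree and odd, so Theorem~\ref{thm_mainodd} and Corollary~\ref{cor_Burnside} apply with $d=1$. A first attempt with $\nu=q^2$ gives only polynomially many groups in $q$, which is too small. So we instead let the complement have order $q_1^2\cdots q_k^2$ for many primes $q_j$, each dividing every $p_i-1$, and choose the $p_i\equiv 1\bmod Q$ where $Q=q_1\cdots q_k$. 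For each $q_j$, Lemma~\ref{lemSizes} with all $U_i=D(p_i)_{q_j}\cong C_{q_j}^2$ (elementary, rank $2$) gives $|\mathcal{K}_{q_j}|=\prod_i r(2)/|\GL_2(q_j)|$. This is about $q_j^{4m-4}$, since the $c$-terms vanish. Dividing by $|\Theta(\mathcal{U})|\leq 2^m$, the count is at least $2^{-m}\prod_j q_j^{4m-4}=2^{-m}Q^{4m-4}$.

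We then compare this with $n=Q^2\prod_i p_i^2$. If the $p_i$ can be taken as $Q^{1+o(1)}$, then $n=Q^{2m+2+o(m)}$. Hence $n^2=Q^{4m+4+o(m)}$, and the count $Q^{4m-4}2^{-m}$ equals $n^{2-o(1)}$ provided $m\to\infty$, so that the $\pm4$ corrections are negligible, and $2^m=Q^{o(m)}$.

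The main obstacle is the analytic step: we must produce $m\to\infty$ primes $p\equiv 1\bmod Q$ with $p\leq Q^{1+o(1)}$, uniformly as $Q$ grows. Linnik-type bounds only give $p\le Q^{L}$, which is too weak. To handle this, we would choose $m$ growing slowly and use Bombieri–Vinogradov or Brun–Titchmarsh averaging to select a good modulus $Q$ (a product of suitable primes) for which $\pi(Q^{1+\varepsilon};Q,1)\geq m$. An alternative is to invoke the existing construction behind \cite[Theorem 21.7]{blackburn} directly, which handles exactly this number-theoretic input. Finally, distinct subgroups in $\mathcal{K}(\mathcal{U},L)$ modulo $\Theta(\mathcal{U})$ give non-isomorphic groups by Proposition~\ref{propRed}, so the count is a genuine lower bound for $\gnu(n)$.
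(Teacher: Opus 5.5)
Your group-theoretic count is correct. It is essentially step (2) of the paper's proof, specialised to one modulus for all columns. Take $L=C_Q\times C_Q$ and $U_i=\{x\in D(p_i): x^Q=1\}\cong C_Q^2$; use this $Q$-torsion subgroup, not the full Sylow subgroups $D(p_i)_{q_j}$, which need not be elementary. Then each $U_i$ has type (4), Lemma~\ref{lemSizes} gives $|\mathcal{K}_{q_j}(\mathcal{U},L)|=|\GL_2(q_j)|^{m-1}$, and so $\gnu(n)\geq 2^{-m}\prod_j|\GL_2(q_j)|^{m-1}$.

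The gap is the number-theoretic input, which is the whole difficulty of the statement and which you leave unproved. You need infinitely many odd squarefree $Q$, each with $m\to\infty$ primes $p\equiv 1\bmod Q$ satisfying $p\leq Q^{1+o(1)}$. The tools you name do not give this as stated. Bombieri--Vinogradov controls moduli only up to about $x^{1/2}$, whereas here $Q=x^{1-o(1)}$, and Brun--Titchmarsh is only an upper bound. Nor does \cite[Theorem 21.7]{blackburn} give you this configuration. What one can extract from it, via the largest H\"older summand, has primes $p_i$ each congruent to $1$ modulo only \emph{some} of the $q_j$, not one modulus dividing every $p_i-1$. Your reason for abandoning $\nu=q^2$ is also mistaken: a single prime $q$ gives the same exponent. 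The real obstruction for prime $q$ is that the required input would force the largest prime factor of $p-1$ to exceed $p^{1-o(1)}$ for infinitely many primes $p$, which is open.

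The paper avoids any new analytic input by letting the modulus vary with the column. It takes $N$ from \cite[Theorem 21.7]{blackburn}, picks the largest summand $A_d\geq N^{1-o(1)}$ of \eqref{eq_holder}, and drops the factor for $q=2$. It then uses $U_i=\{x\in D(p_i):x^{b_i}=1\}$, where $b_i$ is the product of the $q_j$ dividing $p_i-1$. This gives $|\mathcal{K}_{q_j}(\mathcal{U},L)|=|\GL_2(q_j)|^{m_j-1}$ with $m_j=w(q_j)$. For $n=(q_1\cdots q_sp_1\cdots p_m)^2\leq N^2$ one gets $\gnu(n)\geq 2^{-m}N^{-o(1)}\prod_jq_j^{4(m_j-1)}\geq N^{4-o(1)}\geq n^{2-o(1)}$.

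If you prefer to keep a uniform $Q$, you can supply the missing step by switching to the small cofactor $k=(p-1)/Q$. Set $K=(\log x)^3$ and count pairs $(p,k)$ with $p\in(x/2,x]$ prime, $p\equiv 3\bmod 4$, $k\in[K,2K]$, $k\equiv 2\bmod 4$, $k\mid p-1$ and $(p-1)/k$ squarefree. Siegel--Walfisz, plus a trivial bound for large square divisors, gives $\gg\pi(x)$ such pairs. Since $Q=(p-1)/k$ takes at most $x/K$ values, pigeonhole yields an odd squarefree $Q$ with $\gg(\log x)^2$ primes $p\leq x\leq 4KQ+2$.

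In either version you must still show that the factor $\prod_j[(1-q_j^{-1})(1-q_j^{-2})]^{m-1}$, hidden in your ``about $q_j^{4m-4}$'', is $Q^{-o(m)}$. This follows, for example, from $\prod_{q\mid Q}(1-q^{-1})\gg 1/\log\log Q$.
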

\begin{proof}
  Recall the functions $c$, $r$, and $z$ from Section \ref{secStep1}. Recall from the proof of Proposition \ref{prop_upper} that $\tau(b)=b^{o(1)}$, where $\tau(b)$ is the number of divisors of the integer $b\geq 1$. We denote by $\omega(b)$ the number of distinct prime divisors of $b$.

  {\bf(1)} By \cite[Theorem 21.7]{blackburn} there is an infinite set $\mathcal{M}$ of squarefree integers with $\gnu(N)\geq N^{1-o(1)}$ for $N\in\mathcal{M}$; from each such $N$ we construct a cubefree $n=n(N)\leq N^2$. Throughout, all asymptotic statements refer to $N\to\infty$ in $\mathcal{M}$. We now consider such an $N\in\mathcal{M}$. H\"older's formula \eqref{eq_holder} exhibits $\gnu(N)=\sum_{d\mid N} A_d$ as a finite sum of $\tau(N)$ non-negative terms $A_d$; if $d=d(N)$ maximises $A_d$, then $\gnu(N)\leq \tau(N)A_d$, so $A_d\geq \gnu(N)/\tau(N)$. Since $\tau(N)=N^{o(1)}$ and $\gnu(N)\geq N^{1-o(1)}$, the term $A_d$ of this divisor $d$ satisfies  \[(\ast_1)\quad A_d=\prod\nolimits_{q\mid (N/d)}\tfrac{q^{w(q)}-1}{q-1} \geq N^{1-o(1)},\]  where $w(q)$ is as in \eqref{eq_holder} and  $q$ runs over the primes dividing $N/d$. The factor for $q=2$ is $2^{w(2)}-1\leq 2^{\omega(N)}=N^{o(1)}$, so we may delete it and assume that all primes $q$ occurring in the product are odd. Let $q_1,\ldots,q_s$ be these primes and let $p_1,\ldots,p_m$ be those primes $p\mid d$ with $p\equiv 1\bmod q_j$ for at least one $j$; since the product above is nonzero, we have $s\geq 1$ and each $w(q)\neq 0$, so that $m\geq 1$. Note that $p_1,\ldots,p_m$ are odd and pairwise distinct, and distinct from $q_1,\ldots,q_s$ as $N$ is squarefree. Write $m_j=|\{i : q_j\mid p_i-1\}|$, so that  $m_j=w(q_j)>0$. As $(q^{w}-1)/(q-1)\leq 2q^{w-1}$ for all $w\geq 1$ and $2^s\leq 2^{\omega(N)}=N^{o(1)}$, it follows from $(\ast_1)$ that
\[(\ast_2)\qquad\prod\nolimits_{j=1}^sq_j^{m_j-1}\geq  N^{1-o(1)}.\]
We set $n=(q_1\cdots q_s\,p_1\cdots p_m)^2$, which is cubefree and satisfies $n\leq N^2$.
  
{\bf(2)} We show that \[(\ast_3)\quad \gnu(n)\geq 2^{-m}\prod\nolimits_{j=1}^s\big((q_j^2-1)(q_j^2-q_j)\big)^{m_j-1}.\] Write $\kappa=q_1\cdots q_s$ and let $b_i$ be the product of those $q_j$ that divide $p_i-1$, so $b_i>1$ divides both $\kappa$ and $p_i-1$. All $q_j$ and $p_i$ are odd, hence so is $n$, and our results of Section \ref{secOdd} apply. Let $\ell=(p_1\cdots p_m)^2$ and let $S=C_{p_1}^2\times\ldots\times C_{p_m}^2$ be a socle type of order $\ell$, so that $n=\ell\kappa^2$ and $\Aut(S)=\prod_{i=1}^m\GL_2(p_i)$. Let $L=C_{\kappa}\times C_{\kappa}$ and for each $i$ define $U_i=\{x\in D(p_i) : x^{b_i}=1\}$. Then each $U_i\cong C_{b_i}\times C_{b_i}$ has odd cubefree order coprime to $p_i$ and is a quotient of $L$. By construction, $U_i$ is invariant under the swap $J$, so $U_i\in\mathcal{U}(p_i,2,L)$, see Definition \ref{def_Upc}. Since $b_i>1$, it is not scalar, hence of type (4). Thus, the tuple $\mathcal{U}=(U_1,\ldots,U_m)$ satisfies $|\Theta(\mathcal{U})|=2^m$. Now fix $q=q_j$ and use the notation of Section \ref{secStep1}: here $L_q\cong C_q^2$, and $\rk((U_i)_q)=2$  for exactly $m_j$ indices $i$, and $\rk((U_i)_q)=0$ for the other ones. Since $r(2)=(q^2-1)(q^2-q)=|\GL_2(q)|$, $r(0)=1$, and $c(2)=z(2)=0$, Lemma \ref{lemSizes} yields $|\mathcal{K}_q(\mathcal{U},L)|=\big((q^2-1)(q^2-q)\big)^{m_j-1}$. Thus \eqref{eqSplit}, Corollary \ref{corRed}, and Proposition \ref{propredFF}a,b) (with $a=d=1$) give $\gnu(n)\geq \gnuFF(n,\ell)\geq|\mathcal{K}(\mathcal{U},L)|/|\Theta(\mathcal{U})|$, which is $(\ast_3)$.
 
  {\bf(3)} Write  $M=\prod\nolimits_{j=1}^s\big[(1-q_j^{-1})(1-q_j^{-2})\big]^{m_j-1}$; since $(q^2-1)(q^2-q)=q^4(1-q^{-1})(1-q^{-2})$, the estimates $(\ast_2)$ and $(\ast_3)$ yield
\[(\ast_4)\quad \gnu(n) \geq 2^{-m}M\Big(\prod\nolimits_{j=1}^sq_j^{m_j-1}\Big)^{4} \geq 2^{-m}MN^{4-o(1)}.\]
Note that
if $0<x\leq 1/2$, then  $-\log(1-x)\leq 2x$, which yields
\[(\ast_5)\quad -\log M\leq 4\sum\nolimits_{j=1}^sm_j/q_j.\] Moreover, $\sum_{j=1}^s m_j/q_j=\sum_{i=1}^m\sum_{q_j\mid p_i-1}q_j^{-1}$, and each inner sum runs over at most $s\leq\omega(N)$ distinct primes, hence is at most $\sum_{q\leq P}1/q$ where $P$ is the $\omega(N)$-th prime.  Recall from \cite[Theorem~13.12]{apostol} that $\omega(N) = O(\log N/\log\log N)$, and so  $P=O(\log N)$ by \cite[Theorem~4.7]{apostol}. Now \cite[Theorem~4.12]{apostol} shows  that each  $\sum_{q\leq P}1/q$ is $O(\log\log\log N)$. As $m\leq\omega(N)=O(\log N/\log\log N)$, we conclude that \[\sum\nolimits_{j=1}^s m_j/q_j\leq O(\log N/ \log\log N)O(\log\log\log N)=o(\log N)\] and therefore $(\ast_5)$ implies $M=N^{-o(1)}$. Since $2^m\leq 2^{\omega(N)}=N^{o(1)}$ and $n\leq N^2$, it follows from $(\ast_4)$ that \[\gnu(n)\geq N^{4-o(1)}\geq n^{2-o(1)}.\] Since $\gnu(n)\to\infty$ as $N\to\infty$, the integers $n$ obtained in this way form an infinite set $\mathcal{N}$. 
\end{proof}

\begin{example}\label{ex_gnultn}
Our implementation (see Section \ref{secComp}) confirms that  $\gnu(n)\leq n$ for all cubefree $n\leq 10^8$, but Proposition \ref{prop_lower} shows that this  is not true in general. Indeed, a $7$-second calculation reveals that $\gnu(n)>10n$ for  $n=(1009.10091.12109.30271.40361.42379.64577.72649)^2$: we calculate
\[\gnu(n)=9696567810757330308711244727502817860233447772217755022123581571740512.\]
\revnew{The order $n= 2^2.3^2.5^2.7^2.11^2.31^2.61^2.151=2{,}881{,}270{,}049{,}219{,}100$ satisfies $\gnu(n)\approx 1.64n$; we calculate $\gnu(n)=4726416682534350$ in about $136$ seconds.}
\end{example}

The practical observation  that $\gnu(n)$ is often small is supported by the following result; it shows  that a stronger form of \cite[Conjecture~21.16]{blackburn} holds for almost all cubefree~$n$. This proves  Theorem B.\enlargethispage{0.6cm}
  
\begin{proposition}\label{prop_almostall}
For almost all cubefree $n$ (that is, for all $n$ in a set of integers that is dense in the set of all cubefree integers), we have
\[\gnu(n) \leq (\log n)^{(\log\log n)^{O(1)}};\] 
in particular, $\gnu(n)=n^{o(1)}$ for almost all cubefree $n$.
\end{proposition}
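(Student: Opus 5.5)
The plan is to rerun the counting argument from the proof of Proposition~\ref{prop_upper}, replacing the worst-case estimates by bounds that hold for a density-one set of cubefree $n$. The quantities to control are the number of triples $(a,d,\ell)$, the number of candidate complements $|\mathcal{E}_t(\mathcal{U})|\le 304^m$, $|\mathcal{A}(\nu_0)|\le 2^{\omega(n)}$, and the sum $\sum_{\mathcal{U}}|\mathcal{K}(\mathcal{U},L)|$.

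\textbf{Step 1: the combinatorial overhead.} By the Hardy--Ramanujan/Tur\'an theorem, $\omega(n)\le 2\log\log n$ for almost all $n$. This restricts to almost all cubefree $n$, because cubefree integers have positive density. For such $n$ we have $\tau(n)\le 3^{\omega(n)}$ and $2^{\omega(n)}, 304^{\omega(n)}\le (\log n)^{O(1)}$. So in the reduction $(\ast)$ of that proof, every factor other than $\sum_{\mathcal{U}}|\mathcal{K}(\mathcal{U},L)|$ is $(\log n)^{O(1)}$.

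\textbf{Step 2: the key arithmetic input.} We would invoke the result of Dietrich--Wilson~\cite{focs}, which is an Erd\H{o}s--P\'alfy-type density statement. In the form we need, it says: for almost all $n$ (hence almost all cubefree $n$), whenever primes $p,q\mid n$ satisfy $q\mid p-1$, or $q\mid p+1$ with $p^2\mid n$, we have $q\le y(n):=(\log\log n)^{O(1)}$. Heuristically this holds because the expected number of such pairs with $q>y$ is at most about $\sum_{q>y}(\log\log n)/q^{\,}$ weighted by the probability that $q\mid n$, which tends to $0$. If~\cite{focs} is phrased only for $q\mid p-1$, we would add the $p+1$ case by the same first-moment computation. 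I expect this to be the main obstacle, namely getting exactly the right statement, uniformly with the square condition $p^2\mid n$.

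\textbf{Step 3: bounding $|\mathcal{K}(\mathcal{U},L)|$.} Fix $n$ in the good set. Every prime $q$ dividing the order of some nontrivial $U_i$ divides $p_i-1$ or $p_i^2-1$ with $p_i\mid n$, so $q\le y(n)$; primes $q\mid\nu_0$ with all $(U_i)_q=1$ contribute $|\mathcal{K}_q|=0$ or the empty product $1$. From Lemma~\ref{lemSizes}, $|\mathcal{K}_q(\mathcal{U},L)|\le q^{4m}$. Moreover, as in step (4) of the proof of Proposition~\ref{prop_upper}, the number of tuples $\mathcal{U}$ is at most $\prod_i\prod_{q\le y}O(q^2)$, since each column has boundedly many Sylow choices, at most $q^2+q+2$ per $q$. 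Multiplying over the at most $\pi(y)\le y$ relevant primes $q$ and the $m\le 2\log\log n$ columns gives
\[\sum\nolimits_{\mathcal{U}}|\mathcal{K}(\mathcal{U},L)|\le \exp\big(O(m\, y\log y)\big)=\exp\big((\log\log n)^{O(1)}\big)=(\log n)^{(\log\log n)^{O(1)}}.\]

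\textbf{Step 4: conclusion.} Combining Steps 1 and 3 with Proposition~\ref{propredFF} gives $\gnu(n)\le(\log n)^{(\log\log n)^{O(1)}}$ on the good set. The final claim follows because $(\log\log n)^{C}\log\log n=o(\log n)$, so this bound is $n^{o(1)}$.
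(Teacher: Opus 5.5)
Your argument is correct in outline, but it takes a genuinely different route from the paper. The paper does not rerun the counting behind Theorem A. It takes two things from \cite{focs}: the dense set $\Upsilon$, and the structure theorem that for $n\in\Upsilon$ every group of order $n$ is $H\ltimes B$. Here $B$ is a cyclic Hall subgroup of squarefree order with at most $2\log\log n$ prime divisors, and $h=|H|\leq(\log n)^{O((\log\log n)^c)}$. Coprimality then gives $\gnu(n)\leq\gnu(h)\max_H|\Hom(H,\Aut(B))|\leq h^{K}\cdot h^{2\log\log n}$. Cubefreeness enters only through the polynomial bound $\gnu(h)\leq h^K$.

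You instead feed two density-one arithmetic facts into the estimate $(\ast)$ from the proof of Proposition~\ref{prop_upper}, together with the bounds $|\mathcal{E}_t(\mathcal{U})|\leq 304^m$ and $|\mathcal{K}_q(\mathcal{U},L)|\leq q^{4m}$. The facts are: $\omega(n)\leq 2\log\log n$, and no prime $q\mid n$ exceeds $y(n)$ when $q\mid p-1$, or $q\mid p+1$ with $p^2\mid n$, for some prime $p\mid n$. Your observation that a prime $q>y(n)$ dividing $\nu_0$ kills every term is right, and the counting of tuples and subdirect products is fine. With $y=(\log\log n)^{1+\varepsilon}$ and $\sum_{q\leq y}\log q=O(y)$, your route gives the explicit bound $\gnu(n)\leq(\log n)^{O((\log\log n)^{1+\varepsilon})}$ and avoids any black-box group-theoretic structure theorem. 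The paper's route instead buys brevity.

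The one thing you must supply yourself is Step~2. Do not attribute it to \cite{focs}: the paper uses only the density of $\Upsilon$ and the structure theorem from there. Step~2 is true and follows from your first-moment computation. However, the key input is a uniform bound $\sum_{p\leq x,\ p\equiv 1 \bmod q}1/p\ll(\log\log x)/q$ for all $q\leq x$, which needs Brun--Titchmarsh (or Pomerance's uniform estimate). The trivial bound $(1+\log x)/q$ only gives $y\approx\log x$, which is useless here.

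With the uniform bound, the proportion of $n\leq x$ admitting primes $q>y$ and $p$ with $pq\mid n$ and $q\mid p-1$ is $\ll\log\log x\sum_{q>y}q^{-2}\ll\log\log x/(y\log y)=o(1)$. The case $q\mid p+1$ with $p^2\mid n$ carries an extra factor $1/p$ and contributes only $\ll\sum_{q>y}q^{-3}$. Restricting to $n\in(\sqrt{x},x]$ handles the dependence of $y$ on $n$. Since cubefree integers have positive density, the exceptional set is also negligible among cubefree $n$.
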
  
\begin{proof} 
Let $\Upsilon\subseteq\mathbb{N}$ be the set defined in \cite[Definition II.3]{focs}, which is dense in $\mathbb{N}$ by \cite[Theorem II.4]{focs}. Since $\mathbb{N}\setminus\Upsilon$ has density $0$, while the cubefree integers have density $\approx 83\%$  by \cite[(2)]{density}, almost every cubefree integer lies in $\Upsilon$; it therefore suffices to prove the asserted bound for cubefree $n\in\Upsilon$.

So let $n\in\Upsilon$ be cubefree. It follows from  \cite[Theorem II.5]{focs} that every group $G$ of order $n$ can be decomposed as $G=H\ltimes B$, where $B$ is a cyclic Hall subgroup of $G$ whose order $b$ is squarefree with at most $2\log \log n$ prime divisors, and where the order $h$ of $H$ satisfies $h\leq(\log n)^{O((\log\log n)^{c})}$ for some constant $c$; in particular, $b$ and $h$ only depend on $n$, not on $G$. Since $h$ and $b$ are coprime, \cite[Lemma II.1]{focs} shows that the isomorphism type of $G$ is determined by that of $H$ together with the $(\Aut(H)\times\Aut(B))$-orbit of the associated homomorphism $H\to\Aut(B)$. Thus,
\[\gnu(n)\ \leq\ \gnu(h)\cdot\max\nolimits_{H}\big|\Hom(H,\Aut(B))\big|,\]
where $H$ runs over the groups of order $h$. As $b$ is squarefree, $\Aut(B)\cong\prod_{p\mid b}C_{p-1}$ is abelian, so every homomorphism $H\to\Aut(B)$ factors through $H/H'$, and therefore
\[\big|\Hom(H,\Aut(B))\big|=\prod\nolimits_{p\mid b}\big|\Hom(H/H',C_{p-1})\big|.\]
If $H/H'\cong C_{d_1}\times\ldots\times C_{d_r}$, then  $|\Hom(H/H',C_{p-1})|\leq\prod_{i=1}^rd_i\leq h$ for every $p\mid b$. Since $b$ has at most $2\log \log n$ prime divisors, we have
\[\big|\Hom(H,\Aut(B))\big|\leq h^{2\log \log n}.\] Proposition \ref{prop_upper} implies that $\gnu(h)\leq h^K$ for an absolute constant $K$, and we have $K\leq 8$ by  \cite[p.~233]{blackburn}. Thus, together, we obtain \[\gnu(n)\leq h^{K+2\log\log n}\leq(\log n)^{(\log\log n)^{O(1)}},\]and therefore $\gnu(n)=n^{o(1)}$, cf.\ \cite[p.\ 459]{focs}.
\end{proof}

\section{Computational evaluations}\label{secComp}

In Section~\ref{secComb} we first summarise why our formulas are combinatorial in the sense of Definition~\ref{def_CF}; Sections~\ref{secImpl} and~\ref{secXC} then describe the implementation and the cross-checks we have carried out. 
\enlargethispage{0.6cm}

\subsection{Combinatorial formulas}\label{secComb}
For easy reference, we summarise how the group-theoretic ingredients in our formulas can be dealt with solely by arithmetic calculations and table look-ups.  Remarks~\ref{rem_savegroup} and~\ref{rem_savegroup2} explain that running with $\mathcal{U}$ over the set of projection tuples and with $\theta$ over the group $\Theta(\mathcal{U})$ reduces to working with lists of integers; no group is ever constructed.   Lemmas~\ref{lemSizes} and~\ref{lemFix} determine the cardinalities of $\mathcal{K}_q(\mathcal{U},L)$ and $\mathcal{K}_q(\mathcal{U},L)^\theta$ via  arithmetic formulas; it follows that $\gnu(n)$ is combinatorial for odd $n$. The result for the even case now follows from Lemmas~\ref{lemAtabS}, \ref{lemFixHS}, \ref{lemHcountS}, and \ref{lemColS}, which consider the numbers $A(U;\xi,\psi)$,  $A(\mathcal{U};E,\xi,\alpha,\psi)$, and $|\mathcal{K}(\mathcal{U},L)^H|$. How the data of Remark \ref{rem_savegroup}c) is organised is the crux of an efficient evaluation.

   \begin{remark}\label{rem_storage}
     In practice, the data necessary to characterise each $U\in\mathcal{U}(p,e,L)$ is stored as one record, listing the column type of $U$, a flag recording whether $\Theta(U)$ is trivial or of order $2$, and, for each prime $q$ dividing $|L|$, the integers $j$, $d$, and $d^\pm$  that characterise $U_q$. The flag is stored per column, whereas the induced action is stored per Sylow subgroup via $d^\pm$. Since the column type is an attribute of $U$, it cannot vary with $q$; e.g., no single $U$ has type (5) for one prime and type (4) for another.  The group $\Theta(\mathcal{U})$ of \eqref{def_ThetaU} is the elementary abelian $2$-group on the flagged columns, and each $\theta\in\Theta(\mathcal{U})$ is a $0/1$-vector over these columns; this vector determines, in the $i$-th column, whether the eigenspace ranks entering Lemma~\ref{lemFix} are $(d_i,0)$ or the stored pair $(d_i^+,d_i^-)$. The stored $d^\pm$ require no group calculations: they are $(d,0)$ if $\Theta(U)=1$, and otherwise are determined by a divisibility test or by comparing the canonical generators of Remark \ref{rem_savegroup}a) with their images under $J$.
\end{remark}

In view of Remarks  \ref{rem_redsq} and \ref{rem_closed}, it is not expected that there is a closed formula for $\gnu(n)$ of the shape of  \eqref{eq_holder}; our combinatorial formula requires some calculations to evaluate $\gnu(n)$. This explains why the runtime for evaluating $\gnu(n)$ is expected to grow  with the number of prime divisors of $n$, in particular with the number of those $p,q$  with $q,p^2\mid n$ and $q\mid p^2-1$. For odd $n$, we can utilise \eqref{eq_splitgnu}, but this does not apply to even $n$; \emph{even} worse, evaluating the general formula for even $n$ (in particular, the case $t=2$ in Theorem \ref{thm_maineven}) is more involved than in the odd case.

\subsection{Implementation}\label{secImpl}
The formulas of Theorems \ref{thm_mainodd} and \ref{thm_maineven} (and of Remark \ref{rem_subclasses}) have been implemented in GAP \cite{gap}, see the footnote on page \pageref{pagelink}, and will be included in the next release of the GAP package Cubefree \cite{cubefreeGAP}. Here we comment on some practical improvements.

Evaluating our formula for $\gnu(n)$  naively leads to  considerable redundant work. For example, in Theorem \ref{thm_mainodd}, most of the data occurring in the sums does not depend on all three of $d$, $\ell$, and $L$. Our implementation  reduces this redundancy as follows. First, the set $\mathcal{U}(p,e,L)$ of \eqref{eq_defUpel} depends only on $(p,e,L)$, so it can be computed once and  reused. Second, by Lemmas \ref{lemSizes} and \ref{lemFix}, a projection $U_i$ plays a role in $|\mathcal{K}_q(\mathcal{U},L)^\theta|$ only via its $q$-part and the corresponding integers $j_i$, $d_i$, and $d_i^\pm$. The same holds for the generator $\theta_{U_i}$ of the group $\Theta(U_i)$. Distinct elements of $\mathcal{U}(p,e,L)$ frequently share these integers, so it suffices to store the distinct integer tuples with a multiplicity. Moreover, most triples $(d,\ell,L)$ contribute $0$ and  often this is  because $L_q$ cannot be embedded into $(U_1)_q\times\ldots\times (U_m)_q$ due to order or rank obstructions; our implementation tries to detect this before any tuples are formed.

\revnew{For even $n$, evaluating the inner sums over $\xi$ and $\psi$ (and $\alpha$) in Propositions~\ref{prop_t1S} and \ref{prop_t2S} is expensive because their number of terms is exponential in the number of columns with $\Theta(U_i)\neq 1$, even though most  terms are $0$. We illustrate for $t=2$ how to improve this; the case $t=1$ is similar. Every term has a factor $|\mathcal{K}(\mathcal{U},L)^H|=\prod_{q\mid\nu_0}|\mathcal{K}_q(\mathcal{U},L)^H|$ where $H=\langle\xi,\psi(E)\rangle$. Lemma \ref{lemFixHS} evaluates each $|\mathcal{K}_q(\mathcal{U},L)^H|$ by inspecting a certain set $\mathcal{Y}$ of characters $\chi\colon H\to\{1,-1\}$, and proves that $|\mathcal{K}(\mathcal{U},L)^H|=0$ if $|\mathcal{Y}|>2$ for some $q\mid\nu_0$. The construction of $\mathcal{Y}$ is column-wise, and once $|\mathcal{Y}|>2$ during that construction, one can  abort it. Our implementation exploits this and enumerates the pairs $(\xi,\psi)$  column by column by backtracking.  For each computed pair $(\xi,\psi)$ it then remains to run over $\alpha\in\Aut(E)$ with $\psi\circ\alpha=\psi$, see Lemma \ref{lemHcountS}. Since the value $|\mathcal{K}(\mathcal{U},L)^H|$ is independent of $\alpha$, it can be computed and stored once per pair $(\xi,\psi)$, and not for each $\alpha$.} 

All the optimisations described above were implemented with the assistance of the large language model Claude Opus 5, which has also streamlined and significantly improved the final code. The implementation has been checked extensively against independent data, see Section \ref{secXC}.

 \enlargethispage{0.3cm}

\subsection{Cross-checking}\label{secXC}
Theorems \ref{thm_mainodd} and \ref{thm_maineven} are proved mathematically: no part of their proofs relies on a computation. Nevertheless, we have cross-checked our implementation against existing data. These tests serve two purposes. First, they validate our implementation, which uses optimisations (Section~\ref{secImpl}) that are not part of the formulas themselves. Second, they provide independent evidence for the correctness of our proofs: the values we compare against are obtained by group calculations, so a mathematical error in one of our case distinctions or in the technical proofs in Appendix \ref{secKH} would be expected to lead to a disagreement. We ran the following tests; all runtimes were obtained on a 2026 Apple M5. 
\begin{iprf}
\item[$\bullet$] The SmallGroups Library  \cite{smallgroups} stores the cubefree groups of order at most $50{,}000$. The values of our formula implementation agree with these numbers of groups, which range from $1$ to $3{,}093$; the maximum is attained for $n=2^2.3^2.5^2.7^2=44{,}100$. It took our implementation $2$ seconds to determine the numbers of all groups of cubefree order $n\leq 50{,}000$. 
\item[$\bullet$] The SmallGroups Library provides H\"older's formula for squarefree orders. The values of our formula implementation agree with these numbers for all squarefree $n\leq 10^8$; see also Remark~\ref{rem_redsq}. The paper \cite{sot} classifies all groups whose orders factorise into at most four primes; for every cubefree $n\leq 10^8$ of that form, our values agree with those obtained from the implementation of \cite{sot}.  The paper \cite{cgroups} provides group identification and construction for C-groups; we verified that our formula counts the same number of C-groups for every cubefree $n\leq 10^8$. Since every group of squarefree order is a $C$-group, this test reconfirms our squarefree test against H\"older's formula.
\item[$\bullet$] The   package Cubefree  \cite{cubefreeGAP} provides a function {\small {\tt NumberCFGroups}} that counts the number of groups of a given cubefree order. This function has only limited applicability since it relies on computations in groups and not on a formula. For example, it took our code  $2.5$ seconds to enumerate the $265{,}681{,}924$ groups of order $3^2.5^2.11^2.29^2.41^2.59^2.101^2$, and we stopped {\small{\tt NumberCFGroups}} after one hour without a result.  We ran {\small{\tt NumberCFGroups}} for a few days on multiple cores to enumerate all groups of cubefree order $n\leq 10^6$, and the values of our implemented formula always agreed.   
\item[$\bullet$] Since the above tests  are not guaranteed to reach every `branch' of our formulas, we have also checked our implementation against {\small\tt NumberCFGroups} for some specifically designed larger orders whose prime factors satisfy many divisibility relations  $q\mid p\pm 1$. Some of these orders are listed in the upper part of Table \ref{tabTests}; in each case the two values agreed. It took {\small\tt NumberCFGroups} about $17$ hours to enumerate the groups for $n=821{,}760{,}380{,}100$; our code needed $0.8$ seconds. We were not able to run {\small\tt NumberCFGroups} to completion for the last five orders in Table \ref{tabTests}; we list them here to indicate some runtimes for calculating $\gnu(n)$. For the last (odd) number, $\gnu(n)$ can be computed readily  due to \eqref{eq_splitgnu}; the graph $\Gamma(n)$ splits into four components. Without applying \eqref{eq_splitgnu}, the calculation of $\gnu(n)$ was aborted after $5$ minutes.  
\end{iprf}

\begin{table}
\footnotesize  
\renewcommand{\arraystretch}{1.1}  
{\[\begin{array}{@{}rcr|rr@{}}
\multicolumn{3}{c|}{\hspace*{2cm}\text{factorised order $n$}} & \gnu(n) & \text{time (sec)}\\\hline\hline
2^2.3^2.5^2.7^2.11^2.13^2 &=& 901800900& 8228379& 0.5\\
2^2.3^2.7.13^2.19.37^2&=&1107756468 & 6508264& 0.1\\
2.3^2.5^2.7^2.17^2.19^2 & = & 2300454450 & 298863 & 0.1 \\
2^2.3^2.17^2.19^2.37^2&=&5141750436& 1122161 & 0.1\\
2^2.3^2.7^2.11^2.197^2 &=& 8283548196 &  133543 & 0.1\\
2^2.3^2.5^2.101^2.199^2 &=& 363572820900 & 338109 & 0.1\\
2^2.3^2.5^2.11^2.41^2.67^2 &=&821760380100 & 22855136 & 0.8\\
2^2.3^2.7^2.11^2.43^2.131^2 & = & 6772725182916 & 8434136 & 0.8\\
2^2.5^2.11^2.109^2.331^2 &=& 15750500316100 & 5816325 & 0.1\\
3^2.5^2.7^2.199^2.251^2 &=&  27506301176025 & 32050 & 0.1\\\hline
 3^2.5^2.7^2.11^2.29^2.31.37.41&=&52760297880675 & 12333191 & 0.6\\ 
 2.3^2.7^2.11.13^2.17^2.19^2.23^2.29.31 &=&81352027912197042 & 5212072308 & 4.6\\
 2^2.3^2.7^2.13^2.2731^2.2729^2  & =&16559062945858385316 & 2114627120 & 3.5\\
2.3.5^2.11^2.23.43^2.53^2.79^2.83.103^2.113 &=&1346412960864517896277950 &137118184 & 3.5\\
5^2.7^2.13^2.17^2.19^2.23^2.31^2.37^2.43^2.47^2.53^2.59^2.67^2 &=& \approx 2.7\cdot 10^{36} & 92432340 & 0.1 
\end{array}\]}
\caption{Some larger cubefree orders for which $\gnu(n)$ was calculated; for the orders above the line, the values of $\gnu(n)$ were cross-checked against \cite{cubefreeGAP}.}\label{tabTests}
\end{table}

\section{Applications: group construction and identification}\label{sec_app}
Computer algebra systems such as GAP \cite{gap} and Magma \cite{magma} contain the SmallGroups Library, which stores isomorphism type representatives for the groups of certain ``small'' orders, together with an {\emph{identification function}}: given a group $G$ of such an order $n$, one can determine its ID $[n,i]$, meaning that $G$ is isomorphic to the $i$-th group in the stored list for order $n$. Since the ID is an isomorphism invariant, reducing a list of $m$ groups up to isomorphism amounts to comparing IDs, rather than employing $m(m-1)/2$ isomorphism tests; we refer to Besche-Eick-O'Brien \cite{smallgroups,mill} for more details.

Extending this library is an important topic in computational group theory: recent contributions are Dietrich-Eick-Pan \cite{sot} for orders that factorise into at most four primes, Dietrich-Low \cite{cgroups} for C-groups, and Eick-Schanze \cite{29} and Dietrich-Eick-Schanze \cite{p5} for the orders $2^9$ and $p^5$. With the exception of order $2^9$, these works provide dynamic databases that describe infinitely many groups: for example, \cite{sot} allows one to construct, enumerate, and identify all the groups of order $p^4$, $p^3q$, $p^2q^2$, $p^2qr$, $pqrs$, for arbitrary distinct primes $p,q,r,s$. One reason why this can be done efficiently is that the groups of a specific order are partitioned into \emph{clusters} according to some structure invariants, with counting formulas for each cluster: for example, the groups of order $p^3q$ form $12$ clusters, see \cite[Table~2]{sot}, and constructing the group with ID $[p^3q,j]$ directly (\emph{construction-by-ID}) reduces to constructing a group in a uniquely defined cluster that is determined by $j$ and the counting functions. All the  routines developed in \cite{sot,29,p5,cgroups} rely on variations of this strategy, which illustrates the importance of suitable counting formulas.

As discussed in Remark~\ref{rem_subclasses}, each of the first sums in Theorems~\ref{thm_mainodd} and \ref{thm_maineven} is indexed by a structural invariant, so that truncating the summation at any of these levels returns the number of cubefree groups of order~$n$ with the corresponding invariants. This is exactly the information required to partition the groups  of a given order into clusters for efficient group construction and identification. Such a clustering of groups is particularly relevant for orders that admit \emph{many} isomorphism types, see Table \ref{tabTests}, since it allows one to identify or construct a specific group without considering all groups of that order. The counting formulas presented in this paper provide the basis for the development of identification and construction-by-ID for cubefree groups; this is beyond our present scope and will be investigated in future work.

\appendix
\section{Technical arguments for the even case}\label{secKH} 

The aim of this appendix is to show that the numbers  $|\mathcal{K}(\mathcal{U},L)^H|$ and $A(\mathcal{U};E,\xi,\alpha,\psi)$ required for the even case in Theorem~\ref{thm_maineven} and Proposition \ref{prop_t2S} can be evaluated by a combinatorial calculation and table look-ups; this is used in our implementation. The technical results are presented here to not disrupt the narrative in the main text.

\subsection*{The cardinalities $|\mathcal{K}(\mathcal{U},L)^H|$}
We first consider a generalisation of Lemma \ref{lemFix} and show how the size $|\mathcal{K}(\mathcal{U},L)^H|$ can be evaluated combinatorially. Since the bijection \eqref{eqSplit} is $\Theta(\mathcal{U})$-equivariant, it restricts to a bijection \begin{align*}\mathcal{K}(\mathcal{U},L)^H\to\prod\nolimits_{q\mid\nu_0}\mathcal{K}_q(\mathcal{U},L)^H,
\end{align*}so that $|\mathcal{K}(\mathcal{U},L)^H|=\prod_{q\mid\nu_0}|\mathcal{K}_q(\mathcal{U},L)^H|$. The next  lemma shows how each $|\mathcal{K}_q(\mathcal{U},L)^H|$ can be determined. We use the notation of Section \ref{secStep1}, that is, our tuple is denoted $\mathcal{U}=(U_1,\ldots,U_m)$, $q$ is a prime divisor of $\nu_0$, and we write $W=X_1\times\ldots\times X_m$ where each $X_i=(U_i)_q$.

\begin{lemma}\label{lemFixHS} 
We use the above notation and for a homomorphism $\chi\colon H\to\{1,-1\}$ define the $\chi$-eigenspace as
\[W_\chi=\{w\in W : \theta(w)=w^{\chi(\theta)} \text{ for all }\theta\in H\}.\]
If $\mathcal{Y}$ is the set of those $\chi$ with $W_\chi\ne1$, then $W=\prod_{\chi\in\mathcal{Y}}W_\chi$.  For $i\in\{1,\ldots,m\}$ and any homomorphism $\chi$ define  $d_i^{(\chi)}=\rk(X_i\cap W_\chi)$ and
\[\Lambda^{(\chi)}=\tfrac{1}{q-1}\left(\prod\nolimits_{i=1}^mc(d_i^{(\chi)})-\prod\nolimits_{i=1}^mz(d_i^{(\chi)})\right).\]
Then 
\begin{align*}\big|\mathcal{K}_q(\mathcal{U},L)^H\big|=\begin{cases}
\big|\mathcal{K}_q(\mathcal{U},L)\big|&\text{if }|\mathcal{Y}|\leq1,\\
\Delta_{L_q\cong C_q^2}\ \Lambda^{(\chi)}\Lambda^{(\chi')}&\text{if }\mathcal{Y}=\{\chi,\chi'\}\text{ with }\chi\ne\chi',\\
0&\text{if }|\mathcal{Y}|\geq3.
\end{cases}
\end{align*}
\end{lemma}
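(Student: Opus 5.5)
First we obtain the decomposition $W=\prod_{\chi\in\mathcal{Y}}W_\chi$. Here $H\leq\Theta(\mathcal{U})$ is an elementary abelian $2$-group acting on the abelian group $W$ of odd order. Since $\Theta(\mathcal{U})$ acts componentwise, each $X_i$ is $H$-invariant. Pick generators $\theta_1,\ldots,\theta_k$ of $H$ and apply Lemma~\ref{lemEig} repeatedly: $W=W^{+}\times W^{-}$ for $\theta_1$, each factor is $\theta_2$-invariant because $H$ is abelian, so split again, and so on. The resulting factors are exactly the common eigenspaces $W_\chi$ for the characters $\chi\colon H\to\{\pm1\}$. Dropping the trivial ones gives the product over $\mathcal{Y}$. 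Running the same argument inside each $X_i$ gives $X_i=\prod_\chi (X_i\cap W_\chi)$, and hence $W_\chi=\prod_i (X_i\cap W_\chi)$. This is what makes $d_i^{(\chi)}$ the right ranks, as in Lemma~\ref{lemFix}.

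For $|\mathcal{Y}|\leq1$: either $W=1$, and then both sides vanish by Lemma~\ref{lemSizes}, or $W=W_\chi$. In the latter case each $\theta\in H$ acts on $W$ as the identity or as inversion, so it stabilises every subgroup. Hence $\mathcal{K}_q(\mathcal{U},L)^H=\mathcal{K}_q(\mathcal{U},L)$.

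For $|\mathcal{Y}|\geq2$, take $K\in\mathcal{K}_q(\mathcal{U},L)^H$. Since $K$ is $H$-invariant and of odd order, the same splitting argument gives $K=\prod_\chi (K\cap W_\chi)$. We then adapt observation (2) of the proof of Lemma~\ref{lemFix}. We have $\pi_i(K)=\prod_\chi\pi_i(K\cap W_\chi)$ with $\pi_i(K\cap W_\chi)\leq X_i\cap W_\chi$, so surjectivity $\pi_i(K)=X_i$ forces $\pi_i(K\cap W_\chi)=X_i\cap W_\chi$ for every $i$ and $\chi$. If $\chi\in\mathcal{Y}$, then some $X_i\cap W_\chi\neq1$, so $K\cap W_\chi\neq1$. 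Therefore $K$ has at least $|\mathcal{Y}|$ nontrivial direct factors, and $\rk(K)\geq|\mathcal{Y}|$.

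Since $\rk(L_q)\leq 2$, this immediately gives $0$ when $|\mathcal{Y}|\geq3$, and also when $|\mathcal{Y}|=2$ with $L_q$ cyclic. In the remaining case $|\mathcal{Y}|=2$ and $L_q\cong C_q^2$, each $K\cap W_\chi$ for $\chi\in\mathcal{Y}$ has order $q$ and projects onto every $X_i\cap W_\chi$. Conversely, any pair $(A,A')$ of such order-$q$ subgroups gives $K=A\times A'\in\mathcal{K}_q(\mathcal{U},L)^H$, since $K$ is invariant and has the right projections. So the count is $|\mathcal{A}^{(\chi)}|\,|\mathcal{A}^{(\chi')}|$, where $\mathcal{A}^{(\chi)}$ is defined as $\mathcal{A}^\pm$ in the proof of Lemma~\ref{lemFix} with $X_i\cap W_\chi$ in place of $X_i^\pm$. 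Step (1) of that proof shows $|\mathcal{A}^{(\chi)}|=\Lambda^{(\chi)}$, because the $X_i$ are elementary abelian in this case.

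The main point requiring care is the simultaneous eigenspace decomposition, in particular that $X_i$ and $K$ split compatibly; this relies on $|W|$ being odd and $H$ abelian. I expect the projection argument to be routine once that is in place.
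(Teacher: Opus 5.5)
Your proposal is correct and follows essentially the same route as the paper. Both arguments use the following steps: iterate Lemma~\ref{lemEig} over generators of the abelian group $H$ to split $W$, each $X_i$, and any $H$-invariant subgroup into the eigenspaces $W_\chi$; dispose of $|\mathcal{Y}|\leq 1$ by noting that $H$ acts on $W_\chi$ by identity or inversion; derive $\rk(L_q)\geq|\mathcal{Y}|$ from the forced nontriviality of each $K\cap W_\chi$; and finish the case $|\mathcal{Y}|=2$, $L_q\cong C_q^2$ by counting pairs $A\times A'$ with step (1) of the proof of Lemma~\ref{lemFix}.
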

\begin{proof}
  Since $|W|$ is odd and $\theta^2=1$ for every $\theta\in H$, applying Lemma \ref{lemEig} iteratively to the generators of  $H$ decomposes $W$ into the nontrivial subgroups on which each of these generators acts as a scalar; these subgroups are exactly the nontrivial $W_\chi$, so $W=\prod_{\chi\in\mathcal{Y}}W_\chi$, as claimed.  (This is possible since $H$ is abelian, so each generator preserves the eigenspaces of the others.)

  As $H$ acts coordinate-wise, we also have $W_\chi=\prod_{i=1}^m(X_i\cap W_\chi)$ and $X_i=\prod_{\chi\in\mathcal{Y}}(X_i\cap W_\chi)$ for every~$i$. Applying the same decomposition to an $H$-invariant  $O\leq W$ yields $O=\prod_{\chi\in\mathcal{Y}}(O\cap W_\chi)$. Conversely, each $\theta\in H$ acts on $W_\chi$ as $w\mapsto w^{\chi(\theta)}$, so every subgroup of $W_\chi$ is $H$-invariant; in particular, $\prod_{\chi\in\mathcal{Y}}A_\chi$ is $H$-invariant for any  $A_\chi\leq W_\chi$. If $O\in\mathcal{K}_q(\mathcal{U},L)^H$, then $\pi_i(O)=X_i$ forces  $\pi_i(O\cap W_\chi)=X_i\cap W_\chi$ for all $i$ and $\chi$. 

  If $\mathcal{Y}=\emptyset$, then $W=1$ and $\mathcal{K}_q(\mathcal{U},L)^H=\mathcal{K}_q(\mathcal{U},L)$ are both empty; if $\mathcal{Y}=\{\chi\}$, then $W=W_\chi$ and every $O\in\mathcal{K}_q(\mathcal{U},L)$ is $H$-invariant. Thus, the first case holds.

  Now let $|\mathcal{Y}|\geq2$ and $O\in\mathcal{K}_q(\mathcal{U},L)^H$.  For every $\chi\in\mathcal{Y}$ we have  $W_\chi\ne 1$, so there is $i$ with $X_i\cap W_\chi\ne1$, and so $\pi_i(O\cap W_\chi)=X_i\cap W_\chi\ne1$. This shows that  $O\cap W_\chi\ne1$ for every $\chi\in\mathcal{Y}$.  Since $O=\prod_{\chi\in\mathcal{Y}}(O\cap W_\chi)$ and $O\cong L_q$, we have $\rk(L_q)=\rk(O)\geq|\mathcal{Y}|\geq2$, and since $|L_q|$ divides $q^2$, this forces $L_q\cong C_q^2$ and $|\mathcal{Y}|=2$, say $\mathcal{Y}=\{\chi,\chi'\}$. (In particular, $\mathcal{K}_q(\mathcal{U},L)^H=\emptyset$ if $|\mathcal{Y}|\geq3$ or $L_q$ is cyclic.)   Then $O=A\times A'$ with $A=O\cap W_\chi$ and $A'=O\cap W_{\chi'}$ of order $q$. Thus the groups in $\mathcal{K}_q(\mathcal{U},L)^H$  are exactly $A\times A'$  where $A$ and $A'$ run  through the subgroups of order $q$ of $W_\chi$ and $W_{\chi'}$ that project onto $X_i\cap W_\chi$ and $X_i\cap W_{\chi'}$ for every $i$. As in step (1) of the proof of Lemma \ref{lemFix} the number of these is $\Lambda^{(\chi)}$ and $\Lambda^{(\chi')}$, respectively.
\end{proof} 
 
We explain how $\mathcal{Y}$ and $d_i^{(\chi)}$ can be read off the integers of Remark~\ref{rem_storage} and the $0/1$-vectors representing $H$, with no group calculation.  For each $i\in\{1,\ldots,m\}$, let $\chi_i\colon H\to\{1,-1\}$ be the homomorphism with $\chi_i(\theta)=1$ if and only if $\theta_i=1$; note that the $0/1$-vector describing $\theta$ is $(s_1,\ldots,s_m)$ where each $\chi_i(\theta)=(-1)^{s_i}$. Let $X_i=X_i^+\times X_i^-$ be the eigenspace decomposition of $\theta_{U_i}$ with $d_i^\pm=\rk(X_i^\pm)$ (where $\theta_{U_i}={\rm id}$ if $\Theta(U_i)=1$). Let $\chi_0\colon H\to\{1,-1\}$ be the trivial character. We claim that $X_i\cap W_\chi\neq1$ forces $\chi\in\{\chi_0,\chi_i\}$: if $x\in X_i\cap W_\chi$ is nontrivial and $\theta\in\ker\chi_i$, then $x=\theta(x)=x^{\chi(\theta)}$, so $\chi(\theta)=1$  and $\ker\chi_i\leq\ker\chi$. Since $|H/\ker\chi_i|\leq2$ and a homomorphism into $\{1,-1\}$ is determined by its kernel, $\chi\in\{\chi_0,\chi_i\}$, as claimed. Since $W_\chi=\prod_{i=1}^m(X_i\cap W_\chi)$, this shows that $\chi\in\mathcal{Y}$ implies $\chi\in\{\chi_0,\chi_i\}$ for some $i$; moreover, $\chi\in\mathcal{Y}$ if and only if $d_i^{(\chi)}\neq0$ for some $i$, and $d_i^{(\chi)}=0$ whenever $\chi\notin\{\chi_0,\chi_i\}$. If $\chi_i=\chi_0$, then $H$ acts trivially on $X_i$, so $X_i\leq W_{\chi_0}$ and $d_i^{(\chi_0)}=d_i$. If $\chi_i\neq\chi_0$, then $X_i\cap W_{\chi_0}=X_i^+$ and $X_i\cap W_{\chi_i}=X_i^-$;  thus, $d_i^{(\chi_0)}=d_i^+$ and $d_i^{(\chi_i)}=d_i^-$.

\subsection*{The numbers $A(\mathcal{U};E,\xi,\alpha,\psi)$}
It remains to evaluate the numbers $A(\mathcal{U};E,\xi,\alpha,\psi)$, see \eqref{eq_newA}. We continue with the notation introduced prior to Proposition \ref{prop_t2S}, so we consider homomorphisms $\phi=(\phi_1,\ldots,\phi_m)$ where each $\phi_i\in \Hom(E,N_i)/Z_i$, such that $\bigcap_{i=1}^m \ker\phi_i=1$. Without this trivial intersection condition in the definition of $\mathcal{H}(E)$, the number $A(\mathcal{U};E,\xi,\alpha,\psi)$ would simply factor over the columns, where the $i$-th factor, denoted  $T(U_i;E,\xi_i,\alpha,\psi_i)$, is the number of $Z_i$-classes of homomorphisms $\phi_i\colon E\to N_i$ with $\epsilon_{U_i}\circ\phi_i=\psi_i$ and $\phi_i^{\xi_i}\circ\alpha=\phi_i$; recall that the last equation is an equation of $Z_i$-classes, see Remark \ref{rem_classes}. Motivated by this, for a group $P$ of order dividing $4$ and $U\in\mathcal{U}(p,e)$ with $C=C_{\GL_e(p)}(U)$ and $N=N_{\GL_e(p)}(U)$ we define
\begin{equation}\label{eq_defT}T(U;P,\xi,\alpha,\psi)=\big|\{\phi\in\Hom(P,N)/C : \epsilon_U\circ\phi=\psi\ \text{ and }\ \phi^\xi\circ\alpha=\phi\}\big|,\end{equation}
where $\xi\in\Theta(U)$, $\alpha\in\Aut(P)$, and $\psi\in\Hom(P,\Theta(U))$. We first reduce $A(\mathcal{U};E,\xi,\alpha,\psi)$ to these numbers by an inclusion-exclusion over the subgroup lattice of $E$, where each subgroup represents a kernel $\bigcap_{i=1}^m\ker\phi_i$.
 
\begin{lemma}\label{lemHcountS}
We continue with the previous notation, and consider  $\xi\in\Theta(\mathcal{U})$, $\alpha\in\Aut(E)$, and $\psi\in\Hom(E,\Theta(\mathcal{U}))$. If $\psi\circ\alpha\ne\psi$, then $A(\mathcal{U};E,\xi,\alpha,\psi)=0$. If $\psi\circ\alpha=\psi$, then
\[A(\mathcal{U};E,\xi,\alpha,\psi)=\sum\nolimits_{R\leq E}\mu(R)\ \Delta_{\langle R\rangle_\alpha\leq\ker\psi}\ \prod\nolimits_{i=1}^mT\big(U_i;E/\langle R\rangle_\alpha,\xi_i,\bar\alpha,\bar\psi_i\big),\]
with the following notation: $\langle R\rangle_\alpha$ is the smallest $\alpha$-invariant subgroup of $E$ containing $R$, the maps $\bar\alpha\in\Aut(E/\langle R\rangle_\alpha)$ and $\bar\psi_i\in\Hom(E/\langle R\rangle_\alpha,\Theta(U_i))$ are induced by $\alpha$ and $\psi_i$ (where $\bar\psi_i$ is only defined if $\Delta_{\langle R\rangle_\alpha\leq\ker\psi}=1$), and $\mu$ is the M\"obius function of the subgroup lattice of $E$, that is, $\mu$ is determined by $\sum_{R\leq \tilde R}\mu(R)=\Delta_{\tilde R=1}$, and  explicitly, $\mu(1)=1$, $\mu(R)=-1$ for $|R|=2$, $\mu(C_4)=0$, and $\mu(C_2^2)=2$.
\end{lemma}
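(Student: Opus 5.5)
The plan is a Möbius inversion over the kernel $\bigcap_i\ker\phi_i$, followed by an identification of the resulting unconstrained counts with products of column counts. Fix $\xi,\alpha,\psi$. Let $\mathcal{T}$ be the set of all tuples $\phi=(\phi_1,\ldots,\phi_m)$ with $\phi_i\in\Hom(E,N_i)/Z_i$, $\epsilon_{U_i}\circ\phi_i=\psi_i$ and $\phi_i^{\xi_i}\circ\alpha=\phi_i$, with no injectivity condition. Kernels of homomorphisms are invariant under $Z_i$-conjugation, so $\kappa(\phi)=\bigcap_i\ker\phi_i$ is well defined, and $A(\mathcal{U};E,\xi,\alpha,\psi)$ is the number of $\phi\in\mathcal{T}$ with $\kappa(\phi)=1$.

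\textbf{Vanishing case.} Suppose $\phi_i^{\xi_i}\circ\alpha=\phi_i$. Apply $\epsilon_{U_i}$ to both sides. Since $\Theta(U_i)$ is abelian, conjugation by a preimage of $\xi_i$ becomes trivial after applying $\epsilon_{U_i}$. This gives $\psi_i\circ\alpha=\psi_i$. So $\mathcal{T}=\emptyset$ whenever $\psi\circ\alpha\ne\psi$, and the first claim follows. From now on assume $\psi\circ\alpha=\psi$.

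\textbf{Möbius inversion.} For $\phi\in\mathcal{T}$ the kernel $\kappa(\phi)$ is $\alpha$-invariant. Indeed, $\ker(\phi_i^{\xi_i}\circ\alpha)=\alpha^{-1}(\ker\phi_i^{\xi_i})=\alpha^{-1}(\ker\phi_i)$, because conjugation by a preimage of $\xi_i$ does not change kernels. The identity $\phi_i^{\xi_i}\circ\alpha=\phi_i$ then gives $\ker\phi_i=\alpha^{-1}(\ker\phi_i)$.

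By the defining property of $\mu$,
\[\Delta_{\kappa(\phi)=1}=\sum\nolimits_{R\leq\kappa(\phi)}\mu(R).\]
Sum this over $\phi\in\mathcal{T}$ and swap the order of summation. This gives
\[A=\sum\nolimits_{R\leq E}\mu(R)\,\big|\{\phi\in\mathcal{T}:R\leq\kappa(\phi)\}\big|.\]
Since $\kappa(\phi)$ is $\alpha$-invariant, the condition $R\leq\kappa(\phi)$ is equivalent to $\langle R\rangle_\alpha\leq\ker\phi_i$ for all $i$.

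\textbf{Factorisation of the inner count.} Write $Q=\langle R\rangle_\alpha$. The map $\phi_i\mapsto\bar\phi_i$ is a bijection between $Z_i$-classes of homomorphisms $E\to N_i$ with $Q\leq\ker\phi_i$ and $Z_i$-classes in $\Hom(E/Q,N_i)$, and it is compatible with conjugation. I will check the constraints one at a time under this bijection.

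\begin{enumerate}[(i)]
\item The condition $\epsilon_{U_i}\circ\phi_i=\psi_i$ forces $Q\leq\ker\psi_i$ for all $i$, that is, $Q\leq\ker\psi$. If this fails, the count is $0$; this accounts for the factor $\Delta_{\langle R\rangle_\alpha\leq\ker\psi}$. If it holds, the condition becomes $\epsilon_{U_i}\circ\bar\phi_i=\bar\psi_i$.
\item Since $Q$ is $\alpha$-invariant, $\alpha$ induces $\bar\alpha\in\Aut(E/Q)$. The condition $\phi_i^{\xi_i}\circ\alpha=\phi_i$ holds if and only if $\bar\phi_i^{\xi_i}\circ\bar\alpha=\bar\phi_i$, as an equation of $Z_i$-classes.
\end{enumerate}

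After these reductions the conditions are independent across the columns. The inner count is therefore $\prod_iT(U_i;E/Q,\xi_i,\bar\alpha,\bar\psi_i)$ by \eqref{eq_defT}. Substituting this into the inverted sum gives the claimed formula.

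\textbf{The explicit Möbius values.} These are the standard ones. For a group of order $4$: $\mu(1)=1$, each subgroup $R$ of order $2$ has $\mu(R)=-1$, and for $R$ of order $4$ the relation $\sum_{R'\leq R}\mu(R')=0$ gives $\mu(C_4)=1-1=0$ and $\mu(C_2^2)=-(1-3)=2$.

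\textbf{Main obstacle.} The delicate point is that all statements are about $Z_i$-classes. I need that kernels, the condition $\epsilon_{U_i}\circ\phi_i=\psi_i$, and the equation $\phi_i^{\xi_i}\circ\alpha=\phi_i$ all pass correctly to the quotient $E/Q$. I also need the $\alpha$-invariance of $\kappa(\phi)$, since that is what justifies replacing $R$ by $\langle R\rangle_\alpha$. Remark~\ref{rem_classes} handles the well-definedness of the class equation.
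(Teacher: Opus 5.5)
Your proposal is correct and follows the paper's proof essentially step for step. The steps are the same: the vanishing argument via abelianness of $\Theta(U_i)$, Möbius inversion over the common kernel $\bigcap_i\ker\phi_i$, the passage from $R$ to $\langle R\rangle_\alpha$ using $\alpha$-invariance of each $\ker\phi_i$, and the columnwise identification with $T(U_i;E/\langle R\rangle_\alpha,\xi_i,\bar\alpha,\bar\psi_i)$ via the quotient bijection on $Z_i$-classes. The only presentational difference is that you sum $\Delta_{\kappa(\phi)=1}=\sum_{R\leq\kappa(\phi)}\mu(R)$ directly over tuples, whereas the paper introduces the auxiliary counts $\mathcal{T}(R)$ and $\mathcal{T}_{\ker}(\tilde R)$; in your step (ii) the paper merely spells out $\phi^{\xi_i}\circ\alpha=(\bar\phi^{\,\xi_i}\circ\bar\alpha)\circ\pi$ together with injectivity of composition with $\pi$, which is the check you indicate.
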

\begin{proof}
Since $\Theta(U_i)$ is abelian, we have $\epsilon_{U_i}\circ\phi_i^{\xi_i}=\epsilon_{U_i}\circ\phi_i$, so $\phi_i^{\xi_i}\circ\alpha=\phi_i$ forces $\psi_i\circ\alpha=\psi_i$; thus $A(\mathcal{U};E,\xi,\alpha,\psi)=0$ if $\psi\circ\alpha\ne\psi$. Now let $\psi\circ\alpha=\psi$, and for $R\leq E$ let $\mathcal{T}(R)$ be the number of tuples $\phi=(\phi_1,\ldots,\phi_m)$ of $Z_i$-classes of homomorphisms $\phi_i\colon E\to N_i$ with $R\leq\ker\phi_i$ and $\epsilon_{U_i}\circ\phi_i=\psi_i$ and $\phi_i^{\xi_i}\circ\alpha=\phi_i$ for all $i$. Thus $\mathcal{T}(1)$ counts all tuples $\phi$ satisfying the last two conditions, with no constraint on the kernels, and $\mathcal{T}(R)$ counts those with  $R\leq\bigcap_{i=1}^m\ker\phi_i$. For $\tilde R\leq E$ let $\mathcal{T}_{\ker}(\tilde R)$ be the number of tuples counted by $\mathcal{T}(1)$ with $\bigcap_{i=1}^m\ker\phi_i=\tilde R$. Sorting the tuples counted by $\mathcal{T}(R)$ according to the value of $\bigcap_{i=1}^m\ker\phi_i$ therefore gives 
\[(\ast)\quad \mathcal{T}(R)=\sum\nolimits_{R\leq \tilde R\leq E}\mathcal{T}_{\ker}(\tilde R)\quad\text{for every }R\leq E.\]
We have $A(\mathcal{U};E,\xi,\alpha,\psi)=\mathcal{T}_{\ker}(1)$ by definition, see \eqref{eq_newA}. Since $\sum_{R\leq \tilde R}\mu(R)=\Delta_{\tilde R=1}$ by definition, we can rewrite
\[\mathcal{T}_{\ker}(1) = \sum\nolimits_{\tilde R\leq E}\mathcal{T}_{\ker}(\tilde R)\sum\nolimits_{R\leq \tilde R}\mu(R)=\sum\nolimits_{R\leq E}\ \sum\nolimits_{R\leq\tilde R\leq E}\mu(R)\mathcal{T}_{\ker}(\tilde R),\]
and now $(\ast)$ yields 
\[A(\mathcal{U};E,\xi,\alpha,\psi)=\mathcal{T}_{\ker}(1)=\sum\nolimits_{R\leq E}\mu(R)\mathcal{T}(R).\]
Note that $\mu$ is the M\"obius function  $\bar\mu_1$ of \cite[(2.41), (2.42)]{hall} for the group $E$. If $\phi_i^{\xi_i}\circ\alpha=\phi_i$, then $\ker\phi_i$ is $\alpha$-invariant; recall  that $Z_i$-conjugate homomorphisms have the same kernel, so  this statement is meaningful for a $Z_i$-class $\phi_i$. Thus, $R\leq\ker\phi_i$ if and only if $\langle R\rangle_\alpha\leq\ker\phi_i$. This shows that $\mathcal{T}(R)=\mathcal{T}(\langle R\rangle_\alpha)$. The lemma follows once we have proved that  
\[(\dagger)\quad \mathcal{T}(\langle R\rangle_\alpha)=\Delta_{\langle R\rangle_\alpha\leq\ker\psi}\prod\nolimits_{i=1}^mT(U_i;E/\langle R\rangle_\alpha,\xi_i,\bar\alpha,\bar\psi_i),\]
where $R\leq E$ is a subgroup, and $\psi\in\Hom(E,\Theta(\mathcal{U}))$ is the homomorphism fixed in the lemma with components $\psi=(\psi_1,\ldots,\psi_m)$. Write $P=E/\langle R\rangle_\alpha$ and let $\pi\colon E\to P$ be the natural epimorphism; since $\langle R\rangle_\alpha$ is $\alpha$-invariant, the induced $\bar\alpha\in\Aut(P)$ is well defined and  $\bar\alpha\circ\pi=\pi\circ\alpha$. Recall that if $G$ is any group, then  $\bar\phi\mapsto\bar\phi\circ\pi$ defines a  bijection between $\Hom(P,G)$ and those homomorphisms in $\Hom(E,G)$ that have $\langle R\rangle_\alpha$ in their kernel. We make a case distinction on whether $\langle R\rangle_\alpha\leq\ker\psi$.

First, let $\langle R\rangle_\alpha\not\leq\ker\psi$, that is, $\langle R\rangle_\alpha\not\leq\ker\psi_j$ for some $j$. Every tuple counted by $\mathcal{T}(\langle R\rangle_\alpha)$ satisfies $\langle R\rangle_\alpha\leq\ker\phi_j\leq\ker(\epsilon_{U_j}\circ\phi_j)=\ker\psi_j$, so there is no such tuple, hence $\mathcal{T}(\langle R\rangle_\alpha)=0$. As $\Delta_{\langle R\rangle_\alpha\leq\ker\psi}=0$, both sides of $(\dagger)$ are $0$.
 
Now let $\langle R\rangle_\alpha\leq\ker\psi$, so each $\psi_i=\bar\psi_i\circ\pi$ for a unique $\bar\psi_i\in\Hom(P,\Theta(U_i))$. We consider a fixed~$i$. Let $\bar\phi\in\Hom(P,N_i)$ and define  $\phi=\bar\phi\circ\pi$. Since $(\bar\phi\circ\pi)^c=\bar\phi^{\,c}\circ\pi$ for every $c\in Z_i$, the above bijection is compatible with the $Z_i$-actions, hence it induces a bijection between $\Hom(P,N_i)/Z_i$ and the $Z_i$-classes of homomorphisms $E\to N_i$ whose kernel contains $\langle R\rangle_\alpha$. Note that
\[\epsilon_{U_i}\circ\phi=(\epsilon_{U_i}\circ\bar\phi)\circ\pi\qquad\text{and}\qquad \phi^{\xi_i}\circ\alpha=(\bar\phi^{\,\xi_i}\circ\bar\alpha)\circ\pi,\]
where the right equation follows from $\pi\circ\alpha=\bar\alpha\circ\pi$ and the fact that $\phi^{\xi_i}$ and $\bar\phi^{\,\xi_i}$ are obtained by composing $\phi$ and $\bar\phi$ with conjugation by a preimage $n\in N_i$ of $\xi_i$. Since $\psi_i=\bar\psi_i\circ\pi$ and composition with $\pi$ is injective and compatible with the $Z_i$-actions, these two equations show that the $Z_i$-class of $\bar\phi$ satisfies $\epsilon_{U_i}\circ\bar\phi=\bar\psi_i$ and $\bar\phi^{\,\xi_i}\circ\bar\alpha=\bar\phi$ if and only if the $Z_i$-class of $\phi$ satisfies $\epsilon_{U_i}\circ\phi=\psi_i$ and $\phi^{\xi_i}\circ\alpha=\phi$. Thus, the above bijection identifies the $Z_i$-classes counted by $T(U_i;P,\xi_i,\bar\alpha,\bar\psi_i)$ with the $Z_i$-classes that are possible in the $i$-th component of $\mathcal{T}(\langle R\rangle_\alpha)$; hence there are $T(U_i;P,\xi_i,\bar\alpha,\bar\psi_i)$ of the latter. Since the conditions defining $\mathcal{T}(\langle R\rangle_\alpha)$ are imposed componentwise, $(\dagger)$ holds.
\end{proof}
\enlargethispage{0.5cm}
The next lemma shows that the numbers $T(U;P,\xi,\alpha,\psi)$ in Lemma \ref{lemHcountS} only take  finitely many values and can be determined by a table look-up. This requires the following technical definition; recall that $v_2$ denotes the $2$-adic valuation. By slight abuse of notation, if $\phi\colon A\to B$ is a group homomorphism into an abelian group $B$, then $\phi^{-1}$ denotes the homomorphism $\phi$ composed with $B\to B$, $b\mapsto b^{-1}$.

\begin{definition}\label{def_table2stuff} For an odd prime $p$, a group $P$ of order dividing $4$, $\alpha\in\Aut(P)$, and   $c\geq 1$ define
\begin{align*} \varphi_c(P,\alpha)&=\big|\{\phi\in\Hom(P,C_{2^c}) : \phi\circ\alpha=\phi\}\big|,\\
  \varphi_c^-(P,\alpha)&=\big|\{\phi\in\Hom(P,C_{2^c}) : \phi\circ\alpha=\phi^{-1}\}\big|,
\end{align*}
and let $\rho(P,\alpha)$ be the number of $\GL_2(p)$-classes of homomorphisms $\tau\colon P\to\GL_2(p)$ such that $\tau\circ\alpha$ is $\GL_2(p)$-conjugate to $\tau$. We write $a=v_2(p-1)$ and $\bar a=v_2(p+1)$, and $\delta=\Delta_{4\mid (p-1)}$; note that $a,\bar a\geq1$, that $\delta=1$ if and only if $a\geq2$, and that $a+\bar a=v_2(p^2-1)\geq3$. 
\end{definition}

\begin{lemma}\label{lemTabPhiValues}
  With the notation of Definition \ref{def_table2stuff}, the values of $\varphi_a$, $\varphi_{a+\bar a}$, $\varphi^-_{a+\bar a}$ and $\rho$ are as in Table~\ref{tabPhiS}.
\end{lemma}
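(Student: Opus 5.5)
The plan is a finite case analysis over the possibilities for $(P,\alpha)$: $P\in\{1,C_2,C_4,C_2^2\}$, with $\alpha$ trivial or inversion for $C_4$, and $\alpha$ of order $1$, $2$ or $3$ in ${\rm Sym}_3$ for $C_2^2$. Since $P$, $\phi\circ\alpha$ and $\phi^{-1}$ are determined by images of generators, both $\varphi$-values reduce to counting solutions of linear conditions in the cyclic group $C_{2^c}$, and we evaluate them for $c=a$ and $c=a+\bar a$.

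\textbf{Step 1: the numbers $\varphi_c$ and $\varphi_c^-$.} Write $\Omega_k$ for the set of elements of order dividing $2^k$ in $C_{2^c}$; since $c\geq1$, $|\Omega_1|=2$, and $|\Omega_2|=2^{\min(c,2)}$, which equals $2+2\delta$ for $c=a$ and $4$ for $c=a+\bar a\geq3$. Then:
\begin{itemize}
\item $\Hom(C_2,C_{2^c})=\Omega_1$ and $\Hom(C_4,C_{2^c})=\Omega_2$.
\item $\Hom(C_2^2,C_{2^c})\cong\Omega_1^2$ has $4$ elements.
\item For $P=C_4$ and $\alpha$ inversion, $\phi\circ\alpha=\phi$ forces $\phi(g)^2=1$, giving $2$ homomorphisms; the condition $\phi\circ\alpha=\phi^{-1}$ is automatic.
\item For $P=C_2^2$, all values lie in $\Omega_1$, where $x=x^{-1}$, so $\varphi^-=\varphi$. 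Counting $\alpha$-invariant functionals on $\GF(2)^2$ gives $4$, $2$, $1$ for $\alpha$ of order $1$, $2$, $3$ respectively.
\end{itemize}
These fill the $\varphi$ columns of Table~\ref{tabPhiS}, with the dependence on $p$ entering only through $\delta$.

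\textbf{Step 2: the numbers $\rho(P,\alpha)$.} Since $p$ is odd, $|P|$ is coprime to $p$, so by Maschke's theorem a homomorphism $\tau\colon P\to\GL_2(p)$ is determined up to $\GL_2(p)$-conjugacy by its character, equivalently by the $2$-dimensional $\GF(p)P$-module. The plan is to classify these modules.
\begin{itemize}
\item $P=C_2$: the possible matrices of the generator are $\diag(\pm1,\pm1)$ up to order, giving $3$ classes, as in Lemma~\ref{lemAtabS}.
\item $P=C_2^2$: the module is a sum of two linear characters over $\GF(p)$, an unordered pair from the $4$ characters, giving $10$ classes.
\item $P=C_4$: the generator has eigenvalues $\lambda,\mu$ with $\lambda^4=\mu^4=1$. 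If $\delta=1$, both lie in $\GF(p)$ and we get $\binom{4}{2}+4=10$ classes. If $\delta=0$, then $\pm i\in\GF(p^2)\setminus\GF(p)$ must occur as a conjugate pair, giving $3+1=4$ classes.
\end{itemize}
The condition that $\tau\circ\alpha$ is conjugate to $\tau$ means the multiset of characters is $\alpha$-stable, and we count these with Burnside-type reasoning:
\begin{itemize}
\item For $C_4$ with inversion, the stable multisets are those closed under $\lambda\mapsto\lambda^{-1}$. When $\delta=1$ these are $\{1,1\},\{1,-1\},\{-1,-1\},\{i,-i\}$, so $4$. When $\delta=0$, the irreducible one is automatically stable, and again we get $4$.
\item For $C_2^2$ with $\alpha$ of order $2$, $\alpha$ fixes two characters and swaps two, so the stable multisets number $3+3+1=7$.
\item For $\alpha$ of order $3$, $\alpha$ fixes the trivial character and cycles the other three. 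Its fixed unordered pairs are only $\{1,1\}$, giving $1$.
\end{itemize}

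\textbf{Main obstacle.} I expect the main obstacle to be exactly this $\rho$ computation. Two things must be tracked carefully: the splitting of the faithful $C_4$-representation according to $\delta$, and matching the conventions of Table~\ref{tabPhiS} (which $\alpha$-classes are listed, and whether $\rho$ is recorded per $\alpha$ or per conjugacy class in $\Aut(P)$). Once those conventions are fixed, each entry is a one-line count.
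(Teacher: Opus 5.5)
\textbf{Error at one table entry.} Your route is the same as the paper's. You read the $\varphi$-columns off $|\Hom(P,C_{2^c})|$ and the images of generators. You compute $\rho(P,\alpha)$ as the number of $\alpha$-stable isomorphism classes of semisimple $2$-dimensional $\GF(p)P$-modules, that is, $\alpha$-stable multisets of two linear characters, with the irreducible $C_4$-module for $\delta=0$ treated separately. Most of your entries agree with Table~\ref{tabPhiS}, but one does not, so as written the proposal contradicts the lemma there.

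For $P=C_2^2$ and $\alpha$ of order $2$ you obtain $3+3+1=7$, whereas the table correctly states $\rho=4$. As you say, $\alpha$ fixes two characters $\chi_0,\chi_1$ and swaps $\chi_2,\chi_3$. A multiset $\{\chi,\chi'\}$ is $\alpha$-stable exactly when both entries are fixed or when $\{\chi,\chi'\}=\{\chi_2,\chi_3\}$. The stable multisets are therefore only $\{\chi_0,\chi_0\}$, $\{\chi_0,\chi_1\}$, $\{\chi_1,\chi_1\}$ and $\{\chi_2,\chi_3\}$. Multisets such as $\{\chi_2,\chi_2\}$ or $\{\chi_0,\chi_2\}$ are mapped to $\{\chi_3,\chi_3\}$ and $\{\chi_0,\chi_3\}$, so they are not stable. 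In general, if $\alpha$ fixes $f$ characters and interchanges $s$ pairs, the count is $\binom{f}{2}+f+s$, which here is $1+2+1=4$. Your other $\rho$-values (the $3$-cycle case and both $C_4$ cases) are consistent with this formula. With this entry corrected, your case analysis reproduces the whole table.

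\textbf{Minor points.} You never state $\varphi^-_{a+\bar a}(C_4,{\rm id})=2$. It follows because $\phi=\phi^{-1}$ forces $\phi(g)^2=1$ for a generator $g$.

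Your worry about conventions is unnecessary. Replacing $\alpha$ by $\beta\alpha\beta^{-1}$ with $\beta\in\Aut(P)$ leaves all four counts unchanged, since precomposing with $\beta$ is a bijection between the relevant sets of homomorphisms or classes. So listing one $\alpha$ per conjugacy class of $\Aut(P)$, as the table does, is harmless.
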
  
\begin{proof}Throughout we use that  $\Hom(P,C_{2^c})$ has order $1,2,4,\gcd(2^c,4)$ for $P=1,C_2,C_2^2,C_4$, and so $|\Hom(C_4,C_{2^a})|=2+2\delta$ and $|\Hom(C_4,C_{2^{a+\bar a}})|=4$. This already determines the entries in the first three columns for $\alpha={\rm id}$.  We continue with the first three columns, and consider  column $\rho$ last.
  
  If $P\in\{1,C_2,C_2^2\}$, then every $\phi\in\Hom(P,C_{2^c})$ takes values in the unique subgroup of order $2$ of $C_{2^c}$ and satisfies $\phi=\phi^{-1}$; a short calculation confirms the values in the first three columns. The values for $P=C_4$ can also be determined readily. For example, note that  $\varphi_c^-(C_4,{\rm id})$ counts the number of homomorphisms $\phi\colon C_4\to C_{2^c}$ whose image lies in the unique subgroup $C_2\leq C_{2^c}$. 
  
  Now consider column    $\rho$. Note that $\rho(P,\alpha)$ is the number of $\alpha$-invariant isomorphism classes of $2$-dimensional  $\GF(p)$-representations of $P$. As $|P|$ is coprime to $p$, all these representations are semisimple. If we are not in the case $P=C_4$ and $\delta=0$, then $\exp(P)$ divides $p-1$, and therefore every irreducible $P$-representation is a $1$-dimensional character. There are $|P|$ such characters, and  the $2$-dimensional representations of $P$ correspond to the multisets $\{\chi,\chi'\}$ of two characters; such a multiset is $\alpha$-invariant if and only if $\chi\mapsto\chi\circ\alpha$ either fixes both $\chi$ and $\chi'$, or interchanges them. Thus, if this map fixes exactly $f$ characters and interchanges $s$ pairs of characters, then $\rho(P,\alpha)=\tbinom{f}{2}+s+f$. For $\alpha={\rm id}$ we have $f=|P|$ and $s=0$, so $\rho(P,{\rm id})=\binom{|P|+1}{2}$, which is $1$, $3$, $10$ and $10$ for $P=1$, $C_2$, $C_4$ with $\delta=1$, and $C_2^2$, respectively. The other values are determined by the following observations. If $P=C_2^2$, then a transposition has $f=2$ and $s=1$, and a $3$-cycle has $f=1$ and $s=0$. If $P=C_4$ and $\delta=1$, then inversion has $f=2$ and $s=1$. For $P=C_4$ and $\delta=0$, the claimed values follow because $P$ has two characters with values in $\{\pm1\}$ and a unique $\GL_2(p)$-class of irreducible $2$-dimensional representations that maps $P$ to $\Sigma(p,4)$.
\end{proof} 

\begin{table}\footnotesize  
\renewcommand{\arraystretch}{1.2}  
\[\begin{array}{@{}l|l|ccccc@{}}
P&\alpha&\varphi_a&\varphi_{a+\bar a}&\varphi_{a+\bar a}^-&\rho\\\hline\hline
1&{\rm id}&1&1&1&1\\
C_2&{\rm id}&2&2&2&3\\
C_4&{\rm id}&2+2\delta&4&2&4+6\delta\\
C_4&\text{inversion}&2&2&4&4\\
C_2^2&{\rm id}&4&4&4&10\\
C_2^2&\text{transposition}&2&2&2&4\\
C_2^2&\text{$3$-cycle}&1&1&1&1
\end{array}\] 
\caption{The functions $\varphi_c$, $\varphi_c^-$ and $\rho$ of Definition {\ref{def_table2stuff}}.}\label{tabPhiS}
\end{table} 

\enlargethispage{0.4cm}

\begin{lemma}\label{lemColS}We continue with the previous notation; recall the functions $\varphi_c$, $\varphi_c^-$, and $\rho$ and the integers $a=v_2(p-1)$, $\bar a=v_2(p+1)$, and $\delta=\Delta_{4\mid (p-1)}$  for an odd prime $p$. Let $U\in\mathcal{U}(p,e)$, let $P$ be a group of order dividing $4$, and let $\xi\in\Theta(U)$, $\alpha\in\Aut(P)$, and $\psi\in\Hom(P,\Theta(U))$.
\begin{ithm}
\item  If $\psi\ne1$, then $T(U;P,\xi,\alpha,\psi)=0$ unless $U$ has type {\rm(4)} or {\rm(5)} and $\psi\circ\alpha=\psi$, in which case
\[T(U;P,\xi,\alpha,\psi)=\big|\Hom(\ker\psi,C_{2^a})\big|,\]
which equals $1$ if $|P|=2$ and equals $2$ if $|P|=4$.
\item If $\psi=1$, then, depending on the type {\rm (1), \ldots, (5)} of $U$, we have
\[T(U;P,\xi,\alpha,1)=\begin{cases}
\varphi_a(P,\alpha)&{\rm (1)},\\
\rho(P,\alpha)&{\rm (2)},\\
\varphi_a(P,\alpha)^2&{\rm (3)},\\
\varphi_a(P,\alpha)^2\ \text{ if $\xi=1$;\quad} \varphi_a(P,\alpha^2)\ \text{ if $\xi\ne1$}&{\rm (4)},\\
\varphi_{a+\bar a}(P,\alpha)\ \text{ if $\xi=1$ or $\delta=1$;\quad}\varphi^-_{a+\bar a}(P,\alpha)\ \text{ if $\xi\ne1$ and $\delta=0$}&{\rm (5)}
\end{cases}\]
which can be evaluated via Lemma \ref{lemTabPhiValues} and Table \ref{tabPhiS}. 
\end{ithm} 
\end{lemma}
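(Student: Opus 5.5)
We will prove Lemma~\ref{lemColS} by a case analysis over the column types of Definition~\ref{defTypes}, always working with the explicit pair $C\le N$ listed there. Throughout we use two facts: $\epsilon_U\circ\phi$ only depends on the $C$-class of $\phi$, and $\phi^\xi$ is well defined up to $C$-conjugacy (Remark~\ref{rem_classes}).

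\textbf{Part a).} If $\psi\ne1$, then $\Theta(U)\ne1$, so $U$ has type (4) or (5). As in the proof of Lemma~\ref{lemHcountS}, the condition $\phi^\xi\circ\alpha=\phi$ forces $\psi\circ\alpha=\psi$, because $\Theta(U)$ is abelian.

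So assume $\psi\circ\alpha=\psi$. Then $\ker\psi$ has index $2$ in $P$, and $P=\langle y\rangle\ker\psi$ for some $y$ with $\psi(y)\ne1$. A homomorphism $\phi$ is determined by $x=\phi(y)\in N\setminus C$ and by $\phi|_{\ker\psi}\colon\ker\psi\to C$, subject to the relations of $P$.

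\emph{Step 1 (the image of $y$).} In the proofs of Lemma~\ref{lemAtabS} and Lemma~\ref{lemE4}, the elements $x\in N\setminus C$ of order dividing $|y|$ form few $C$-classes. We fix one class representative $x$.

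\emph{Step 2 (the remaining freedom).} The $C$-conjugates of $\phi$ that keep $\phi(y)=x$ are conjugates by $C_C(x)$. For type (4) this centraliser consists of scalars; for type (5) it is $\{s: s^{p-1}=1\}$, which is the scalars $\GF(p)^\times$. Since $\phi|_{\ker\psi}$ must commute with $x$ (when $P$ is abelian), its image lies in $C_C(x)\cong C_{p-1}$. Scalars act trivially by conjugation, so no further identifications occur.

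\emph{Step 3 (counting).} For $|P|=2$ we have $\ker\psi=1$, and there is a unique class, giving the value $1$. For $|P|=4$ we need to show that exactly $2=|\Hom(C_2,C_{2^a})|$ classes survive. If $P=C_2^2$, then $\ker\psi\cong C_2$ maps to $\pm1$. If $P=C_4$, then $x^2$ is determined by $\phi(y^2)$, and we check that the relevant $x$ with $x^2=\pm1$ give two classes.

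\emph{Step 4 (the fixed-point condition).} Finally we verify that $\phi^\xi\circ\alpha=\phi$ holds automatically. The map $\alpha$ fixes $\psi$ and acts on the scalar part trivially or by a permutation preserving classes, and $\xi$ preserves the single class of each $x$ (as in Lemma~\ref{lemAtabS}). \textbf{This verification is the delicate point}, and I expect it to be the main obstacle, especially for $P=C_4$ with $\alpha$ the inversion.

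\textbf{Part b).} If $\psi=1$, then $\phi$ maps into $C$. For types (1), (3), (4), (5), $C$ is abelian, so $C$-classes are singletons.

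\emph{Types (1) and (3).} Here $C=C_{p-1}$ or $C_{p-1}^2$, and $\Hom(P,C)$ equals $\Hom(P,C_{2^a})$ or its square. The condition $\phi\circ\alpha=\phi$ then gives $\varphi_a$ or $\varphi_a^2$.

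\emph{Type (2).} Here $N=C=\GL_2(p)$, and the count is $\rho(P,\alpha)$ by definition.

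\emph{Type (4), $\xi\ne1$.} The swap sends $\phi=(\phi_1,\phi_2)$ to $(\phi_2,\phi_1)$. The condition becomes $\phi_2\circ\alpha=\phi_1$ and $\phi_1\circ\alpha=\phi_2$. So $\phi_1$ is free subject to $\phi_1\circ\alpha^2=\phi_1$, which gives $\varphi_a(P,\alpha^2)$.

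\emph{Type (5).} Here $C=\Sigma(p)$ has Sylow $2$-subgroup $C_{2^{a+\bar a}}$, and $\xi$ acts on it as $s\mapsto s^p$. If $\delta=0$, then $p\equiv-1$ modulo $2^{a+\bar a}$ up to the relevant exponent, so $\xi$ acts as inversion on the image of $P$. If $\delta=1$, then $\bar a=1$, so $s^p=s$ on elements of order dividing $4$, and $\xi$ acts trivially on the image of $P$. This yields $\varphi_{a+\bar a}$ and $\varphi^-_{a+\bar a}$ respectively.
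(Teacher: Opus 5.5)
Your split of part a) into the $C$-class of $x=\phi(y)$ plus the restriction to $P_0=\ker\psi$ is workable, and your counts $1$ and $2$ are correct. The gap is Step 4, which you flag as the main obstacle and leave unproved; the sentence you offer does not address the actual difficulty.

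The problem is that $\alpha$ need not fix $y$. The relevant transposition of $C_2^2$ sends $y\mapsto yz$ with $\ker\psi=\langle z\rangle$, and inversion of $C_4$ sends $y\mapsto y\cdot y^2$. So $(\phi^\xi\circ\alpha)(y)$ is obtained from $x\phi(z)$, with $z\in P_0$, by conjugating with a preimage of $\xi$, and $x\phi(z)$ may differ from $x$ by the scalar $-1$. Moreover, for $P=C_4$ there are two classes of $x$, so the involution argument of Lemma~\ref{lemAtabS} does not cover it.

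The missing observation is what your Steps 1--3 in fact establish: \emph{the $C$-class of $\phi$ is determined by $\phi|_{P_0}$}. This holds because:
\begin{itemize}
\item the class of $x$ is determined by the scalar $x^2=\phi(y^2)$ (a single class of involutions in $N\setminus C$, and for $P=C_4$ one class for each value $x^2=\pm1$);
\item the remaining freedom $C_C(x)$ consists of central scalars.
\end{itemize}
Once this is stated, Step 4 is immediate. Put $\phi'=\phi^\xi\circ\alpha$. Since $\Theta(U)$ is abelian, $\epsilon_U\circ\phi'=\psi\circ\alpha=\psi$. Also $\phi'|_{P_0}=\phi|_{P_0}$, because $\alpha|_{P_0}=\mathrm{id}$ (as $|P_0|\le 2$) and a preimage of $\xi$ centralises the scalars $\phi(P_0)$. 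Hence $\phi'$ is $C$-conjugate to $\phi$, and the fixed-point condition excludes nothing.

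This is the paper's argument, except that the paper proves $\phi\mapsto\phi|_{P_0}$ is a bijection onto $\Hom(P_0,C^\sigma)$ uniformly in $P$. It uses that $s\mapsto ss^\sigma$ is a surjection $C\to C^\sigma$ with kernel $\{s^\sigma s^{-1}: s\in C\}$, rather than case-by-case class counts.

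Three smaller points:
\begin{itemize}
\item For $P=C_4$ you need \emph{exactly} two classes of $x\in N\setminus C$ with $x^2=\pm1$, whereas Lemma~\ref{lemE4} only states ``at most two''. This follows because $C$-conjugation preserves the scalar $x^2$ and is transitive on each fibre, and $x^2=-1$ is attained. In type (4), $x=J\diag(u,v)$ with $uv=\pm1$. In type (5), $x=s\theta$ with $s^{p+1}=\pm1$, and both the fibres and the $C$-orbits are cosets of the subgroup of order $p+1$ of $\Sigma(p)$.
\item In part b), type (5), the congruence $p\equiv-1$ modulo $2^{a+\bar a}$ is false, since $v_2(p+1)=\bar a$. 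Only $p\equiv-1\bmod 4$ is needed.
\item Your final ``respectively'' is reversed: $\delta=0$ (inversion) gives $\varphi^-_{a+\bar a}(P,\alpha)$, and $\delta=1$ or $\xi=1$ gives $\varphi_{a+\bar a}(P,\alpha)$.
\end{itemize}
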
 
\begin{proof}
  Throughout, $\phi\colon P\to N$ is a homomorphism and $T(U;P,\xi,\alpha,\psi)$ is the number of $C$-classes of those $\phi$ with $\epsilon_U\circ\phi=\psi$ and $\phi^\xi\circ\alpha=\phi$; recall that the latter is an equation of $C$-classes, see Remark \ref{rem_classes}. Since $C=\ker\epsilon_U$ and $\epsilon_U\circ\phi=\psi$, we have  $\phi(\ker\psi)\leq C$; moreover, $\phi(P)\leq C$ holds if and only if $\epsilon_U\circ\phi$ is trivial, that is, if and only if $\psi=1$. 

\begin{iprf}
\item Let $\psi\ne1$. This requires $|\Theta(U)|=2$, so $U$  has type (4) or (5). Write $N=\langle\sigma\rangle\ltimes C$ with $\sigma\in\{J,\theta\}$. Since $\Theta(U)$ is abelian,  $\epsilon_U\circ\phi^\xi=\epsilon_U\circ\phi$, and the condition $\phi^\xi\circ\alpha=\phi$ forces $\psi\circ\alpha=\psi$. Since $|\Theta(U)|=2$ and $\psi$ is nontrivial, $P_0=\ker\psi$ has index $2$ in $P$, and $\psi\circ\alpha=\psi$ implies  $\alpha(P_0)=P_0$. We now fix $u\in P\setminus P_0$. As mentioned above, $\phi(P_0)\leq C$, and  $\phi(u)$ centralises $\phi(P_0)$ since $P$ is abelian. Since $\phi(u)\in C\sigma$ and $C$ is abelian,  the image of the restriction $\phi_0=\phi|_{P_0}$ lies in  $C^\sigma$, the group of scalar matrices. Writing $\phi(u)=x\sigma$ with $x\in C$, we have $\phi(u^2)=\phi(u)^2=xx^\sigma$ with $u^2\in P_0$.

 We show that $\phi\mapsto\phi_0$ is a bijection from the $C$-classes of  homomorphisms $\phi\colon P\to N$ with $\epsilon_U\circ\phi=\psi$ onto $\Hom(P_0,C^\sigma)$. Note first that $C$-conjugate homomorphisms have the same restriction to $P_0$, since $\phi(P_0)\leq C^\sigma\leq C$ and $C$ is abelian; thus $\phi\mapsto\phi_0$ is well defined on $C$-classes. We claim that $f\colon C\to C^\sigma$, $ s\mapsto ss^\sigma$, is a surjective homomorphism whose kernel is the image of $s\mapsto s^\sigma s^{-1}$. If $\sigma=J$, then $f(\diag(x,y))=\diag(xy,xy)$ and the claim holds. If $\sigma=\theta$, then $\sigma$ acts on $C=\Sigma(p)$ by $s\mapsto s^p$, see Definition \ref{defTypes}(5), so that $f(s)=ss^\sigma=s^{1+p}$. Since $C\cong C_{p^2-1}$, it follows that $f$ is surjective and $\ker f$ is the  image of $s\mapsto s^\sigma s^{-1}=s^{p-1}$. Thus, the claim on $f$ holds. Now let $\sigma\in\{J,\theta\}$ be arbitrary again, and consider a given homomorphism $\beta_0\colon P_0\to C^\sigma$ with  $t=\beta_0(u^2)\in C^\sigma$. Since $f$ is surjective, there is $x\in C$ with $xx^\sigma=t$, and we can define $\beta\colon P\to N$ by $\beta(c)=\beta_0(c)$ and $\beta(uc)=x\sigma\beta_0(c)$ for $c\in P_0$. Since $\beta_0(P_0)\leq C^\sigma$ is centralised by $\sigma$ and by $x\in C$, and since $\beta(u)^2=(x\sigma)^2=xx^\sigma=t=\beta_0(u^2)$, this $\beta$ is a homomorphism with $\beta|_{P_0}=\beta_0$.  Moreover, $\beta(P_0)\leq C^\sigma \leq \ker\epsilon_U$ and $\beta(u)=x\sigma\in N\setminus C$, so $\epsilon_U\circ\beta$ and $\psi$ are homomorphisms $P\to\Theta(U)$ with kernel $P_0$; since $|\Theta(U)|=2$, we have $\epsilon_U\circ\beta=\psi$. A different choice of $x$ replaces $x$ by $xk$ for some $k\in\ker f$. Thus, as shown above,  $k=h^\sigma h^{-1}$ for some $h\in C$, and then the resulting homomorphism is $C$-conjugate to $\beta$. Conversely, every $\beta'$ with $\epsilon_U\circ\beta'=\psi$ and $\beta'|_{P_0}=\beta_0$ satisfies $\beta'(u)=x'\sigma$ for some $x'\in C$ with $x'x'^\sigma=\beta'(u^2)=t$, so it arises from this construction. The claim follows; in particular, $C$-conjugacy of the homomorphisms we seek can be determined by checking equality of their restrictions to $P_0$.

 Since $\xi$ acts either trivially or as $\sigma$, and since $\sigma$ centralises $C^\sigma$, it follows that $\phi^\xi$ and $\phi$ have the same restriction to $P_0$, hence they are $C$-conjugate. Moreover, $\alpha$ restricts to the identity on $P_0$ since $|P_0|\leq 2$, and so $\phi^\xi\circ\alpha$ and $\phi$ have the same restriction to $P_0$. It follows that we always have $\phi^\xi\circ\alpha=\phi$ as $C$-classes. Thus, the condition $\phi^\xi\circ\alpha=\phi$ excludes no class, and so $T(U;P,\xi,\alpha,\psi)$ is just the number of $C$-classes of homomorphisms $\phi$ with $\epsilon_U\circ\phi=\psi$, that is, $T(U;P,\xi,\alpha,\psi)=|\Hom(P_0,C^\sigma)|=|\Hom(P_0,C_{2^a})|$ by the previous paragraph; the last equality holds because $C^\sigma\cong C_{p-1}$ and $P_0$ is a $2$-group. The last claim of a) follows since $P_0$ has index $2$ in $P$ and $|P|\leq 4$. 

\item Let $\psi=1$, so $\phi(P)\leq C$. If $U$ has type (2), then $N=C=\GL_2(p)$, so $\xi=1$ and $T(U;P,\xi,\alpha,1)=\rho(P,\alpha)$ by the definition of $\rho$. For types (1), (3), (4), and (5), the group $C$ is abelian, so $\Hom(P,C)/C=\Hom(P,C)$. The Sylow $2$-subgroups of $C=\GL_1(p)$, $D(p)$, $D(p)$, and $\Sigma(p)$ are $S=C_{2^a}$, $C_{2^a}\times C_{2^a}$, $C_{2^a}\times C_{2^a}$, and $C_{2^{a+\bar a}}$, respectively. Note that $\phi(P)\leq S$, so $\phi\in\Hom(P,S)$.

First let $\xi=1$. This holds for types (1), (3) since they have $\Theta(U)=1$. A preimage of $\xi$ lies in $C$, so the condition $\phi^\xi\circ\alpha=\phi$ becomes $\phi\circ\alpha=\phi$. Recall that $\Hom(P,A\times B)=\Hom(P,A)\times\Hom(P,B)$ for any groups $A,B$, and $\Aut(P)$ acts diagonally. Thus, the number of $\phi\in\Hom(P,S)$ with $\phi\circ\alpha=\phi$ is  $\varphi_a(P,\alpha)$, $\varphi_a(P,\alpha)^2$, $\varphi_a(P,\alpha)^2$, and $\varphi_{a+\bar a}(P,\alpha)$ for types (1), (3), (4), and (5), respectively.

Now consider $\xi\ne1$, and type (4) or (5). For type (4) a preimage of $\xi$ acts on $C=D(p)$ as $J$, hence on $\Hom(P,C_{2^a})^2$ by swapping  the two factors. Thus, $\phi^\xi\circ\alpha=\phi$ if and only if  $\phi=(\phi_1,\phi_2)$ with $\phi_1=\phi_2\circ\alpha$ and $\phi_2=\phi_1\circ\alpha$, and then $\phi$ is determined by any $\phi_1$ with $\phi_1\circ\alpha^2=\phi_1$. Hence there are $\varphi_a(P,\alpha^2)$ such $\phi$. For type (5) a preimage of $\xi$ acts on $C=\Sigma(p)$ as $\theta\colon s\mapsto s^p$. As $\exp(P)$ divides $4$, only the action on the elements of order dividing $4$ is relevant, and there $\theta$ is the identity if $p\equiv1\bmod4$ and inversion if $p\equiv3\bmod4$. Thus, the condition $\phi^\xi\circ\alpha=\phi$ is $\phi\circ\alpha=\phi$ if $\delta=1$, and $\phi\circ\alpha=\phi^{-1}$ otherwise; the claim follows.
\end{iprf}
\end{proof}  

\newpage
\section{Notation}\label{secNot}

For convenience we collect the notation used throughout this paper; here $p$ and $q$ denote primes and $e\in\{1,2\}$. In Section~\ref{secEven}, the sets of Section~\ref{secOdd} are used with $\nu_0$ in place of $\nu$, see Section~\ref{secStrat}.

\vspace*{0.5cm}
  
{\footnotesize
\raggedright
\renewcommand{\arraystretch}{1.1}
\setlength{\LTpre}{0pt}\setlength{\LTpost}{0pt}
\begin{longtable}{@{}p{3.3cm}p{11.9cm}@{}}
  \multicolumn{2}{@{}l@{}}{\bf General notation}\\*[2pt]
 $C_n$ & cyclic group of order $n$ (Section~\ref{sec_prelim})\\
$\gnu(n)$ & number of groups of order $n$, up to isomorphism (Definition~\ref{def_setup}a)\\
$\gnuFF(n)$, $\gnuFF(n,\ell)$ & the same for solvable Frattini-free groups, with socle order $\ell$ (Definition~\ref{def_setup}b,c)\\
$\mathcal{Q}(n)$ & product of all primes $p$ with $p^2\mid n$ (Definition~\ref{def_setup}d)\\
$\mathcal{S}(n)$ & orders of the possible simple direct factors (Definition~\ref{def_setup}e), including $1$\\
$\mathcal{A}(n)$ & isomorphism types of abelian groups of order $n$ (Definition~\ref{def_setup}f)\\
$\Delta_P$ & generalised Kronecker delta: $1$ if $P$ holds, and $0$ otherwise (Section~\ref{sec_prelim})\\
$G_p$, $\rk(G)$ & Sylow $p$-subgroup of $G$; minimal size of a generating set (Section~\ref{sec_prelim})\\
$v_q$ & $q$-adic valuation (Notation~\ref{notSetup}c)\\[7pt]
\multicolumn{2}{@{}l@{}}{\bf Socle, columns, and the odd case}\\*[2pt]
$\ell=p_1^{e_1}\ldots p_m^{e_m}$ & socle order; each $e_i\in\{1,2\}$ (Notation~\ref{notSetup}a)\\
$S$, $\Aut(S)$ & socle $\prod_iC_{p_i}^{e_i}$, and $\Aut(S)=\prod_i\GL_{e_i}(p_i)$ (Notation~\ref{notSetup}b)\\
$\pi_i$, $K_i$ & projection onto the $i$-th column of $\Aut(S)$, and $K_i=\pi_i(K)$ (Notation~\ref{notSetup}b)\\
$\nu$, $\nu_0$, $t$ & $\nu=n/\ell=2^t\nu_0$ with $\nu_0$ odd (Notations~\ref{notSetup}c and~\ref{notEvenN}a)\\
$\mathcal{K}$ & the solvable $K\leq\Aut(S)$ of order $\nu$ (Notation~\ref{notSetup}d)\\
$C(p,b)$ & subgroup of order $b$ of $\GL_1(p)$ (Definition~\ref{defCan}a)\\
$D(p)$, $J$, $M(p)$ & diagonal, swap, and monomial matrices in $\GL_2(p)$ (Definition~\ref{defCan}b)\\
$\Sigma(p)$, $\Sigma(p,b)$ & standard Singer cycle, and its subgroup of order $b$ (Definition~\ref{defCan}c)\\
$\mathcal{U}(p,e)$, $\mathcal{U}(p,e,L)$ & canonical representatives (Definition~\ref{def_Upc}); those that are quotients of $L$ \eqref{eq_defUpel}\\
$\Theta(U)$, $\theta_U$ & $N_{\GL_e(p)}(U)/C_{\GL_e(p)}(U)$, of order at most $2$ (Lemma~\ref{lemT})\\
$\mathcal{U}=(U_1,\ldots,U_m)$ & projection tuple; note the overload with $\mathcal{U}(p,e)$ (Section~\ref{secReduce})\\
$\Theta(\mathcal{U})$ & $\Theta(U_1)\times\ldots\times\Theta(U_m)$, an elementary abelian $2$-group \eqref{def_ThetaU}\\
$\mathcal{K}(\mathcal{U})$, $\mathcal{K}(\mathcal{U},L)$ & the $K\in\mathcal{K}$ with $\pi_i(K)=U_i$; those with $K\cong L$ (Section~\ref{secReduce}, Corollary~\ref{corRed})\\
$\mathcal{K}_q(\mathcal{U},L)$ & subdirect products of the $(U_i)_q$ isomorphic to $L_q$ \eqref{eq_defKq}\\
$j_i$, $d_i$, $d_i^\pm$ & $v_q(|X_i|)$, $\rk(X_i)$, $\rk(X_i^\pm)$ for $X_i=(U_i)_q$ (Section~\ref{secStep1}, Lemma~\ref{lemFix})\\
$g$, $c$, $r$, $z$ & auxiliary counting functions on $\{0,1,2\}$ (Section~\ref{secStep1})\\
$\Lambda^\pm$, $\Lambda^{(\chi)}$ & counts attached to the eigenspace ranks (Lemmas~\ref{lemFix} and~\ref{lemFixHS})\\[7pt]
\multicolumn{2}{@{}l@{}}{\bf The even case and the technical arguments}\\*[2pt]
$N_i$, $Z_i$, $\epsilon_{U}$ & normaliser and centraliser of $U_i$; the map $N_i\to\Theta(U_i)$ (Notation~\ref{notEvenN}c)\\
$N(\mathcal{U})$, $C(\mathcal{U})$, $\epsilon$ & $\prod_iN_i$, $\prod_iZ_i$, and $\epsilon\colon N(\mathcal{U})\to\Theta(\mathcal{U})$ (Notation~\ref{notEvenN}d)\\
$E$, $O$ & Sylow $2$-subgroup and normal Hall $2'$-subgroup of $K=E\ltimes O$ (Lemma~\ref{lem_kHall})\\
$\mathcal{P}$ & pairs $(E,O)$ with $|E|=2^t$ and $|O|=\nu_0$ (before Proposition~\ref{propPairs})\\
$\mathcal{E}_t(\mathcal{U})$ & $C(\mathcal{U})$-classes of subgroups of order $2^t$ in $N(\mathcal{U})$ \eqref{eq_mcE}\\
$\mathcal{M}(\mathcal{U},L)$ & the pairs in $\mathcal{P}$ compatible with $\mathcal{U}$ and $L$ (before Proposition~\ref{propRedS})\\
column type ${\rm(0)}$--${\rm(5)}$ & one of six cases for $U$, $C$, and $N$ (Definition~\ref{defTypes})\\
$\mathcal{K}(\mathcal{U},L)^H$ & the $H$-invariant elements of $\mathcal{K}(\mathcal{U},L)$ (after Definition~\ref{defTypes}; Lemma~\ref{lemFixHS})\\
$\mathcal{I}(U)$ & $C$-classes of elements of $N$ of order dividing $2$ (Section~\ref{secT1})\\
$A(U;\xi,\psi)$ & counting function for the case $t=1$ (Section~\ref{secT1}, Table~\ref{tab_Afirst})\\
$\mathcal{H}(E)$ & tuples of $Z_i$-classes of maps $E\to N_i$ with trivial common kernel (before \eqref{eq_newpsi})\\
$A(\mathcal{U};E,\xi,\alpha,\psi)$ & counting function for the case $t=2$ \eqref{eq_newA}\\
$T(U;P,\xi,\alpha,\psi)$ & columnwise version of $A(\mathcal{U};E,\xi,\alpha,\psi)$ \eqref{eq_defT}\\
$\chi$, $W_\chi$, $\mathcal{Y}$, $d_i^{(\chi)}$ & data for the statement and proof of Lemma~\ref{lemFixHS}\\
$\mu$ & M\"obius function of the subgroup lattice of $E$ (Lemma~\ref{lemHcountS})\\
$\varphi_c$, $\varphi_c^-$, $\rho$ & auxiliary homomorphism and representation counts (Definition~\ref{def_table2stuff}, Table~\ref{tabPhiS})\\
$a$, $\bar a$, $\delta$ & $v_2(p-1)$, $v_2(p+1)$, and $\Delta_{4\mid(p-1)}$ (Definition~\ref{def_table2stuff})
\end{longtable}
}

\newpage
 
{\bf Data availability}\\
The only data connected to this publication is a GAP implementation that has resulted from the research developed in this publication. This implementation is made available via a public GitHub repository, see the citation in the text. There is no further data connected to this publication.

\medskip

    {\bf Conflict of interest}\\
    The authors state  that there is no conflict of interest.
    
 \medskip

\enlargethispage{1cm}
\bibliographystyle{abbrv}

\end{document}